\numberwithin{equation}{section}
\newtheorem{theorem}{Theorem}[section]
\newtheorem{proposition}[theorem]{Proposition}
\newtheorem{corollary}[theorem]{Corollary}
\newtheorem{example}[theorem]{Example}
\newtheorem{remark}[theorem]{Remark}
\newtheorem{lemma}[theorem]{Lemma}
\newtheorem{question}[theorem]{Question}
\newtheorem{definition}[theorem]{Definition}
\DeclareMathOperator{\ima}{Im}
\begin{document}

{\it This is an Accepted Manuscript of an article published by Taylor \& Francis Group in Africa Review on 20/03/2019, available online:\\ 
\href{https://www.tandfonline.com/doi/abs/10.2989/16073606.2019.1581298}{https://www.tandfonline.com/doi/abs/10.2989/16073606.2019.1581298} 

\vspace{.5cm}
DOI: 10.2989/16073606.2019.1581298}

\vspace{1cm}

\title[Separation axioms and dimension of asymmetric  spaces]{Separation axioms and covering dimension of asymmetric normed spaces}

\author{Victor Donju\'an and Natalia Jonard-P\'erez}

\subjclass[2010]{22A30, 46A19,   52A21, 54D10 , 54F45, 54H11, 	}

\keywords{asymmetric norm, right bounded, covering dimension, separation axioms, paratopological group}

\thanks{The first author has been supported by Conacyt grant 464720 (M\'exico). The second author has been supported by Conacyt grant 252849 (M\'exico) and by  PAPIIT grant IA104816  (UNAM, M\'exico).}

\address{Departamento de  Matem\'aticas,
Facultad de Ciencias, Universidad Nacional Aut\'onoma de M\'exico, 04510 Ciudad de M\'exico, M\'exico.}

\email{(N.\,Jonard-P\'erez) nat@ciencias.unam.mx}
\email{(V. Donju\'an) donjuan@ciencias.unam.mx}
  


\begin{abstract}

It is well known that every asymmetric normed space is a $T_0$ paratopological group. Since  all $T_i$ axioms ($i=0, 1, 2, 3$) are pairwise non-equivalent in the class of paratopological groups, it is natural to ask if some of these axioms are equivalent in the class of asymmetric normed spaces. 
In this paper, we will consider this question. We will also show some topological properties of asymmetric normed spaces that are closely related with the axioms $T_1$ and $T_2$ (among others). In particular, we will make a remark on \cite[Theorem 13]{luis}, which states that every $T_1$ asymmetric normed space with compact closed unit ball must be finite-dimensional (as a vector space). We will show that when the asymmetric normed space is finite-dimensional, the topological structure and the covering dimension of the space can be described in terms of certain algebraic properties. In particular, we will characterize the covering dimension of every finite-dimensional asymmetric normed space. 
 \end{abstract}

\maketitle

\section{Introduction}

Let $X$ be a real linear space and $\mathbb{R}^+$ be the set of nonnegative real numbers. An {\it asymmetric norm} $q$ on $X$ is a function $q:X\to\mathbb{R}^+$ satisfying the following conditions 

\begin{enumerate}
\item $q(ax)=aq(x)$,
\item $q(x+y)\le q(x)+q(y)$,
\item $q(x)=q(-x)=0$ implies $x=0$,
\end{enumerate}
for every $x,y\in X$ and $a\in\mathbb{R}^+$. The pair $(X,q)$ is called an {\it asymmetric normed space}.

Any asymmetric norm induces an asymmetric topology on $X$ that is generated by the asymmetric open balls $B_q(x,\varepsilon)=\{y\in X\mid q(y-x)<\varepsilon\}.$ This topology is always a $T_0$ topology on $X$ for which the vector sum on $X$  is continuous. 
Therefore,  if $(X,q)$ is an asymmetric normed space, then $(X,+)$ is a group such that the addition function is continuous. Such groups are called {\it paratopological groups}. However, in general this topology is not Hausdorff and the map $x\mapsto -x$ is not always continuous, thus $(X,q)$ fails to be a topological group. 

Let us remember that a topological group is a group endowed with a topology such that the operation and the inverse functions are continuous. It is known that every $T_0$ topological group is completely regular (see, e.g., \cite[Chapter II, Theorem 8.4]{Hewitt Ross}). Since every topological vector space is a topological group, we also have that every $T_0$ topological vector space  is completely regular (\cite[Theorem 2.2.14]{Megginson}). In the case of paratopological groups, the separation axioms $T_i$ ($i=0,1,2,3$) are pairwise non-equivalent. However, some non trivial implications may occur. For example,  T. Banakh and A. Ravsky proved in \cite{ravs} that each $T_3$ paratopological group is $T_{3\frac{1}{2}}$.

In the case of asymmetric normed spaces, some equivalences between separation axioms can occur too. 
Indeed, if $(X,q)$ is a  $T_1$  finite-dimensional asymmetric normed space then it is a normable space (and therefore it is also $T_2$). This is a strong result, proved in \cite[Corollary 11]{luis}, that we will use often.

Recall that $q^s:X\to \mathbb R^+$ denotes the norm defined through the formula
$$q^s(x)=\max\{q(x), q(-x)\}$$
(see Section~\S 2 for more details about $q^s$).

\begin{proposition}\label{norm}
Let $(X,q)$ be a finite-dimensional asymmetric normed space satisfying the axiom $T_1$. Then the norm $q^s$ induces the same topology on $X$ than that induced by $q$. In this case we say that the asymmetric normed space $(X,q)$ is normable by $q^s$.
\end{proposition}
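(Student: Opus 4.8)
The plan is to deduce this from the cited result \cite[Corollary 11]{luis}, which asserts that a $T_1$ finite-dimensional asymmetric normed space is normable, and then identify the norm that does the job as $q^s$. First I would recall the elementary inequality $q(x)\le q^s(x)$, valid for every $x\in X$ by definition of $q^s$. This immediately gives that every $q^s$-open ball contains a $q$-open ball of the same radius centered at the same point, so the $q^s$-topology is finer than (or equal to) the $q$-topology; equivalently, the identity map $(X,q^s)\to(X,q)$ is continuous. The content of the proposition is therefore the reverse inclusion: the $q$-topology is finer than the $q^s$-topology.

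For the reverse direction I would invoke \cite[Corollary 11]{luis} to obtain \emph{some} norm $\|\cdot\|$ on $X$ inducing the $q$-topology. Since $(X,\|\cdot\|)$ is then a finite-dimensional normed space, its topology is the unique linear Hausdorff topology on $X$; in particular the asymmetric topology of $q$ is a linear topology. Now $q$ is continuous as a map $(X,q)\to\mathbb R$ at the point $0$ (continuity of $q$ from the right is automatic, and since the topology is linear, one gets full continuity at $0$, hence everywhere by subadditivity). The key step is to show that $q(-x)$ is also continuous at $0$ in the $q$-topology: because the topology of $q$ agrees with that of the norm $\|\cdot\|$, it is symmetric under $x\mapsto -x$, so $x\mapsto q(-x)$ is continuous as well. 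Consequently $q^s=\max\{q(x),q(-x)\}$ is continuous on $(X,q)$, and since $q^s(x)=0\iff x=0$ (using axiom (3) of an asymmetric norm), boundedness of $q^s$ on a $q$-neighborhood of $0$ yields that each $q^s$-ball is a $q$-neighborhood of its center. This gives the remaining inclusion of topologies.

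Alternatively, and perhaps more cleanly, once we know from \cite[Corollary 11]{luis} that the $q$-topology is a linear (hence Hausdorff, hence, in finite dimension, the standard Euclidean) topology $\tau$, I would argue directly: the unit sphere $S=\{x: q^s(x)=1\}$ is $\tau$-compact (it is closed and bounded in the Euclidean structure, using that $q^s$ is a genuine norm and all norms on a finite-dimensional space are equivalent in the Euclidean sense — but here one must be careful to only use that $q^s$ is a norm and that $\tau$ is the standard topology, which follows once we know $q$ induces a linear topology). Continuity of $q$ with respect to $\tau$ then gives $0<m\le q\le M<\infty$ on $S$ for constants $m,M$; the lower bound $m>0$ is where we use the Hausdorff/$T_1$ hypothesis, since it prevents $q$ from vanishing on $S$. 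Homogeneity upgrades this to $m\,q^s(x)\le q(x)\le M\,q^s(x)$ for all $x$, and these two-sided bounds show $q$ and $q^s$ induce the same topology.

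The main obstacle I anticipate is the lower bound $q(x)\ge m\,q^s(x)$: the inequality $q\le q^s$ and continuity of $q$ are cheap, but deducing that $q$ does not degenerate requires genuinely using that the space is $T_1$ (equivalently, via \cite[Corollary 11]{luis}, normable). Without $T_1$ the conclusion fails — e.g. $q(x)=\max\{x_1,0\}$ on $\mathbb R^2$ is an asymmetric norm whose topology is strictly coarser than that of $q^s$ — so the proof must route through the cited normability result rather than being a purely formal manipulation of $q$ and $q^s$.
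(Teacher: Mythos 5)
Your argument is correct, but note that the paper does not actually prove Proposition~\ref{norm}: it is presented as a direct consequence of \cite[Corollary 11]{luis}, which is exactly the route of your first paragraph. Your second, ``alternative'' argument is in fact cleaner and makes the detour through the cited corollary unnecessary: since $q^s$ is a genuine norm on a finite-dimensional space, its topology is automatically the Euclidean one, the sphere $S=\{x:q^s(x)=1\}$ is $q^s$-compact, and $q$ is automatically $q^s$-Lipschitz (from $q(x)\le q(x-y)+q(y)$ one gets $|q(x)-q(y)|\le q^s(x-y)$); the axiom $T_1$ together with Proposition~\ref{haus}(1) gives $q>0$ on $S$, compactness gives $m:=\min_S q>0$, and homogeneity yields $m\,q^s\le q\le q^s$. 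At no point do you need to know that the $q$-topology is linear or that $x\mapsto -x$ is $q$-continuous, so you can drop the appeal to \cite[Corollary 11]{luis} entirely; what the self-contained route buys is precisely that this proposition then \emph{implies} the cited normability result rather than depending on it. One small erratum in your closing remark: $q(x)=\max\{x_1,0\}$ on $\mathbb R^2$ is not an asymmetric norm, since $q(0,1)=q(0,-1)=0$ violates axiom (3) in the definition; to illustrate the failure without $T_1$ use $q(x)=x^+$ on $\mathbb R$, or $q(x,y)=\max\{x^+,|y|\}$ on $\mathbb R^2$ as in the paper's own examples.
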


Another interesting result related to the axiom $T_1$ was stated in \cite[Theorem 13]{luis}:

\begin{theorem}\label{finite}
The closed unit  ball $B_q[0,1]$ of a $T_1$ asymmetric normed space $(X,q)$ is $q$-compact if and only if $X$ is a  finite-dimensional linear space. 
\end{theorem}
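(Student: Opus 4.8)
The plan is to prove the two implications separately: the "if" direction using Proposition~\ref{norm} together with the classical Heine--Borel property, and the "only if" direction via a Riesz-type argument carefully adapted to the asymmetric setting.

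For the "if" direction, suppose $X$ is finite-dimensional. By Proposition~\ref{norm} the $q$-topology coincides with the norm topology of $q^s$, so $q$-compactness is the same as $q^s$-compactness. From the triangle inequality one gets $|q(x)-q(y)|\le q^s(x-y)$, so $q$ is $q^s$-continuous and $B_q[0,1]=q^{-1}([0,1])$ is $q^s$-closed; by Heine--Borel it then remains to check that it is $q^s$-bounded. I would argue that $q$ attains a \emph{positive} minimum $m$ on the ($q^s$-compact) $q^s$-unit sphere: if the minimum were $0$, the minimizer would be a nonzero vector $v$ with $q(v)=0$, contradicting the $T_1$ axiom, which for an asymmetric norm just says $q(v)=0\Rightarrow v=0$. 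Positive homogeneity then gives $q(x)\ge m\,q^s(x)$ for all $x$, whence $B_q[0,1]\subseteq\{x\mid q^s(x)\le 1/m\}$ is $q^s$-bounded, hence $q^s$-compact and therefore $q$-compact.

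For the "only if" direction, assume $B_q[0,1]$ is $q$-compact. The key preliminary step is that every finite-dimensional subspace $Z\subseteq X$ is $q$-closed: since $X$ is $T_1$ so is $(Z,q|_Z)$ (its zero set is that of $q$ intersected with $Z$, hence trivial), and one checks that the subspace topology on $Z$ is exactly the $q|_Z$-topology since $B_q(a,\varepsilon)\cap Z=B_{q|_Z}(a,\varepsilon)$; so by Proposition~\ref{norm} the subspace topology on $Z$ agrees with the norm topology of $q^s|_Z$. Applying this to $\operatorname{span}(Z\cup\{x\})$ for a $q$-limit $x$ of a sequence in $Z$ then reduces the claim to the fact that finite-dimensional subspaces of a normed space are closed. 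Now I would cover $B_q[0,1]$ by the $q$-open sets $x+B_q(0,1/2)$ with $x\in B_q[0,1]$ (each point lies in its own set), extract a finite subcover centred at $x_1,\dots,x_n$, and set $Y=\operatorname{span}\{x_1,\dots,x_n\}$, a $q$-closed subspace. Supposing $Y\ne X$, pick $z\notin Y$; since $X\setminus Y$ is $q$-open, some ball $B_q(z,\varepsilon)$ is disjoint from $Y$, which means $d:=\inf_{y\in Y}q(y-z)\ge\varepsilon>0$. Choosing $y_0\in Y$ with $0<q(y_0-z)<2d$ and putting $x=(y_0-z)/q(y_0-z)$, one gets $q(x)=1$, so $x\in B_q[0,1]$, while for every $y\in Y$
$$q(x-y)=\frac{q\bigl((y_0-q(y_0-z)y)-z\bigr)}{q(y_0-z)}\ge\frac{d}{q(y_0-z)}>\frac12 ,$$
because $y_0-q(y_0-z)y\in Y$. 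In particular $q(x-x_i)>1/2$ for all $i$, contradicting that $x$ lies in some $x_i+B_q(0,1/2)$. Hence $X=Y$ is finite-dimensional.

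The main obstacle is the asymmetry, which makes the direction of every inequality significant: the fact that $Y$ is $q$-closed controls $\inf_{y\in Y}q(y-z)$ rather than $\inf_{y\in Y}q(z-y)$, which is why the normalized vector must be taken as $y_0-z$ and not $z-y_0$. The hypothesis $T_1$ is used in an essential way twice, both times through Proposition~\ref{norm}: to force $B_q[0,1]$ to be $q^s$-bounded in the finite-dimensional case, and to make finite-dimensional subspaces $q$-closed in the converse. A secondary point to get right is precisely the identification of the subspace topology on a finite-dimensional $Z$ with the one induced by $q|_Z$, so that Proposition~\ref{norm} is applicable there.
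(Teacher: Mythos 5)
Your proof is correct, but the `only if' direction is genuinely different from the paper's. The paper (Corollary~\ref{c: correccion luis}) argues via the sphere $S_q=\{x:q(x)=1\}$: assuming $B_q[0,1]$ is not $q^s$-bounded it extracts convergent sequences $(y_n)$ and $(-y_n)$ from the compact sets $B_q[0,1]$ and $S_q$, uses $T_1$ to identify the two limits as $\pm y$, concludes that $B_q[0,1]$ contains the ray $\{ty:t\ge0\}$, and invokes Lemma~\ref{rayo} to get a contradiction; this establishes $q^s$-boundedness of the ball, hence normability by Lemma~\ref{acota}, and finite-dimensionality then follows from local compactness of the resulting normed space. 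You instead first prove that every finite-dimensional subspace of a $T_1$ asymmetric normed space is $q$-closed (a correct and reusable observation, obtained by applying Proposition~\ref{norm} to $\operatorname{span}(Z\cup\{x\})$), and then run a Riesz-lemma argument against a finite subcover by translates of $B_q(0,1/2)$; your care with the direction of the inequalities (normalizing $y_0-z$ rather than $z-y_0$, so that the controlled quantity is $\inf_{y\in Y}q(y-z)$) is exactly what makes the asymmetric Riesz lemma go through. The trade-off: the paper's route yields the stronger intermediate conclusions that $B_q[0,1]$ is $q^s$-bounded and that $X$ is normable by $q^s$ (statements (1) and (2) of Corollary~\ref{c: correccion luis}), which are of independent interest, whereas your route reaches finite-dimensionality more directly and elementarily, with those extra facts recoverable only a posteriori from Proposition~\ref{norm}. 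Your explicit proof of the `if' direction (the minimum of $q$ on the $q^s$-sphere is positive by $T_1$, hence $q\ge m\,q^s$ and Heine--Borel applies) is also fine; the paper omits this half, attributing it to the original reference.
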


However, we noticed that there is a gap in the `only if' part of the proof of this theorem. Indeed, the author of \cite{luis} used Proposition \ref{norm} to prove the `only if' part of Theorem \ref{finite}. But that can only be done provided that the space $X$ is finite-dimensional, which is actually what he wanted to conclude.

Nonetheless, Theorem~\ref{finite} still holds and we will give a correct proof in Section~\S 3 (Corollary~\ref{c: correccion luis}). In the same section, we explore other topological properties related to the axiom $T_1$, such as the topology of some quotient spaces.

Axiom $T_2$ was explored in \cite{hous}. In that paper, the authors characterized when an asymmetric normed space is Hausdorff in terms of the  seminorm  $\|\cdot\|_q$ defined in equation~(\ref{e:seminorm}) in Section 2 (see Proposition~\ref{haus}). In the same paper, the authors also characterized when  an asymmetric normed space  is homeomorphic to the product of a Hausdorff space times the kernel of the seminorm defined in (\ref{e:seminorm}).  Inspired by this result, in Section \S 4 we show that a similar decomposition can always be given. 
In particular, we will show that if $(X,q)$ is a finite-dimensional right bounded asymmetric normed space, then $X$ is homeomorphic to the product of a Euclidean space times the linear span of the kernel of $q$ (see Corollary~\ref{linh}).
 
 Among all the literature about asymmetric normed spaces, finite-dimensional ones are a good source of results, examples and counterexamples. Here, finite-dimensional means that the space is a finite-dimensional vector space. Namely it has a finite linearly independent spanning  set. 
If $X$ is a topological vector space,  the algebraic dimension coincides with its topological dimension (covering dimension). However, in the non symmetric case this is no longer true. In Section \S 5, we investigate the topological dimension of several asymmetric normed spaces. The main result of this section characterizes the covering dimension of every finite-dimensional asymmetric normed space (see Theorem~\ref{t:main dimension}). As a corollary of this, we get that the algebraic and covering dimension of a finite-dimensional asymmetric normed space  coincide if and only if the space is $T_1$. In other cases, the covering dimension is $0$ (and therefore the space satisfies the $T_4$ axiom) or $\infty$ (depending on whether the kernel of $q$ spans   the whole space or not).  

Finally, in Section \S 6 we investigate what other separation axioms are satisfied by the asymmetric normed spaces and we show some counterexamples.
Particularly we prove that every asymmetric space $(X,q)$ with $B_q[0,1]$ $q$-closed satisfying the axiom $T_3$ is completely regular.

\section{Preliminaries}

Given an asymmetric normed space $(X,q)$,  and $\epsilon>0$, let
\begin{align*}
&B_q(x,\epsilon)=\{y\in X:q(y-x)<\epsilon\},\\
&B_q[x,\epsilon]=\{y\in X:q(y-x)\le\epsilon\}.
\end{align*}
We call $B_q(x,\epsilon)$ and $B_q[x,\epsilon]$ the {\it open ball} and the {\it  closed ball} centered at $x$ of radius $\epsilon$, respectively. We endow $(X,q)$ with the topology generated by all open balls $B_q(x,\epsilon)$ ($x\in X,\epsilon >0$). With this topology, the sum function $X\times X\to X$ given by $(x,y)\mapsto x+y$ is continuous, but the inverse function $X\to X$ given by $x\mapsto -x$ is not always continuous. The multiplication by a fixed positive number, as well as the translations are homeomorphisms (hence, these spaces are homogeneous). 

Let $q^-:X\to\mathbb{R}^+$ be defined by $q^-(x)=q(-x)$. Then $q^-$ is also an asymmetric norm on $X$. If for every $x\in X$ we define 
$$q^s(x):=\max\{q(x),q^-(x)\}=\max\{q(x), q(-x)\},$$ then $q^s:X\to\mathbb{R}^+$ is a norm on $X$ and $(X,q^s)$ is the normed space associated with the asymmetric normed space $(X,q)$.

In certain cases, we will need to use the topology generated by the norm $q^{s}$, i.e., the topology  determined by the sets 
$$B_{q^{s}}(x,\varepsilon)=\{y\in X : q^s(y-x)<\varepsilon\}$$ with $x\in X$ and $\varepsilon >0$. For this reason, in order to avoid any confusion, it is important to distinguish between both topologies  at the moment of dealing with them.  Therefore,
we will say that a set $A\subset X$ is $q$-compact ($q^{s}$-compact) if it is compact in the topology generated by $q$ ($q^s$). We will similarly define the notion of $q$-open, $q$-closed, $q$-continuous ($q^{s}$-open, $q^{s}$-closed, $q^{s}$-continuous), etc.


It is easy to check that $(X,q)$ is a first-countable space such that the sequence $(x_n)$ converges to  $x\in X$ if and only if $$\lim_{n\to\infty}q(x_n-x)=0.$$

Since $q(x)\le q^s(x)$ for every $x\in X$, if a sequence $(x_n)$ converges to $x$ in the normed space $(X,q^s)$ then the same holds in $(X,q)$. Also, $$B_{q^s}(x,\epsilon)=B_{q}(x,\epsilon)\cap B_{q^-}(x,\epsilon).$$ 
This implies that the topology of $(X,q^s)$ is finer than both topologies induced by $q$ and $q^{-1}$, respectively. 
 Additionally, the asymmetric norm $q:(X,q^s)\to\mathbb{R}^+$ is always  continuous, and therefore the closed balls $B_q[x,\epsilon]$ are $q^s$-closed. However, in general the closed balls are not $q$-closed (see Proposition~\ref{prop bola cerrada regular}).  

For every asymmetric normed space $(X,q)$, let us consider the map $\|\cdot\|_q:X\to \mathbb R^+$ defined by
\begin{equation}\label{e:seminorm}
\left\|{x}\right\|_q=\inf\{q(y)+q(y-x):y\in X\}.
\end{equation}

It was proved in \cite{hous} that $\left\|{x}\right\|_q$ is the greatest (symmetric) seminorm on $X$ such that $\left\|{x}\right\|_q\le q(x)$ for every $x\in X$. There, the following result was stated.

\begin{proposition}\label{haus}For any asymmetric normed space $(X,q)$ we have:
\begin{enumerate}[\rm(1)]
\item $X$ is $T_1$ if and only if $q(x)>0$ for every $x\in X\setminus\{0\}$.
\item $X$ is $T_2$ if and only if $\left\|{x}\right\|_q>0$ for every $x\in X\setminus\{0\}$.
\end{enumerate}
\end{proposition}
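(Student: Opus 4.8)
The plan is to prove the two equivalences separately, using homogeneity of $(X,q)$ for part (1) and the comparison between the $q$-topology and the topology induced by the seminorm $\|\cdot\|_q$ for part (2).

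For part (1), since translations are homeomorphisms of $(X,q)$, every singleton $\{x\}=x+\{0\}$ is $q$-closed if and only if $\{0\}$ is, so $X$ is $T_1$ exactly when $\{0\}$ is $q$-closed. I would compute the $q$-closure of $\{0\}$ directly from the basic neighborhoods: a point $y$ lies in $\overline{\{0\}}$ if and only if $0\in B_q(y,\varepsilon)$ for every $\varepsilon>0$, that is, if and only if $q(-y)<\varepsilon$ for all $\varepsilon>0$, i.e. $q(-y)=0$. Hence $\overline{\{0\}}=\{y\in X:q(-y)=0\}$, and (replacing $y$ by $-x$) this set reduces to $\{0\}$ precisely when $q(x)>0$ for every $x\in X\setminus\{0\}$, which gives the equivalence.

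For part (2), the ``if'' direction uses that $\|\cdot\|_q$ is a seminorm with $\|x\|_q\le q(x)$: if $\|x\|_q>0$ for every $x\neq 0$, then $\|\cdot\|_q$ is a genuine norm, and the inequality $\|y-x\|_q\le q(y-x)$ gives $B_q(x,\varepsilon)\subseteq B_{\|\cdot\|_q}(x,\varepsilon)$, so the $q$-topology is finer than the (Hausdorff, metrizable) topology induced by $\|\cdot\|_q$; a topology finer than a Hausdorff one is Hausdorff, hence $X$ is $T_2$. For the ``only if'' direction I would argue by contraposition: if some $x_0\neq 0$ satisfies $\|x_0\|_q=0$, then by the defining infimum in~(\ref{e:seminorm}) there is a sequence $(y_n)$ with $q(y_n)+q(y_n-x_0)\to 0$, so $q(y_n)\to 0$ and $q(y_n-x_0)\to 0$; by the description of convergence in $(X,q)$ recalled in Section~\S 2, this means $(y_n)$ converges simultaneously to $0$ and to $x_0$. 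Since $x_0\neq 0$ and a sequence in a Hausdorff space has at most one limit, $X$ is not $T_2$.

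I do not expect a serious obstacle here; the only point needing a little care is the ``only if'' direction of (2), whose key observation is that a vanishing value $\|x_0\|_q=0$ is exactly what produces a sequence with two distinct limits. Everything else is bookkeeping with the definitions of the balls together with the already-quoted fact that $\|\cdot\|_q$ is the greatest seminorm dominated by $q$.
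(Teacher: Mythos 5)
Your proof is correct and complete. The paper does not prove this proposition itself but quotes it from the reference \cite{hous}; your argument is the standard one for both parts --- identifying $\overline{\{0\}}$ with $-\theta_q$ via homogeneity for (1), and for (2) combining the comparison of the $q$-topology with the (metrizable) topology of the norm $\|\cdot\|_q$ with the observation that $\|x_0\|_q=0$ produces a sequence converging simultaneously to $0$ and to $x_0$ --- and all the steps check out against the definitions in Section~2.
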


For any asymmetric normed space $(X,q)$, we consider the sets
$$\theta_q=\{x\in X:q(x)=0\},$$
$$\ker\left\|{\cdot}\right\|_q=\{x\in X: \left\|{x}\right\|_q=0\}.$$

It is easy to see that $\theta_q$ is a convex cone (i.e., it is closed under sums and positive scalar multiplication), and $\ker\left\|{\cdot}\right\|_q$ is  a linear subspace of $X$. Thus from  Proposition \ref{haus} we get that
\begin{itemize}
\item [(A1)] $\theta_q=\{0\}$ if and only if $X$ is $T_1$.
\item [(A2)] $\ker\left\|{\cdot}\right\|_q=\{0\}$ if and only if $X$ is $T_2$.

\end{itemize}

There is some disagreement in the mathematical literature about the definition of the separation axioms $T_3$, $T_{3\frac{1}{2}}$ and $T_4$. To avoid any confusion we will follow \cite{Hewitt Ross} (see also \cite{counterexamples}). Namely, a \textit{regular} space will be a space satisfying the axioms $T_1$ and $T_3$. Analogously by a  \textit{completely regular}   (\textit{normal}) space we mean a  space satisfying the axioms $T_1$ and $T_{3\frac{1}{2}}$ ($T_1$ and $T_{4}$, respectively). It's worth noting that those definitions may not be the most commonly used (see, for example  \cite{engel}).

It is widely known that any topological space satisfying the axioms $T_{3}$ and $T_0$ is a Hausdorff space (hence, regular). For the sake of completeness we include a short proof of this statement.

\begin{proposition}\label{prop cregular}
Let $X$ be a topological space satisfying the axioms $T_{3}$ and $T_0$. Then $X$ is $T_2$.
\end{proposition}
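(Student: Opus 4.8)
The plan is to derive the Hausdorff property directly from the $T_3$ and $T_0$ axioms, using the standard ``shrinking'' behavior of regular spaces. Recall that $T_3$ here means: for every point $x$ and every closed set $C$ with $x\notin C$, there are disjoint open sets separating $x$ and $C$; equivalently (and this is the form I will actually use), every point has a neighborhood base of closed neighborhoods. The axiom $T_0$ says that for any two distinct points, at least one of them has an open neighborhood missing the other.

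First I would take two distinct points $x,y\in X$ and, using $T_0$, assume without loss of generality that there is an open set $U$ with $x\in U$ and $y\notin U$. Then $X\setminus U$ is a closed set containing $y$ but not $x$. Applying $T_3$ to the point $x$ and the closed set $X\setminus U$, I obtain disjoint open sets $V,W$ with $x\in V$, $X\setminus U\subseteq W$, and $V\cap W=\varnothing$. Since $y\in X\setminus U\subseteq W$, the sets $V$ and $W$ are the required disjoint open neighborhoods of $x$ and $y$ respectively. This shows $X$ is $T_2$.

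There is essentially no obstacle here; the only thing to be careful about is to use the precise form of $T_3$ adopted in the paper (following \cite{Hewitt Ross}) and to make sure the $T_0$ step is invoked correctly, i.e., that we are allowed to assume the separating neighborhood is around $x$ rather than $y$ (which is exactly what ``at least one of the two points'' in the $T_0$ definition grants, after possibly swapping the names of $x$ and $y$). Once the closed set $X\setminus U$ is produced, the separation is immediate from $T_3$, and the resulting open sets are automatically disjoint, so no further shrinking or intersection arguments are needed.
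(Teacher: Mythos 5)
Your argument is correct and is essentially identical to the paper's proof: both use $T_0$ to produce an open set $U$ containing $x$ but not $y$, then apply $T_3$ to $x$ and the closed set $X\setminus U$ to obtain the disjoint separating neighborhoods. No issues.
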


\begin{proof}
Let $x,y\in X$ with $x\neq y$. Since $X$ is $T_0$, without loss of generality there exists an open set $U$ such that $x\in U$ and $y\notin U$. Using the fact that $X$ is $T_3$, we can find two disjoint open sets, $V$ and $W$ such that $x\in V$ and $X\setminus U\subset W$. Since $y\in X\setminus U$, we can conclude that $V$ and $W$ separate $x$ and $y$. 
\end{proof}

Since every asymmetric normed space is $T_0$, from Proposition \ref{prop cregular} we directly infer the following.
\begin{corollary}\label{corol t3 implica t2} Every $T_3$ asymmetric normed space is Hausdorff.
\end{corollary}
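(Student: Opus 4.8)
The plan is simply to specialize Proposition~\ref{prop cregular} to the asymmetric setting, so the argument reduces to a single observation once we recall why an asymmetric normed space always satisfies $T_0$. Given a $T_3$ asymmetric normed space $(X,q)$, I would first note that its ball topology is $T_0$, as recorded in Sections~\S1 and~\S2. This holds unconditionally: if $x\neq y$, then putting $z=x-y\neq 0$, axiom~(3) of an asymmetric norm forces $q(z)>0$ or $q(-z)>0$, i.e. $q(x-y)>0$ or $q(y-x)>0$; in the first case $B_q(y,q(x-y))$ contains $y$ but not $x$, and in the second case $B_q(x,q(y-x))$ contains $x$ but not $y$. Hence $(X,q)$ satisfies both $T_0$ and $T_3$, and Proposition~\ref{prop cregular} immediately yields that $(X,q)$ is $T_2$, that is, Hausdorff.

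There is essentially no obstacle here, since all of the mathematical content is carried by Proposition~\ref{prop cregular}; the only point worth emphasising is that the $T_0$ property of the asymmetric ball topology is genuinely automatic — it presupposes neither $T_1$, nor normability, nor any boundedness hypothesis on the closed unit ball — so the corollary is a clean, hypothesis-free consequence. I would also add as a closing remark that, once $T_2$ is established, the space is in particular $T_1$, so under the conventions fixed in Section~\S2 (where ``regular'' means $T_1$ together with $T_3$) every $T_3$ asymmetric normed space is in fact regular; this is the natural companion statement and requires no additional work.
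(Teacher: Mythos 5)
Your proof is correct and follows exactly the paper's route: observe that every asymmetric normed space is $T_0$ (which you verify directly from axiom (3) of an asymmetric norm) and then invoke Proposition~\ref{prop cregular}. The only difference is that you spell out the $T_0$ verification, which the paper takes as known.
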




On an asymmetric normed space, there is a simple condition which implies the separation axiom $T_3$.

\begin{proposition}\label{prop bola cerrada regular}
Let $(X,q)$ be an asymmetric normed space. If the closed ball $B_q[0,1]$ is a $q$-closed set in $X$, then $X$ is $T_3$.
\end{proposition}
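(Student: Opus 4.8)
The plan is to show directly that $X$ satisfies $T_3$: given a point and a $q$-open set containing it, I need to separate them by $q$-open sets. Since translations are homeomorphisms, it suffices to work at the origin. So let $U$ be a $q$-open neighborhood of $0$; I want a $q$-open set $V$ with $0 \in V$ and $V$ whose $q$-closure is contained in $U$ — equivalently, I want to sandwich a $q$-closed set between $V$ and $U$. The natural candidate is a scaled closed ball: pick $\varepsilon > 0$ with $B_q(0,\varepsilon) \subset U$, and try $V = B_q(0, \varepsilon/2)$, hoping that its closure sits inside $B_q[0,\varepsilon/2] \subset B_q(0,\varepsilon) \subset U$.

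The key step is therefore to verify that $B_q[0, \varepsilon/2]$ is $q$-closed, and this is exactly where the hypothesis enters: $B_q[0,1]$ is assumed $q$-closed, and since multiplication by the positive scalar $\varepsilon/2$ is a homeomorphism of $(X,q)$ sending $B_q[0,1]$ onto $B_q[0,\varepsilon/2]$, the latter is $q$-closed as well. Then $\overline{B_q(0,\varepsilon/2)} \subset B_q[0,\varepsilon/2]$ because $B_q[0,\varepsilon/2]$ is a $q$-closed set containing $B_q(0,\varepsilon/2)$. Finally $B_q[0,\varepsilon/2] \subset B_q(0,\varepsilon)$ since $q(x) \le \varepsilon/2 < \varepsilon$, and $B_q(0,\varepsilon) \subset U$. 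Thus $V$ and $X \setminus \overline{V}$ are disjoint $q$-open sets separating $0$ from $X \setminus U$, which is the $T_3$ condition at the origin; translating gives it at every point.

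The only real subtlety — the step I would be most careful about — is confirming that scalar multiplication by $\varepsilon/2$ is genuinely a $q$-homeomorphism carrying closed balls to closed balls; this is recorded in Section 2 (multiplication by a fixed positive number is a homeomorphism), and combined with the homogeneity (reduction to $0$ via translations) it makes the argument go through with no further obstacles. Everything else is the routine observation that a closed set containing an open set contains its closure.
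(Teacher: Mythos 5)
Your proof is correct and follows essentially the same route as the paper: show all closed balls are $q$-closed (via the scalar-multiplication homeomorphism), then sandwich $\overline{B_q(x,\varepsilon/2)}\subseteq B_q[x,\varepsilon/2]\subseteq B_q(x,\varepsilon)$ to get the $T_3$ neighborhood condition. No gaps.
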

\begin{proof}
If $B_q[0,1]$ is $q$-closed, it is easy to see that all closed balls are $q$-closed too. Let $x\in X$ and $\epsilon>0$.  Then clearly $$x\in B_q(x,\epsilon/2)\subseteq{\overline{B_q(x,\epsilon/2)}}\subseteq B_q[x,\varepsilon/2]\subseteq B_q(x,\epsilon).$$
Since $B_q(x,\epsilon)$ was an arbitrary open basic neighborhood of $x$, this shows that $X$ is $T_3$.
\end{proof}

The following lemma will be used several times. The reader can find its proof in  \cite[Lemma 3.6]{Jonard Sanchez}. 

\begin{lemma}\label{rayo}
Let $(X,q)$ be an asymmetric normed space and let $x,y\in X$. If an open ball $B_q(z,\epsilon)$ contains the ray $\{tx+y:t\ge 0\}$ then $x\in\theta_q$.
\end{lemma}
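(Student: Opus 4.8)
The plan is to argue by contradiction, exploiting the fact that an open ball has bounded "$q$-radius" in a precise sense while the ray $\{tx+y : t\ge 0\}$ moves off to infinity in the direction $x$ if $q(x)>0$. So suppose $x\notin\theta_q$, i.e.\ $q(x)>0$, and suppose the whole ray is contained in $B_q(z,\epsilon)$. By definition this means $q(tx+y-z)<\epsilon$ for every $t\ge 0$. First I would isolate the point $w:=y-z$, so the hypothesis reads $q(tx+w)<\epsilon$ for all $t\ge 0$.

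The key step is a triangle-inequality estimate that extracts a lower bound growing linearly in $t$. Using subadditivity and positive homogeneity (axioms (2) and (1) of an asymmetric norm), write $q(tx) = q\big((tx+w) + (-w)\big) \le q(tx+w) + q(-w)$, hence $t\,q(x) = q(tx) \le q(tx+w) + q(-w) < \epsilon + q(-w)$ for every $t\ge 0$. Since $q(x)>0$, the left-hand side is unbounded as $t\to\infty$, while the right-hand side $\epsilon + q(-w)$ is a fixed finite constant; choosing $t > (\epsilon + q(-w))/q(x)$ yields a contradiction. Therefore $q(x)=0$, i.e.\ $x\in\theta_q$, which is what we wanted. (If one prefers to avoid passing to $w$, the same computation goes through directly with $y-z$ in place of $w$ throughout.)

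I do not expect any genuine obstacle here: the only subtlety is making sure the triangle inequality is applied in the correct order, namely splitting $tx$ as $(tx+y-z)+(z-y)$ rather than the reverse, so that the "good" term $q(tx+y-z)$ appears and is controlled by $\epsilon$, and the leftover term $q(z-y)$ is a constant independent of $t$. Note that $q(z-y)=q^-(y-z)$ need not be controlled by $\epsilon$ — this is precisely why the asymmetry matters — but it does not need to be, since it is finite and fixed. One should also note that the argument uses only that $B_q(z,\epsilon)$ is an open ball of finite radius; no separation axiom on $X$ is invoked, consistent with the statement. This recovers \cite[Lemma 3.6]{Jonard Sanchez}.
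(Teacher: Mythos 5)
Your proof is correct and complete: the decomposition $tx=(tx+y-z)+(z-y)$ together with positive homogeneity gives $t\,q(x)<\epsilon+q(z-y)$ for all $t\ge 0$, which forces $q(x)=0$. The paper itself only cites \cite[Lemma 3.6]{Jonard Sanchez} for this lemma, and your elementary triangle-inequality argument is exactly the standard proof one expects there, so there is nothing to add.
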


From Lemma~\ref{rayo} we get that if the space is $T_1$, then the open (and closed) balls do not  contain non-trivial rays.


The statements of the following lemma will be useful. 

\begin{lemma}\label{suma}
Let $(X,q)$ be an asymmetric normed space.  
\begin{enumerate}[\rm(1)]
\item Let $x_1,\ldots,x_n\in\theta_q$ with $x_n\neq 0$. Then $x_1+\ldots+x_n\neq 0$.
\item For every $x,y\in X$, we have that $q(x+y)\ge q(x)-q(-y)$.
\end{enumerate}
\end{lemma}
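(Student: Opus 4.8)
The plan is to treat the two parts separately, each reducing to a one-line manipulation with the defining axioms of an asymmetric norm.

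For part (2), I would simply observe that $x = (x+y) + (-y)$ and apply subadditivity (axiom (2)): this gives $q(x) \le q(x+y) + q(-y)$, and rearranging yields $q(x+y) \ge q(x) - q(-y)$. Nothing more is needed here.

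For part (1), the natural approach is a proof by contradiction. Suppose $x_1,\ldots,x_n\in\theta_q$ with $x_n\neq 0$ but $x_1+\cdots+x_n=0$. Then $-x_n = x_1+\cdots+x_{n-1}$, so by subadditivity $q(-x_n)\le q(x_1)+\cdots+q(x_{n-1})=0$, since each $x_i\in\theta_q$. Hence $q(-x_n)=0$; and $q(x_n)=0$ because $x_n\in\theta_q$ as well. Axiom (3) of the asymmetric norm (the ``sharpness'' condition) then forces $x_n=0$, contradicting the hypothesis $x_n\neq 0$. I would phrase it this way rather than trying to induct on $n$, since the single grouping $x_1+\cdots+x_{n-1}$ versus $x_n$ already does all the work.

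I do not expect any serious obstacle: both statements are immediate consequences of axioms (2) and (3). The only point worth flagging is that part (1) genuinely relies on axiom (3), which is precisely the feature that separates asymmetric norms from arbitrary sublinear functionals; this is why a sum of elements of the convex cone $\theta_q$ can fail to be $0$ even though $\theta_q$ need not be a linear subspace.
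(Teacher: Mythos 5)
Your proof is correct and follows essentially the same route as the paper: part (2) is the identical rearrangement of the triangle inequality applied to $x=(x+y)+(-y)$, and part (1) is the same contradiction argument, with your direct appeal to subadditivity replacing the paper's equivalent citation that $\theta_q$ is closed under addition before invoking axiom (3).
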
 

\begin{proof}
Suppose that $x_1+\ldots+x_n=0 $, so $x_1+\ldots+x_{n-1}=-x_n$. Therefore $-x_n\in\theta_q$, because $\theta_q$ is closed under addition. From this we conclude that $q(x_n)=q(-x_n)=0$, contradicting the hypothesis  $x_n\neq 0$.

Now, let $x,y\in X$. By the triangular inequality, $$q(x)=q(x+y-y)\le q(x+y)+q(-y).$$ It follows then that $q(x)-q(-y)\le q(x+y)$.
\end{proof}

Given a vector space $X$ and $M\subset X$, we will denote by $\left <M\right>$ the linear span of $M$.

\section{$T_1$ separation axiom and quotient of  asymmetric normed spaces}

As a consequence of Proposition~\ref{norm}, we have the following lemma.

\begin{lemma}\label{zeta}
Let $(X,q)$ be an asymmetric normed space and let $Y=\left<{\theta_q}\right>$, the subspace spanned by $\theta_q$. If $Z$ is a subspace such that $Y\cap Z=\{0\}$, then the following statements hold:\
\begin{enumerate}[\rm(1)]
\item $Z$ is $T_1$.
\item If $Z$ is a  finite-dimensional linear space then $Z$ is normable by $q^s$.
\end{enumerate}

\end{lemma}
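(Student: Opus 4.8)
The plan is to derive both statements directly from the definition of $\theta_q$ and Proposition~\ref{norm}, using the hypothesis $Y\cap Z=\{0\}$ to eliminate all vectors on which $q$ vanishes. For item (1), I would argue by contradiction: by property (A1) of the excerpt, $Z$ fails $T_1$ precisely when there is a nonzero $z\in Z$ with $z\in\theta_{q|_Z}$, i.e.\ $q(z)=0$. But then $z\in\theta_q\subseteq Y$, so $z\in Y\cap Z=\{0\}$, contradicting $z\neq 0$. Hence $\theta_{q|_Z}=\{0\}$ and $Z$ is $T_1$. (One should note that the asymmetric norm on $Z$ is just the restriction $q|_Z$, and that the subspace topology on $Z$ inherited from $(X,q)$ coincides with the topology induced by $q|_Z$, since the basic open balls of $q|_Z$ are exactly the traces of the basic open balls of $q$; this makes the reduction to property (A1) legitimate.)

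For item (2), assume in addition that $Z$ is finite-dimensional. By item (1), $(Z,q|_Z)$ is a finite-dimensional asymmetric normed space satisfying $T_1$, so Proposition~\ref{norm} applies directly: the norm $(q|_Z)^s$ induces the same topology on $Z$ as $q|_Z$. It remains only to observe that $(q|_Z)^s = (q^s)|_Z$, which is immediate from the definition $q^s(x)=\max\{q(x),q(-x)\}$, so that $Z$ is normable by $q^s$ in the sense stated.

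I do not expect any serious obstacle here; the proof is essentially a matter of bookkeeping. The one point that deserves a careful sentence is the compatibility of topologies and norms under restriction to the subspace $Z$ — that the subspace topology agrees with the $q|_Z$-topology, and that $(q|_Z)^s$ is the restriction of $q^s$ — since the whole argument hinges on being able to invoke (A1) and Proposition~\ref{norm} for the \emph{restricted} structure. Once that is in place, statement (1) is a one-line contradiction using $\theta_q\subseteq Y$ and $Y\cap Z=\{0\}$, and statement (2) is an immediate application of Proposition~\ref{norm}.
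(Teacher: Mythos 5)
Your proposal is correct and follows essentially the same route as the paper: the hypothesis $Y\cap Z=\{0\}$ forces $q(z)>0$ for all nonzero $z\in Z$, so $Z$ is $T_1$ by the characterization in Proposition~\ref{haus} (equivalently, property (A1)), and then Proposition~\ref{norm} gives (2) in the finite-dimensional case. Your extra remarks on the compatibility of the subspace topology with $q|_Z$ and of $(q|_Z)^s$ with $(q^s)|_Z$ are sound bookkeeping that the paper leaves implicit.
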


\begin{proof}
(1) Since $Y\cap Z=\{0\}$, then $q(x)>0$ for every $x\in Z\setminus\{0\}$. By Proposition \ref{haus}, $(Z,q)$ is $T_1$.
If $Z$ is finite-dimensional, (2) follows directly from Proposition \ref{norm}. 
\end{proof}



We say that two asymmetric norms $q$ and $p$ on $X$ are {\it equivalent} if there exist two numbers $M,N>0$ such that for every $x\in X$, $$Mp(x)\le q(x)\le Np(x).$$ 

It follows that $p$ and $q$ are equivalent if and only if there are $M,N>0$ such that $$B_p(0,M)\subseteq B_q(0,1)\subseteq B_p(0,N).$$ 

Also, if $q$ and $p$ are equivalent then $(X,q)$ and $(X,p)$ have the same topology. Since we always have $B_{q^s}(0,1)\subseteq B_q(0,1)$, then in order for $q$ and $q^s$ to be equivalent it suffices that $B_q(0,1)$ is a $q^s$-bounded set. Clearly, these equivalences remain  true  if we use closed balls instead of open balls. 

\begin{lemma}\label{acota}
Let $(X,q)$ be an asymmetric normed space. If $B_q(0,1)$ (or $B_q[0,1]$) is a $q^s$-bounded set then $q$ and $q^s$ are equivalent. In particular $(X,q)$ is normable by $q^s$. 
\end{lemma}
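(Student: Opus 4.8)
The plan is to exploit the general fact already established in the excerpt that $B_{q^s}(0,1)\subseteq B_q(0,1)$ always holds, so that one only needs the reverse containment up to a scaling factor; the $q^s$-boundedness hypothesis is precisely what delivers this. First I would recall that $q^s$-bounded means there is a constant $N>0$ with $B_q(0,1)\subseteq B_{q^s}[0,N]$ (equivalently $B_q[0,1]\subseteq B_{q^s}[0,N]$ in the closed-ball version). Combined with the trivial inclusion $B_{q^s}(0,M)\subseteq B_q(0,M)$ for any $M$, rescaling gives $B_{q^s}(0,1/N)\subseteq B_q(0,1)\subseteq B_{q^s}[0,N]$, which by the criterion recorded just before the lemma statement (that $q$ and $p$ are equivalent iff $B_p(0,M)\subseteq B_q(0,1)\subseteq B_p(0,N)$ for some $M,N>0$) shows $q$ and $q^s$ are equivalent asymmetric norms.

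Next I would translate the equivalence of norms into the equality of topologies: since $q$ and $q^s$ are equivalent, the open balls of one are nested between scaled open balls of the other, so the two topologies have the same basic neighborhoods of $0$, and by homogeneity (translations are homeomorphisms in both topologies) the topologies coincide on all of $X$. This is exactly the assertion that $(X,q)$ is normable by $q^s$, matching the terminology introduced in Proposition~\ref{norm}. I would also note the parenthetical: the closed-ball hypothesis $B_q[0,1]$ being $q^s$-bounded implies $B_q(0,1)$ is $q^s$-bounded since $B_q(0,1)\subseteq B_q[0,1]$, so the two versions of the hypothesis are equivalent and it suffices to argue one of them.

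The main obstacle, such as it is, is bookkeeping rather than conceptual: one must be careful to distinguish strict versus non-strict inequalities when rescaling balls (passing between $B_{q^s}(0,\varepsilon)$, $B_{q^s}[0,\varepsilon]$ and the corresponding $q$-balls), and to invoke the homogeneity of the asymmetric topology correctly when going from ``same neighborhoods of $0$'' to ``same topology''. No deep input is needed here; in fact this lemma does not even use Proposition~\ref{norm} — it is a direct unwinding of the definition of equivalent asymmetric norms together with the inclusion $B_{q^s}(0,1)\subseteq B_q(0,1)$. I would keep the write-up to a few lines.
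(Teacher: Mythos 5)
Your proposal is correct and follows essentially the same route as the paper, which in fact states Lemma~\ref{acota} without a separate proof because it is exactly the content of the discussion immediately preceding it: the trivial inclusion $B_{q^s}(0,1)\subseteq B_q(0,1)$ together with the $q^s$-boundedness of $B_q(0,1)$ gives the two-sided ball inclusion characterizing equivalence, and equivalent asymmetric norms induce the same topology.
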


Now we prove the `only if' part of Theorem \ref{finite}.

\begin{corollary}\label{c: correccion luis}
Let $(X,q)$ be a $T_1$ asymmetric normed space. If $B_q[0,1]$ is $q$-compact then 
\begin{enumerate}[\rm(1)]
\item $B_q[0,1]$ is $q^s$-bounded,
\item $X$ is normable by $q^s$ and
\item $X$ is finite-dimensional. 
\end{enumerate} 
\end{corollary}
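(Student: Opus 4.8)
The plan is to prove the three statements in the order (1), (2), (3), since each one feeds into the next.

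For (1): the key idea is that $q$-compactness of $B_q[0,1]$ forces it to be $q^s$-bounded. I would argue by contradiction. Suppose $B_q[0,1]$ is not $q^s$-bounded. Then there is a sequence $(x_n)$ in $B_q[0,1]$ with $q^s(x_n)\to\infty$; since $q(x_n)\le 1$ for all $n$, unboundedness of $q^s(x_n)=\max\{q(x_n),q(-x_n)\}$ must come from $q(-x_n)\to\infty$ (along a subsequence). By $q$-compactness, $(x_n)$ has a $q$-convergent subsequence, say $x_{n_k}\to x$ in the $q$-topology, i.e. $q(x_{n_k}-x)\to 0$. Now I would use part (2) of Lemma~\ref{suma}: $q(-x)=q(-x_{n_k}+(x_{n_k}-x))\le q(-x_{n_k})+q(x_{n_k}-x)$, which goes the wrong way; instead apply it as $q(-x_{n_k})=q\bigl((-x)+(x-x_{n_k})\bigr)\le q(-x)+q(x-x_{n_k})\le q(-x)+q^s(x_{n_k}-x)$. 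But $q^s(x_{n_k}-x)$ need not go to zero from $q$-convergence alone, so this is exactly where care is needed — see the obstacle paragraph below. The clean route is: since $B_q[0,1]$ is $q$-compact, it is $q$-bounded in a strong sense, and in fact every $q$-convergent sequence in it, together with the compactness, pins down $q(-x_{n_k})$; more precisely, I would cover $B_q[0,1]$ by finitely many balls $B_q(y_i,1)$, whence for each $n$ there is $i$ with $q(x_n-y_i)<1$, so $q(-x_n)\le q(-x_n-(-y_i))+q(-y_i)$... again the asymmetry bites. The honest approach, and I suspect the paper's, is to invoke $T_1$ and Lemma~\ref{rayo}: if $B_q[0,1]$ were not $q^s$-bounded, one extracts from it a sequence whose $q^-$-norms blow up, and using convexity of $B_q[0,1]$ and a rescaling/limiting argument one produces a nontrivial ray inside a $q$-open ball, contradicting $T_1$ via Lemma~\ref{rayo}.

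For (2): this is immediate from (1) together with Lemma~\ref{acota}: once $B_q[0,1]$ is $q^s$-bounded, $q$ and $q^s$ are equivalent, hence $(X,q)$ is normable by $q^s$.

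For (3): with $(X,q)$ normable by $q^s$, the $q$-topology equals the $q^s$-topology, so $B_q[0,1]$ being $q$-compact makes it $q^s$-compact; but $B_q[0,1]\supseteq B_{q^s}[0,1]$... wait — I need a compact neighborhood of $0$ in the normed space $(X,q^s)$. Since $q$ and $q^s$ are equivalent, $B_q[0,1]$ contains some $q^s$-ball $B_{q^s}[0,\delta]$, and $B_q[0,1]$ is $q^s$-compact (same topology), so $B_{q^s}[0,\delta]$ is a closed subset of a $q^s$-compact set, hence $q^s$-compact. Thus the normed space $(X,q^s)$ has a compact ball, and by the classical Riesz theorem it is finite-dimensional; therefore $X$ is finite-dimensional as a vector space.

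The main obstacle is step (1): controlling $q(-x_n)=q^-(x_n)$ using only $q$-compactness (which a priori only controls the $q$-topology, where the inversion map is not continuous). The asymmetry means one cannot naively pass $q^-$ through limits, and the triangle inequality points the wrong way. I expect the resolution to route through the $T_1$ hypothesis and Lemma~\ref{rayo} (no nontrivial rays in balls), extracting an unbounded direction from a would-be $q^s$-unbounded sequence in the convex compact set $B_q[0,1]$ and producing a forbidden ray; alternatively, one shows directly that a $q$-compact set is automatically $q^s$-bounded by a finite-subcover argument combined with $\theta_q=\{0\}$. Everything after (1) is routine given Lemma~\ref{acota} and Riesz's lemma.
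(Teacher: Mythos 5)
Parts (2) and (3) of your proposal are correct and match the paper: (2) is exactly Lemma~\ref{acota}, and (3) is the paper's argument (the topologies coincide, so a small $q^s$-closed ball sits inside the $q$-compact set $B_q[0,1]$ as a closed subset, hence is compact, and Riesz's theorem finishes). Your observation that the whole difficulty lives in (1), and that the resolution must go through $T_1$ and Lemma~\ref{rayo} via a rescaling that produces a forbidden ray, is also the right diagnosis.

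However, for (1) you have only a strategy, not a proof, and the two steps you leave as ``one produces a nontrivial ray'' are precisely the nontrivial ones. After setting $y_n=x_n/q(-x_n)$ (so $y_n\in B_q[0,1]$ and the candidate ray direction is a limit $y$ of the $y_n$), you must (a) show that the limit $y$ is \emph{nonzero} --- otherwise Lemma~\ref{rayo}, which concludes $y=0$, yields no contradiction --- and (b) upgrade the $q$-convergence $y_n\to y$ supplied by compactness to $q^s$-convergence, because the ray $\{ty:t\ge 0\}$ is obtained as a limit of points $ty_n\in B_q[0,1]$ and $B_q[0,1]$ is only known to be $q^s$-closed, not $q$-closed. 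Your own ``obstacle paragraph'' correctly identifies that $q$-compactness alone does not control $q^-$, but you never resolve it. The paper's resolution is the missing idea: the sphere $S_q=B_q[0,1]\setminus B_q(0,1)$ is $q$-closed in $B_q[0,1]$, hence $q$-compact, and $-y_n\in S_q$ since $q(-y_n)=1$; extracting limits $y_n\to y$ and $-y_n\to z\in S_q$ and using continuity of addition, the constant sequence $y_n+(-y_n)=0$ converges to $y+z\in\overline{\{0\}}=\{0\}$ (here $T_1$ enters), so $y=-z\neq 0$, which settles (a), and simultaneously $q(-y_n+y)=q(-y_n-z)\to 0$ gives $q^s(y_n-y)\to 0$, which settles (b). Without this device (or an equivalent one), your argument does not close; convexity of $B_q[0,1]$, which you invoke, plays no role in filling either hole.
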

\begin{proof}
Let us suppose that $B_q[0,1]$ is a $q$-compact set in the space $(X,q)$. Then the sphere $$S_q=B_q[0,1]\setminus B_q(0,1)=\{x\in X:q(x)=1\}$$ is  $q$-closed  in $B_q[0,1]$ (with respect to the subspace topology on $B_q[0,1]$). Since $B_q[0,1]$ is $q$-compact, we conclude that $S_q$ is also $q$-compact in $B_q[0,1]$, and therefore $S_q$ is $q$-compact in the whole space $X$. 

First, we will prove (1). Otherwise, if $B_q[0,1]$ is not $q^s$-bounded, there exists a sequence $(x_n)\subseteq B_q[0,1]$ such that $$\lim_{n\to\infty}q^s(x_n)=\infty.$$ Since $q(x_n)\le 1$, we can assume that $q(x_n)<q^s(x_n)$ and therefore $q^s(x_n)=q(-x_n)$ for all $n\in\mathbb{N}$. For each $n\in\mathbb{N}$ consider the point $$y_n=\frac{x_n}{q(-x_n)}.$$
Since $q(y_n)=q(-x_n)^{-1}q(x_n)\leq 1$, we have that $(y_n)\subset B_q[0,1]$. Thus, by 
the compactness of $B_q[0,1]$ we can assume that the sequence $(y_n)$ converges to some $y\in B_q[0,1]$. Furthermore, $q(-y_n)=1$ and then $(-y_n)\subseteq S_q$. Since $S_q$ is $q$-compact, we can suppose without loss of generality that $(-y_n)$ converges to some point $z\in S_q$ (in particular $z\neq 0$). By the continuity of the sum function, the trivial sequence $(y_n+(-y_n))$ converges to $y+z$. Hence $y+z\in{\overline{\{0\}}}^q=\{0\}$ because $X$ is $T_1$, so $y=-z\neq 0$. This shows that $(-y_n)$ converges to $-y$, and then $$\lim_{n\to\infty}q^s(y_n-y)=\lim_{n\to\infty}\max\{q(y_n-y),q(-y_n+y)\}=0.$$ 

This means that  $(y_n)$ converges to $y$ in the normed space $(X,q^s)$. Now, let $t$ be any nonnegative real number. Pick $n\in\mathbb{N}$ large enough such that 
$t/q(-x_n)<1$. Then, since $x_n\in B_q[0,1]$ we get that $q(ty_n)\le 1$, thus $ty_n\in B_q[0,1]$. So $ty\in B_q[0,1]$ because $B_q[0,1]$ is $q^s$-closed. By Lemma \ref{rayo} we must have $y=0$, a contradiction.
We conclude that $B_q[0,1]$ must be a $q^s$-bounded set.

 By Lemma \ref{acota} the spaces $(X,q)$ and $(X,q^s)$ have the same topology and therefore (2) holds. Finally, since the topology generated by $q^s$ coincides with the one generated by $q$, we thus have that the $q^s$-closed set $B_{q^s}[0,1]$  is also $q$-closed. Then $B_{q^s}[0,1]$ is a $q$-closed subset of the $q$-compact set $B_q[0,1]$ and therefore it is also $q$-compact. This shows that $X$ is  a locally compact normed space and so it must be finite-dimensional.
\end{proof}

\begin{remark}
Statement (3) can also be deduced from \cite[Proposition 2.4.14]{cobzas}, where the result is proved for asymmetric locally convex spaces. 
\end{remark}

\begin{remark}
Statement (1) in Corollary~\ref{c: correccion luis} does not hold without the separation axiom $T_1$. Indeed, consider the asymmetric normed space $(\mathbb R, q)$ where $q(x)=x^+$ for every $x\in \mathbb R$. In this case $q^s(x)=|x|$ and the unit ball $B_q[0,1]=(-\infty, 1]$ is $q$ compact but not $q^s$ bounded.   
\end{remark}

\subsection{Quotient subspaces}

Let $(X,q)$ be an asymmetric normed space. If $Y$ is a linear subspace of $X$, let us remember that the linear space $X/Y$ of all cosets $x+Y$ is called the quotient space of $X$ by $Y$. The asymmetric norm $q$ induces the function $w_Y:X/Y\to\mathbb{R}^+$ given by $$w_Y(x+Y)=\inf\{q(x+y):y\in Y\}.$$

Although $w_Y$ is nonnegative, positively homogeneous and satisfies the triangular inequality, it might not be an asymmetric norm. Rather, it is an asymmetric {\it seminorm} (see \cite[Chapter 1]{cobzas} for more information about asymmetric seminorms). 
We endow $X/Y$ with the topology induced by the asymmetric seminorm $w_Y$, in the same way we do for asymmetric norms.

If $q$ is a (symmetric) norm in $X$, then $w_Y$ is the usual quotient seminorm for linear subspaces. In this case, the topology generated by $w_Y$ coincides with the quotient topology induced by the canonical map $\pi:X\to X/Y$ (see e.g. \cite[Chapter III, Theorem 4.2]{Conway}).

In the asymmetric case, the same situation holds. Indeed, it is not difficult to prove that $\pi(B_q(x_0,\varepsilon))=B_{w_Y}(x_0+Y,\varepsilon)$ and therefore the map $\pi:(X,q)\to (X/Y,w_Y)$ is a continuous,  open and onto map (hence, it is an identification) (see also \cite[Proposition 3.1]{Carmen}).

 If $X$ is a normed space, it is well known that  $w_Y$ is a norm on $X/Y$ if and only if $Y$ is closed. In \cite[Proposition 8]{Valero} it is proved that if $Y$ is a $q$-closed linear subspace of the asymmetric normed space $(X,q)$, then $w_Y$ defines  an asymmetric norm on $X/Y$.
This condition is not necessary, as we can see in \cite[Proposition 3.2]{Carmen}, where the authors proved that $w_Y$ is an asymmetric norm on $X/Y$ if $Y$ is a $(q, q^{-1})$-closed subset (i.e., if $y, -y\in\overline{Y}$ then $y\in Y$).

On the other hand, the $q$-closedness of $Y$ characterizes the $T_1$ axiom on the quotient space This was proved in \cite[Proposition 3.3]{Carmen} and we enounce it in the following proposition.

 \begin{proposition}\label{coc1}
Let $(X,q)$ be an asymmetric normed space and $Y$ be a linear subspace of $X$. Then the quotient $X/Y$ is $T_1$ if and only if $Y$ is $q$-closed.
\end{proposition}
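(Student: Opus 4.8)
The statement to prove is Proposition~\ref{coc1}: the quotient $X/Y$ is $T_1$ if and only if $Y$ is $q$-closed. The plan is to use the characterization of $T_1$ for asymmetric normed (or seminormed) spaces coming from Proposition~\ref{haus}(1): a space with asymmetric seminorm is $T_1$ precisely when the seminorm is strictly positive off the origin. Here the relevant seminorm is $w_Y$ on $X/Y$, and the origin of $X/Y$ is the coset $Y$ itself. So the whole proof reduces to showing that $w_Y(x+Y)>0$ for every $x\notin Y$ if and only if $Y$ is $q$-closed.

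First I would unwind the definition: $w_Y(x+Y)=\inf\{q(x+y):y\in Y\}=\inf\{q(z):z\in x+Y\}$, i.e. $w_Y(x+Y)$ is the ``$q$-distance from $0$ to the affine subset $x+Y$'', equivalently $\inf\{q(z-x') : z\in Y\}$ after rewriting. The key observation is that $w_Y(x+Y)=0$ if and only if there is a sequence $y_n\in Y$ with $q(x+y_n)\to 0$, which means $-y_n\to x$ in the topology of $(X,q)$, i.e. $x\in\overline{Y}^{\,q}$ (using that $Y$ is a linear subspace, so $-y_n\in Y$ too). Hence the set of cosets on which $w_Y$ vanishes corresponds exactly to $\overline{Y}^{\,q}/Y$. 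Therefore $w_Y$ is strictly positive on $(X/Y)\setminus\{Y\}$ if and only if $\overline{Y}^{\,q}=Y$, that is, $Y$ is $q$-closed.

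To assemble this cleanly: for the ``if'' direction, assume $Y$ is $q$-closed and take $x+Y\neq Y$, so $x\notin Y=\overline{Y}^{\,q}$; then $x$ is not a $q$-limit of points of $Y$, so $\inf\{q(x+y):y\in Y\}=\inf\{q(x-(-y)):y\in Y\}>0$ since otherwise we would produce a sequence in $Y$ converging to $x$; thus $w_Y(x+Y)>0$ and Proposition~\ref{haus}(1) (applied to the asymmetric seminorm $w_Y$, whose induced topology is $T_0$ via the same argument, or directly via the identification $\pi$) gives that $X/Y$ is $T_1$. For the ``only if'' direction, suppose $Y$ is not $q$-closed and pick $x\in\overline{Y}^{\,q}\setminus Y$; then there exist $y_n\in Y$ with $q(x-y_n)\to 0$, hence $q(x+(-y_n))\to 0$ with $-y_n\in Y$, so $w_Y(x+Y)=0$ while $x+Y\neq Y$, and Proposition~\ref{haus}(1) shows $X/Y$ is not $T_1$.

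\textbf{Main obstacle.} The only subtlety I anticipate is making sure Proposition~\ref{haus}(1) legitimately applies to $X/Y$, since $w_Y$ is only an asymmetric \emph{seminorm}, not necessarily an asymmetric norm. This is handled by noting that the $T_1$ property depends only on the topology, that $\pi:(X,q)\to(X/Y,w_Y)$ is an identification (as recalled just before the proposition), and that the cited equivalence ``$T_1 \iff$ seminorm strictly positive off the zero coset'' holds verbatim for seminorms by the same neighborhood-of-$0$ computation as in Proposition~\ref{haus}; alternatively, one can phrase the whole argument purely at the level of closures of points, using $\overline{\{x+Y\}}^{\,w_Y}=\pi\big(\overline{\pi^{-1}(x+Y)}^{\,q}\big)$ and the fact that $X/Y$ is $T_1$ iff every singleton is closed iff $\overline{\{Y\}}^{\,w_Y}=\{Y\}$, which translates directly into $\overline{Y}^{\,q}=Y$. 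Everything else is a routine translation between infimum-zero statements and convergence of sequences in $Y$, using first-countability of $(X,q)$.
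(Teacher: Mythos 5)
The paper does not actually prove this proposition; it quotes it from Alegre--Ferrando \cite[Proposition 3.3]{Carmen}, so there is no in-paper argument to compare against. Your overall strategy --- reduce $T_1$ of $(X/Y,w_Y)$ to strict positivity of $w_Y$ off the zero coset (the seminorm analogue of Proposition~\ref{haus}(1), or equivalently closedness of the singleton $\{Y\}$ plus homogeneity), and then translate $w_Y(x+Y)=0$ into a closure statement about $Y$ in $X$ --- is sound, and your fallback route via the identification $\pi$ ($\{Y\}$ is closed in $X/Y$ iff $\pi^{-1}(\{Y\})=Y$ is $q$-closed, since $\pi$ is a continuous open surjection) is the cleanest way to finish.

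However, there is a recurring directional error in your main line of argument that you should repair, because the step is false as literally written. In $(X,q)$ a sequence $z_n$ converges to $x$ iff $q(z_n-x)\to 0$; so $q(x+y_n)=q\bigl(x-(-y_n)\bigr)\to 0$ does \emph{not} say that $-y_n\to x$ in $(X,q)$ --- it says $q^-\bigl((-y_n)-x\bigr)\to 0$, i.e.\ $-y_n\to x$ in the topology of $q^-$, so that $x\in\overline{Y}^{\,q^-}$. The correct identity is
$$w_Y(x+Y)=0\iff -x\in\overline{Y}^{\,q},$$
since $x\in\overline{Y}^{\,q}$ is witnessed by $q(y_n-x)\to 0$, not by $q(x-y_n)\to 0$. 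The same slip appears in your ``only if'' direction, where you pick $x\in\overline{Y}^{\,q}\setminus Y$ and assert $q(x-y_n)\to 0$. Your conclusion survives only because $Y$ is a linear subspace, hence $Y=-Y$ and $\overline{Y}^{\,q^-}=-\overline{Y}^{\,q}$, so $\overline{Y}^{\,q^-}=Y$ iff $\overline{Y}^{\,q}=Y$; likewise $-x\notin Y$ iff $x\notin Y$. Once you insert these sign corrections (or simply run the whole argument through the quotient-map formulation, which avoids the issue entirely), the proof is complete. In an asymmetric setting this left/right distinction is exactly the kind of detail that can silently break an argument, so it is worth stating explicitly which of $q$, $q^-$ governs each convergence.
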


\begin{proposition}\label{coc}
Let $(X,q)$ be an asymmetric normed space and $Y$ be a linear subspace of $X$.
If $X/Y$ is $T_1$  then $\overline{\left<{\theta_q}\right>}\subseteq Y$, where $\left<{\theta_q}\right>$ denotes the linear subspace of $X$ spanned by $\theta_q$, and $\overline{\left<{\theta_q}\right>}$ its $q$-closure.
\end{proposition}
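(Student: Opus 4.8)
The plan is to use Proposition~\ref{coc1}, which tells us that $X/Y$ being $T_1$ is equivalent to $Y$ being $q$-closed. So it suffices to show that a $q$-closed linear subspace $Y$ of $X$ must contain $\theta_q$; once we know $\theta_q \subseteq Y$ and $Y$ is a linear subspace, then $\langle \theta_q\rangle \subseteq Y$, and since $Y$ is $q$-closed we conclude $\overline{\langle \theta_q\rangle} \subseteq \overline{Y} = Y$.

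The key step is therefore to prove the containment $\theta_q \subseteq Y$ for any $q$-closed linear subspace $Y$. Take $x \in \theta_q$, so $q(x) = 0$. The natural idea is to exhibit $x$ as a $q$-limit of points of $Y$; the obvious candidate sequence is the constant-direction sequence $x_n = 0$ translated appropriately, but more to the point we want points of $Y$ approaching $x$ in the $q$-topology. Since $0 \in Y$ and $q(x - 0) = q(x) = 0 < \varepsilon$ for every $\varepsilon > 0$, the point $x$ lies in every basic open ball $B_q(0,\varepsilon)$, hence $x \in \overline{\{0\}}^q \subseteq \overline{Y}^q$. Because $Y$ is $q$-closed this gives $x \in Y$. (Equivalently: $q(x)=0$ means $x$ is in the $q$-closure of $\{0\}$, and $\{0\}\subseteq Y$.) Thus $\theta_q \subseteq Y$.

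Having established $\theta_q \subseteq Y$, linearity of $Y$ yields $\langle \theta_q\rangle \subseteq Y$, and taking $q$-closures together with the $q$-closedness of $Y$ (from Proposition~\ref{coc1}) gives $\overline{\langle\theta_q\rangle} \subseteq Y$, which is the desired conclusion.

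I do not anticipate a serious obstacle here: the only subtlety worth being careful about is that $\theta_q$ is merely a convex cone, not a subspace, so one cannot conclude $\overline{\langle\theta_q\rangle}\subseteq Y$ directly from $\theta_q\subseteq \overline{Y}$ without first invoking that $Y$ is closed and linear — but both facts are available (closedness via Proposition~\ref{coc1}, linearity by hypothesis). One should also double-check the direction of the inclusion $\overline{\{0\}}^q \subseteq Y$: it holds because $\{0\}\subseteq Y$ implies $\overline{\{0\}}^q\subseteq \overline{Y}^q = Y$. That is the whole argument.
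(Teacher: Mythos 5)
Your overall strategy (reduce via Proposition~\ref{coc1} to showing that a $q$-closed linear subspace must contain $\theta_q$, then use linearity and closedness of $Y$) is sound, but the key step contains a genuine error of direction. From $q(x)=0$ you correctly conclude that $x$ lies in every ball $B_q(0,\varepsilon)$, i.e., that every neighborhood of $0$ contains $x$; but this says $0\in\overline{\{x\}}^q$, not $x\in\overline{\{0\}}^q$. In an asymmetric topology these are different statements: a point $z$ belongs to $\overline{\{0\}}^q$ precisely when every ball $B_q(z,\varepsilon)$ contains $0$, i.e., when $q(0-z)=q(-z)=0$, so $\overline{\{0\}}^q=-\theta_q$, whereas the set of points lying in every neighborhood of $0$ is $\theta_q$. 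Concretely, for $q(x)=x^+$ on $\mathbb{R}$ one has $\theta_q=(-\infty,0]$ but $\overline{\{0\}}^q=[0,\infty)$, so $x=-1\in\theta_q$ lies in every $B_q(0,\varepsilon)=(-\infty,\varepsilon)$ yet is not in the closure of $\{0\}$. As written, your argument does not establish $x\in\overline{Y}^q$.

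The gap is repairable with one extra line: since $q(-(-x))=q(x)=0$, it is $-x$ that lies in $\overline{\{0\}}^q\subseteq\overline{Y}^q=Y$, and then $x\in Y$ because $Y$ is a linear subspace (closed under $z\mapsto -z$). With that correction your route works. The paper avoids the issue altogether by computing with the quotient seminorm: for $x\in\theta_q$ one has $w_Y(x+Y)=\inf\{q(x+y):y\in Y\}\le q(x)=0$, and the $T_1$ hypothesis on $(X/Y,w_Y)$ forces $x+Y=Y$ directly; Proposition~\ref{coc1} is invoked only at the end, to pass from $\langle\theta_q\rangle\subseteq Y$ to $\overline{\langle\theta_q\rangle}\subseteq Y$.
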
 

\begin{proof}
Suppose that $X/Y$ is $T_1$ and take $x\in\theta_q$, i.e., $q(x)=0$. Then $w_Y(x+Y)=0$, which implies $x\in Y$. Hence $\left<{\theta_q}\right>\subseteq Y$. Since $Y$ is $q$-closed by Proposition~\ref{coc1}, we get that  $\overline{\left<{\theta_q}\right>}\subseteq Y$.
\end{proof}

To finish this section, we will make a final remark regarding $T_1$ and $T_2$ asymmetric normed spaces.
By Propositions \ref{coc1} and \ref{coc}, if $Y$ is a $q$-closed linear subspace of an asymmetric normed space $(X,q)$, then $Y$ contains $\overline{\left<{\theta_q}\right>}$. However, as we will show in Example \ref{parabola}, $\overline{\left<{\theta_q}\right>}$ might not be a linear subspace. 
Let $Y_{\theta}$ be defined as the smallest $q$-closed subspace of $X$ containing $\theta_q$. Therefore, $X/Y_{\theta}$ is a $T_1$ space and $Y_{\theta}$ is the smallest subspace such that the quotient is $T_1$.

On the other hand, in \cite{hous} it was proved that $X/\ker\left\|{\cdot}\right\|_q$ is a $T_2$ space (in particular, $Y_{\theta}\subseteq\ker\left\|{\cdot}\right\|_q$). Let us show that $\ker\left\|{\cdot}\right\|_q$ is the smallest subspace such that the quotient is $T_2$.

\begin{proposition}\label{prop cociente t2}
Let $(X,q)$ be an asymmetric normed space and $Y$ be a linear subspace of $X$. If $X/Y$ is $T_2$ then $\ker\left\|{\cdot}\right\|_q\subseteq Y$.
\end{proposition}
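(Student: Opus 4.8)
The plan is to argue by contrapositive, exploiting the fact that $\ker\|\cdot\|_q$ is a linear subspace and that the quotient seminorm $w_Y$ dominates (a pushforward of) the seminorm $\|\cdot\|_q$. Concretely, suppose $\ker\|\cdot\|_q\not\subseteq Y$ and pick $x_0\in\ker\|\cdot\|_q$ with $x_0\notin Y$; I will produce a nonzero element of $\ker\|\cdot\|_{w_Y}$, which by statement (A2) applied to the asymmetric (semi)normed space $(X/Y,w_Y)$ forces $X/Y$ to fail $T_2$.

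First I would recall the definition
$$\|u\|_q=\inf\{q(y)+q(y-u):y\in X\}$$
and the analogous seminorm $\|\cdot\|_{w_Y}$ on $X/Y$ built from $w_Y$. The key estimate is that for every coset $x+Y$,
$$\|x+Y\|_{w_Y}\le\|x\|_q,$$
which follows because $w_Y(z+Y)\le q(z)$ for all $z\in X$ (take $y=0$ in the infimum defining $w_Y$), and hence, taking the infimum over $y\in X$ in the two terms $w_Y((x+Y)\cdot\text{shifts})$, each term defining $\|x+Y\|_{w_Y}$ is bounded by the corresponding term of $\|x\|_q$. Applying this with $x=x_0$ gives $\|x_0+Y\|_{w_Y}\le\|x_0\|_q=0$, so $x_0+Y\in\ker\|\cdot\|_{w_Y}$. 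Since $x_0\notin Y$, the coset $x_0+Y$ is the nonzero element of $X/Y$, so $\ker\|\cdot\|_{w_Y}\ne\{0\}$.

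Now I would invoke Proposition~\ref{haus}(2) (equivalently (A2)) for the space $(X/Y,w_Y)$: since $\ker\|\cdot\|_{w_Y}\ne\{0\}$, the quotient $X/Y$ is not $T_2$. This contradicts the hypothesis, completing the contrapositive. The only point requiring mild care — and the main (though modest) obstacle — is that $w_Y$ is merely an asymmetric \emph{seminorm}, not necessarily an asymmetric norm, so I should check that Proposition~\ref{haus}(2) and the construction of $\|\cdot\|_{w_Y}$ go through verbatim in the seminorm setting; this is the case because the proof of Proposition~\ref{haus}(2) (and the fact that $\|\cdot\|_{w_Y}$ is the largest symmetric seminorm below $w_Y$) uses only the positive homogeneity and the triangle inequality, both of which $w_Y$ satisfies. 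Alternatively, one can avoid mentioning $\|\cdot\|_{w_Y}$ altogether and argue directly: if $x_0\in\ker\|\cdot\|_q\setminus Y$, then for every $\varepsilon>0$ there is $y_\varepsilon\in X$ with $q(y_\varepsilon)+q(y_\varepsilon-x_0)<\varepsilon$, hence $w_Y(y_\varepsilon+Y)+w_Y(y_\varepsilon-x_0+Y)<\varepsilon$, which shows that the point $x_0+Y$ and $0$ cannot be separated in $X/Y$, so $X/Y$ is not $T_2$.
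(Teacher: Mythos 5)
Your proposal is correct and follows essentially the same route as the paper: both hinge on the inequality $\left\|x+Y\right\|_{w_Y}\le\left\|x\right\|_q$, proved by bounding each term of the infimum via $w_Y(z+Y)\le q(z)$, and then apply the kernel characterization of $T_2$ to the quotient; you merely phrase it contrapositively. Your remark that one must check the characterization still holds when $w_Y$ is only an asymmetric seminorm is a point of care the paper passes over silently, and your alternative direct argument at the end is a nice way to sidestep it entirely.
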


\begin{proof}
Let us show that the corresponding seminorm $\left\|{\cdot}\right\|_{w_Y}$ satisfies the following inequality
$$\left\|{x+Y}\right\|_{w_Y}\le\left\|{x}\right\|_{q},\;\text{ for all }x\in X.$$

By definition, $$\left\|{x+Y}\right\|_{w_Y}=\inf\{w_Y(x_0-x+Y)+w_Y(x_0+Y):x_0\in X\}.$$
Since $w_Y(x_0-x+Y)\le q(x_0-x)$ and $w_Y(x_0+Y)\le q(x_0)$ for all $x_0\in X$, we obtain $$\left\|{x+Y}\right\|_{w_Y}\le\inf\{q(x_0-x)+q(x_0):x_0\in X\}=\left\|{x}\right\|_{_q}.$$

This proves the inequality. Therefore, if $x\in\ker\left\|{\cdot}\right\|_q$ then $\left\|{x+Y}\right\|_{w_Y}=0$. Since $X/Y$ is $T_2$, then $x+Y=Y$, hence $x\in Y$.
\end{proof}

\section{Product structure of asymmetric normed spaces}

In the case of a normed space, say $X$, if $Y$ is a closed linear subspace and $X/Y$ is finite-dimensional, then $X$ is homeomorphic to $Y\times Z$, for any subspace $Z$ complementary to $Y$ (see \cite[Chapter III]{bessa}). We will show that the same result holds in asymmetric normed spaces. With this, we can give a natural decomposition of the so called right-bounded asymmetric normed spaces. 

In \cite[Theorem 14]{hous}, the authors proved that any asymmetric normed space $(X,q)$ is linearly homeomorphic to $\ker\left\|{\cdot}\right\|_q\times (X/\ker\left\|{\cdot}\right\|_q)$ if and only if $\ker\left\|{\cdot}\right\|_q$ is complemented. Inspired by the proof of this theorem, we will show  that in fact $X$ is linearly homeomorphic to $Y\times (X/Y)$ (with a certain condition) if and only if $Y$ is complemented. Let us recall some definitions.

\begin{definition} Let $X$ be a linear space and $Y$ be a linear subspace of $X$.
\begin{enumerate}[\rm(1)]
\item A {\it projection} of the subspace $Y$ is a linear function $Q:X\to Y$ such that $Q(y)=y$ for every $y\in Y$.
\item If $Z$ is a subspace of $X$, we say that $Z$ is an algebraic complement of $Y$ if $X=Y\oplus Z$. Namely, $Y\cap Z=\{0\}$ and $X=Y+Z$.
\item If $(X,q)$ is an asymmetric normed space, the subspace $Y$ is called topologically complemented if there exists a projection $Q:X\to Y$ such that $Q$ and $I-Q$ are continuous functions in $(X,q)$, where $I:X\to X$ is the identity\footnote{Unlike the normed case, in asymmetrically normed spaces the continuity of $Q$ does not necessarily imply the continuity of $I-Q$.}. The subspace $Z=\ima (I-Q)$ will be called a topological complement of $Y$.
\end{enumerate}
\end{definition}

It is well-known that if $Q:X\to Y$ is a projection then $X=\ima Q\oplus\ker Q$ (see for instance \cite{friedberg} or \cite{Conway}).



On the other hand, it was proved in \cite[Lemma 10]{hous} that a linear function between asymmetric normed spaces, say $f:(X,q)\to (X_1,p)$, is continuous if and only if there exists $K>0$ such that $p(f(x))\le K q(x)$ for all $x\in X$. Hence, a subspace $Y$ of $(X,q)$ is complemented if and only if there exist a projection $Q:X\to Y$ and $K>0$ such that for every $x\in X$, $$\max\{q(Q(x)),q(x-Q(x))\}\le Kq(x).$$

In what follows, if $(X,q)$ and $(Y,p)$ are asymmetric normed spaces, we will assume that the product $X\times Y$ is endowed with the asymmetric norm $$q^*(x,y)=\max\{q(x),p(y)\}.$$
It is easily seen that $q^*$ induces the product topology on $X\times Y$.

\begin{theorem}\label{compl}

Let $(X,q)$ be an asymmetric normed space and $Y$ and $Z$ linear subspaces of $X$ such that $X=Y\oplus Z$, with $P_Y$ the projection of $X$ onto $Y$ along $Z$. Define the maps $\phi:Y\times Z\to X$ and $\psi:X/Y\to Z$ given by $\phi(y,z)=y+z$ and $\psi(x+Y)=(I-P_Y)(x)$. Then the following statements are equivalent.
\begin{enumerate}[\rm(1)]
\item $P_Y$ and $I-P_Y$ are continuous.
\item $\phi$ is a homeomorphism.
\end{enumerate}
In this case $\psi:X/Y\to Z$ is a homeomorphism as well.
\end{theorem}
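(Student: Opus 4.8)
The plan is to prove the equivalence $(1)\Leftrightarrow(2)$ first, and then deduce the homeomorphy of $\psi$ as a byproduct. For the implication $(1)\Rightarrow(2)$: the map $\phi$ is clearly a linear bijection, with inverse $\phi^{-1}(x)=(P_Y(x),(I-P_Y)(x))$. Since $X\times Y$ is never needed here — we work with $Y\times Z$ endowed with $q^*(y,z)=\max\{q(y),q(z)\}$ — continuity of $\phi$ follows immediately from the triangular inequality: $q(y+z)\le q(y)+q(z)\le 2q^*(y,z)$, so by \cite[Lemma 10]{hous} (the $K$-boundedness criterion for continuity of linear maps) $\phi$ is continuous. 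For the inverse, assuming $(1)$, both coordinate functions $x\mapsto P_Y(x)$ and $x\mapsto (I-P_Y)(x)$ are continuous by hypothesis, so there is $K>0$ with $q(P_Y(x))\le Kq(x)$ and $q((I-P_Y)(x))\le Kq(x)$ for all $x$; hence $q^*(\phi^{-1}(x))=\max\{q(P_Y(x)),q((I-P_Y)(x))\}\le Kq(x)$, and again by \cite[Lemma 10]{hous} $\phi^{-1}$ is continuous. Thus $\phi$ is a homeomorphism.

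For $(2)\Rightarrow(1)$: if $\phi$ is a homeomorphism, then $\phi^{-1}:X\to Y\times Z$ is continuous, so there is $K>0$ with $\max\{q(P_Y(x)),q((I-P_Y)(x))\}=q^*(\phi^{-1}(x))\le Kq(x)$ for every $x\in X$. In particular $q(P_Y(x))\le Kq(x)$ and $q((I-P_Y)(x))\le Kq(x)$, which by \cite[Lemma 10]{hous} gives continuity of $P_Y$ and of $I-P_Y$ simultaneously. This is the step where one must be a little careful in the asymmetric setting — as the footnote in the paper stresses, continuity of $P_Y$ alone would not suffice, but here the $q^*$-continuity of $\phi^{-1}$ delivers the bound on \emph{both} components at once, so no extra argument is required.

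Finally, for the statement about $\psi$: note that $\psi\circ\pi=I-P_Y$, where $\pi:X\to X/Y$ is the canonical quotient map, since $\psi(\pi(x))=\psi(x+Y)=(I-P_Y)(x)$. First one checks $\psi$ is well-defined (if $x+Y=x'+Y$ then $x-x'\in Y=\ker(I-P_Y)$, so $(I-P_Y)(x)=(I-P_Y)(x')$) and bijective (its inverse is $z\mapsto z+Y$, using $X=Y\oplus Z$). Since $\pi$ is an identification (continuous, open, onto, as recalled in the paper) and $\psi\circ\pi=I-P_Y$ is continuous under hypothesis $(1)$, the universal property of identifications yields that $\psi$ is continuous. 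For the inverse $\psi^{-1}:Z\to X/Y$, $z\mapsto z+Y$, observe it equals $\pi|_Z$, the restriction of the continuous map $\pi$ to $Z$, hence is continuous. Therefore $\psi$ is a homeomorphism. I expect no genuine obstacle here; the only thing demanding attention is the repeated, careful invocation of the asymmetric linear-continuity criterion \cite[Lemma 10]{hous} in place of the familiar symmetric argument, and keeping track of the fact that $q^*$-estimates automatically control both projection components.
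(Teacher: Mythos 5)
Your proof is correct and follows essentially the same route as the paper's: the same inverse formula for $\phi$, the same $q^*$-estimates via the linear-continuity criterion of \cite[Lemma 10]{hous} in both directions, and the same use of the quotient map $\pi$ being an identification to get continuity of $\psi$. The only (harmless) variation is that you obtain continuity of $\psi^{-1}$ by viewing it as $\pi|_Z$, whereas the paper checks the explicit inequality $w_Y(z+Y)\le q(\psi(z+Y))$; both arguments are immediate.
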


\begin{proof}
Let us suppose that (1) is true. Clearly $\phi$ is a linear isomorphism whose inverse is given by $\phi^{-1}(x)=(P_Y(x),(I-P_Y)(x))$. Since $P_Y$ and $I-P_Y$ are continuous then $\phi^{-1}$ is continuous too. On the other hand, for $(y,z)\in Y\times Z$ we have $$q(\phi(y,z))=q(y+z)\le q(y)+q(z)\le 2q^*(y,z),$$ and hence $\phi$ is continuous.

Suppose now that (2) is true. Since $\phi^{-1}$ is continuous there exists $C>0$ such that $$q^*(P_Y(x),(I-P_Y)(x))=q^*(\phi^{-1}(x))\le Cq(x).$$
This proves that both $P_Y$ and $I-P_Y$ are continuous, and since $Z=\ima(I-P_Y)$ then $Z$ is a topological complement of $Y$.

Finally, let us show that in this case $\psi$ is a homeomorphism as well. The inverse function of $\psi$ is given by $\psi^{-1}(z)=z+Y.$ For any $z+Y\in X/Y$ we have $$w_Y(z+Y)=\inf\{q(z+y):y\in Y\}\le q(z+P_Y(-z))=q(\psi(z+Y)),$$ hence $\psi^{-1}$ is continuous.

The continuity of $\psi$ follows from \cite[Proposition 2.4.2]{engel}.\end{proof}

As we mentioned earlier, it is well known that in (symmetric) normed spaces, $X$ admits a decomposition of the form $Y\times Z$ if $X/Y$ is finite-dimensional and $Y$ is closed. We will show that such decomposition holds in asymmetric normed spaces too. According to Theorem \ref{compl}, it is enough to prove that $Y$ is topologically complemented. 



\begin{proposition}\label{decomp}
Let $(X,q)$ be an asymmetric normed space and $Y$ be a $q$-closed subspace of $X$. If $X=Y\oplus Z$ and $Z$ is finite-dimensional, then  $(Z,q)$ is linearly homeomorphic to $X/Y$ and $Y$ and $Z$ are (topologically)-complementary to each other. 
\end{proposition}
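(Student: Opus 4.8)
The plan is to reduce everything to Theorem~\ref{compl}: once we know that the projection $P_Y$ of $X$ onto $Y$ along $Z$ and the complementary map $I-P_Y$ are both $q$-continuous, that theorem gives simultaneously that $\phi\colon Y\times Z\to X$ and $\psi\colon X/Y\to Z$ are homeomorphisms, which is exactly what is claimed. So the whole proof amounts to establishing continuity of $P_Y$ and $I-P_Y$.

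First I would collect the structural facts about $Z$ and $X/Y$. Since $Y$ is $q$-closed, Proposition~\ref{coc1} says $X/Y$ is $T_1$, and then Proposition~\ref{coc} gives $\langle\theta_q\rangle\subseteq Y$; in particular $Z\cap\theta_q\subseteq Z\cap Y=\{0\}$, so $q(z)>0$ for all $z\in Z\setminus\{0\}$ and $(Z,q)$ is $T_1$ by Proposition~\ref{haus}(1). Because $X=Y\oplus Z$ we have $\dim(X/Y)=\dim Z<\infty$, so both $(Z,q)$ and $(X/Y,w_Y)$ (here $w_Y$ is an asymmetric norm, $Y$ being $q$-closed) are $T_1$ finite-dimensional asymmetric normed spaces; by Proposition~\ref{norm} (equivalently Lemma~\ref{zeta}) each of them is normable, hence is a finite-dimensional Hausdorff topological vector space, i.e.\ linearly homeomorphic to a Euclidean space.

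Next I would identify the relevant maps. The restriction $\pi|_Z\colon (Z,q)\to (X/Y,w_Y)$ of the canonical continuous open surjection $\pi$ is a continuous linear bijection between finite-dimensional Hausdorff topological vector spaces, hence a linear homeomorphism; its inverse is precisely the map $\psi$ of Theorem~\ref{compl} (namely $x+Y\mapsto (I-P_Y)(x)$), so $(Z,q)$ is linearly homeomorphic to $X/Y$, which is the first assertion. Moreover $I-P_Y=\psi\circ\pi$ is a composition of $q$-continuous maps, hence $q$-continuous as a map $X\to Z$ and therefore as a map $X\to X$.

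The one genuinely non-formal step — and the main obstacle, precisely because in the asymmetric setting the continuity of $I-P_Y$ does not formally imply that of $P_Y$ (cf.\ the footnote after the definition of topological complement) — is to deduce that $P_Y$ is $q$-continuous. Here I would use that $(Z,q)$ is normable: in the normed topology of $Z$ the map $z\mapsto -z$ is continuous, so $x\mapsto -(I-P_Y)(x)$ is a continuous map $(X,q)\to (Z,q)$, hence (composing with the subspace inclusion $Z\hookrightarrow X$) a continuous map $(X,q)\to (X,q)$; composing the continuous map $x\mapsto\bigl(x,-(I-P_Y)(x)\bigr)$ with the continuous addition $X\times X\to X$ then shows that $P_Y=I-(I-P_Y)$ is $q$-continuous. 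With $P_Y$ and $I-P_Y$ both continuous, Theorem~\ref{compl} applies: $\phi$ and $\psi$ are homeomorphisms, and $Y$ and $Z$ are topological complements of one another, the projections $P_Y$ onto $Y$ and $I-P_Y$ onto $Z$ witnessing this.
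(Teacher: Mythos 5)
Your proposal is correct and follows essentially the same route as the paper: both reduce the statement to Theorem~\ref{compl} by proving that $P_Y$ and $I-P_Y$ are $q$-continuous, using that $X/Y$ (equivalently $Z$) is a $T_1$ finite-dimensional asymmetric normed space and hence normable by Proposition~\ref{norm}, so that $\psi$ and $-\psi$ are continuous into $(Z,q)$ and $P_Y=I+(-\psi)\circ\pi$, $I-P_Y=\psi\circ\pi$ are compositions of continuous maps. The only cosmetic difference is that you obtain the continuity of $\psi$ by inverting the linear homeomorphism $\pi|_Z$ between finite-dimensional Hausdorff topological vector spaces, whereas the paper observes directly that $-\psi:X/Y\to(Z,q^s)$ is a linear map between finite-dimensional normed spaces and hence continuous.
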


\begin{proof}
Let $Q:X\to Y$ be the projection onto $Y$ along $Z$. Since $\psi:X/Y\to Z$ given by $\psi(x+Y)=(I-Q)(x)$ is a linear isomorphism, $X/Y$ is finite-dimensional too. Moreover, since $Y$ is $q$-closed, then by Proposition \ref{coc1} $X/Y$ is a $T_1$ asymmetric normed space. By Theorem \ref{norm} $X/Y$ is normable. This implies that the map between normed spaces $-\psi:X/Y\to (Z,q^s)$ is continuous and therefore it remains continuous when $Z$ has the weaker topology inherited by $q$.

If $\pi:X\to X/Y$ is the quotient map, then for every $x\in X,$ $$-\psi(\pi(x))=-\psi(x+Y)=Q(x)-x.$$

Hence $Q=I+(-\psi)\circ\pi$ and $I-Q=\psi\circ\pi$ are continuous projections, so $Y$ is topologically complemented (and so is $Z$). Finally, Theorem \ref{compl} implies that $(Z,q)$ is linearly homeomorphic to $X/Y$. \end{proof}

If $X$ is a normed space, and $Q:X\to Y$ is a continuous projection, then we always have that $Y=\ker(I-Q)$ and $Z=\ker Q$ are necessarily  closed linear subspaces. This situation changes in the case of asymmetric normed spaces, as we can see in the following example.

\begin{example}
Let $X=\mathbb{R}^2$ and consider the asymmetric norm $q(x,y)=\max\{x^+,|y|\}$. It is easy to see that $Y=\{(x,y):y=0\}$ is $q$-closed. Since $Z=\{(x,y):x=0\}$ is finite-dimensional and $X=Y\oplus Z$, then, by Proposition \ref{decomp}, $Y$ and $Z$ are topologically complemented to each other.  However, $Z$ is not $q$-closed because any point $(x,y)$ with $x>0$ is in the $q$-closure of $Z$. 
\end{example}

Despite this situation, the subspace $Z$ may have nice properties, as we can see in the following.

\begin{corollary}\label{corol}
Let $(X,q)$ be an asymmetric normed space, and $Y$ be a $q$-closed subspace of $X$. If $X=Y\oplus Z$ and $Z$ is finite-dimensional, then $X$ is linearly homeomorphic to $Y\times Z$ and $Z$ is a $T_1$ subspace of $(X,q)$.
\end{corollary}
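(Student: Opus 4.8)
The plan is to deduce Corollary~\ref{corol} as a direct consequence of Proposition~\ref{decomp} together with Theorem~\ref{compl} and Lemma~\ref{zeta}. First I would invoke Proposition~\ref{decomp}: since $Y$ is $q$-closed and $X=Y\oplus Z$ with $Z$ finite-dimensional, we get that $Y$ and $Z$ are topologically complementary to each other, and in particular there is a projection $P_Y:X\to Y$ along $Z$ with both $P_Y$ and $I-P_Y$ continuous. Then I would apply the equivalence $(1)\Leftrightarrow(2)$ of Theorem~\ref{compl} to the map $\phi:Y\times Z\to X$, $\phi(y,z)=y+z$, concluding that $\phi$ is a homeomorphism; since $\phi$ is visibly linear, this gives that $X$ is linearly homeomorphic to $Y\times Z$ (with $Y\times Z$ carrying the product asymmetric norm $q^*$, which by the remark preceding Theorem~\ref{compl} induces the product topology).

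It remains to see that $Z$, as a subspace of $(X,q)$, is $T_1$. Here I would use Lemma~\ref{zeta}(1): it suffices to check that $\langle\theta_q\rangle\cap Z=\{0\}$. The key observation is that $\theta_q\subseteq Y$: indeed, by Proposition~\ref{coc1} the quotient $X/Y$ is $T_1$ (because $Y$ is $q$-closed), so by Proposition~\ref{coc} we have $\overline{\langle\theta_q\rangle}\subseteq Y$, and in particular $\langle\theta_q\rangle\subseteq Y$. Since $X=Y\oplus Z$ forces $Y\cap Z=\{0\}$, we obtain $\langle\theta_q\rangle\cap Z=\{0\}$, and Lemma~\ref{zeta}(1) then yields that $Z$ is $T_1$. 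Alternatively, one could observe directly that $\psi:X/Y\to Z$ from Proposition~\ref{decomp} is a linear homeomorphism and $X/Y$ is $T_1$, so $Z$ is $T_1$ as well; this is arguably the shortest route and avoids reproving anything.

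I do not expect a genuine obstacle here, since all the heavy lifting has already been done in Proposition~\ref{decomp} and Theorem~\ref{compl}; the corollary is essentially a repackaging. The only point requiring a little care is making sure the homeomorphism produced is linear — but this is immediate because $\phi(y,z)=y+z$ is linear and a linear bijective homeomorphism is a linear homeomorphism. A second minor point is to be explicit about which topology $Z$ carries when we assert it is $T_1$: it is the subspace topology inherited from $(X,q)$, which is the same as the one transported from $X/Y$ via $\psi$, so there is no ambiguity.

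\begin{proof}
By Proposition~\ref{decomp}, $Y$ and $Z$ are topologically complementary to each other; thus if $P_Y:X\to Y$ denotes the projection onto $Y$ along $Z$, then $P_Y$ and $I-P_Y$ are continuous. Applying the implication $(1)\Rightarrow(2)$ of Theorem~\ref{compl}, the linear map $\phi:Y\times Z\to X$ given by $\phi(y,z)=y+z$ is a homeomorphism; since $\phi$ is a linear isomorphism, $X$ is linearly homeomorphic to $Y\times Z$.

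Finally, since $Y$ is $q$-closed, Proposition~\ref{coc1} gives that $X/Y$ is $T_1$, and then Proposition~\ref{coc} yields $\overline{\left<\theta_q\right>}\subseteq Y$; in particular $\left<\theta_q\right>\cap Z=\{0\}$ because $Y\cap Z=\{0\}$. By Lemma~\ref{zeta}(1), the subspace $(Z,q)$ is $T_1$.
\end{proof}
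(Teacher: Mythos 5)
Your proposal is correct and follows essentially the same route as the paper: Proposition~\ref{decomp} plus Theorem~\ref{compl} give the linear homeomorphism $X\cong Y\times Z$, and the $T_1$ property of $Z$ comes from $Y$ being $q$-closed. The only (immaterial) difference is that you derive $T_1$ for $Z$ via $\left<\theta_q\right>\cap Z=\{0\}$ and Lemma~\ref{zeta}(1), whereas the paper simply transports $T_1$ from $X/Y$ through the homeomorphism $\psi$ — the alternative you yourself mention.
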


\begin{proof}
By Proposition \ref{decomp}, $Y$ is complemented and $Z$ is linearly homeomorphic to $X/Y$. By Theorem \ref{compl}, $X$ is linearly homeomorphic to $Y\times Z$. The space $X/Y$ is $T_1$ because $Y$ is $q$-closed, by Proposition \ref{coc1}. Hence $Z$ is $T_1$.
\end{proof}

Remember that by Proposition \ref{coc}, any $q$-closed subspace must contain $\overline{\left<{\theta_q}\right>}$.  Hence, if $Y=\overline{\left<{\theta_q}\right>}$ is a linear subspace of $(X,q)$, then it is the smallest subspace for which the decomposition in Corollary \ref{corol} holds. However, it turns out that $\overline{\left<{\theta_q}\right>}$ might not be a linear subspace of $X$. Besides, even if it is a subspace, it could happen that $X=\overline{\left<{\theta_q}\right>}$ and then such decomposition would be trivial. We illustrate these situations with the following example.

\begin{example}\label{parabola}
(1) Let $X=\mathbb{R}^2$. In \cite[Example 2.7]{Conradie 2}, an asymmetric norm $q$ was defined  by $$q(x,y)=\frac{1}{2}\left(-y+\sqrt{4x^2+y^2}\right).$$ This asymmetric norm satisfies $$B_q(0,1)=\{(x,y):y>x^2-1\},$$ and $\theta_q=\{(0,y):y\ge 0\}$. Thus $\left<{\theta_q}\right>=\{(0,y):y\in\mathbb{R}\}$. The $q$-closure $\overline{\left<{\theta_q}\right>}$ is the whole space $\mathbb{R}^2$, because all nonempty $q$-open sets intersect $\theta_q$.

(2) Consider $X$ and $q$ given as in $(1)$, and let $p:X\to\mathbb{R}^+$ defined by $p(x,y)=\max\{|x|,y^-\}$, where $y^-=\max\{-y,0\}$. Let $r:X\to\mathbb{R}^+$ be 
\begin{equation*}
   r(x,y) = \begin{cases}
               p(x,y)          & x\le 0\\
               q(x,y)          & x\ge 0
           \end{cases}
\end{equation*}
Then $r$ is an asymmetric norm on $X$ and we have $\left<{\theta_{r}}\right>=\{(0,y):y\in\mathbb{R}\}$, while its $r$-closure is $\{(x,y):x\le 0\}$ which is not a linear subspace of $X$.
\end{example}

To finish this section, we will show that there is a class of asymmetric normed spaces such that $\overline{\left<{\theta_q}\right>}=\left<{\theta_q}\right>$. Therefore, in this case the $q$-closure of $\left<{\theta_q}\right>$ is indeed a linear subspace of $X$.

In \cite[Definition 16]{luis},  the class of right-bounded asymmetric normed spaces was defined. An asymmetric normed space $(X,q)$ is called {\it right-bounded} if there exists $r>0$ such that $$B_q(0,1)\subseteq B_{q^s}(0,r)+\theta _q.$$

\begin{proposition}\label{derecho}
Let $(X,q)$ be a right-bounded asymmetric normed space with the  following  two properties: 
\begin{enumerate}[\rm(1)]
\item $\left<{\theta_q}\right>$ is $q^s$-closed.
\item There exists a norm $\left\|{\cdot}\right\|$ equivalent to $q^s$ such that $(X,\left\|{\cdot}\right\|)$ is a Hilbert space.
\end{enumerate}
Then $\left<{\theta_q}\right>$ is a $q$-closed linear subspace of $X$. 

In particular, if $(X,q)$ is a right-bounded finite-dimensional asymmetric normed space, then $\left<{\theta_q}\right>$ is a $q$-closed linear subspace of $X$.
\end{proposition}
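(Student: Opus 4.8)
The plan is to show directly that $Y:=\langle\theta_q\rangle$ — which is a linear subspace by construction — is equal to its own $q$-closure; since the $q^s$-topology is finer than the $q$-topology, the non-trivial inclusion is $\overline{Y}^q\subseteq Y$ (equivalently, by Proposition~\ref{coc1}, that $X/Y$ is $T_1$), and I would argue it directly. Because $(X,q)$ is first-countable with the stated sequential convergence, it suffices to take a point $x\in X$ and a sequence $(a_n)\subseteq Y$ with $q(a_n-x)\to 0$, and to deduce $x\in Y$. Writing $w_n=a_n-x$, we have $q(w_n)\to 0$; if $q(w_n)=0$ for some $n$ then $w_n\in\theta_q\subseteq Y$ and hence $x=a_n-w_n\in Y$ at once, so I may assume $q(w_n)>0$ for every $n$.

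The key step is to apply the right-boundedness inclusion to $w_n$ at the right scale. Since $\theta_q$ is a cone, $B_q(0,1)\subseteq B_{q^s}(0,r)+\theta_q$ rescales to $B_q(0,s)\subseteq B_{q^s}(0,rs)+\theta_q$ for every $s>0$. Applying this with $s=2q(w_n)>q(w_n)$ (so that $w_n\in B_q(0,s)$) produces a decomposition $w_n=b_n+c_n$ with $c_n\in\theta_q$ and $q^s(b_n)<2r\,q(w_n)\to 0$. Now $a_n-c_n\in Y$ (because $a_n\in Y$ and $c_n\in\theta_q\subseteq Y$) and $(a_n-c_n)-x=b_n$, so $q^s\big((a_n-c_n)-x\big)=q^s(b_n)\to 0$; that is, $a_n-c_n\to x$ in the $q^s$-topology. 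Since hypothesis (1) says $Y$ is $q^s$-closed, we conclude $x\in Y$, completing the general statement. For the final assertion, when $X$ is finite-dimensional $q^s$ is an honest norm, every linear subspace of the finite-dimensional normed space $(X,q^s)$ is $q^s$-closed (so (1) holds for $Y$), and $(X,q^s)$ is linearly homeomorphic to a Euclidean space, hence carries an equivalent Hilbert norm (so (2) holds); thus the finite-dimensional case is a particular instance of the general one.

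The one step I expect to require care is the choice of scale: one must invoke right-boundedness at radius comparable to $q(w_n)\to 0$ rather than at radius $1$, which is precisely what forces the a priori uncontrolled component $b_n$ to be $q^s$-small. A secondary point to handle carefully is the description of the $q$-closure: one needs $x\in\overline{Y}^q$ to be exactly the solvability of $q(a_n-x)\to 0$ with $a_n\in Y$ (note the asymmetry — it is $q(a_n-x)$, not $q(x-a_n)$), which is legitimate here because $(X,q)$ is first-countable. In this route hypothesis (2) does not seem to enter beyond what (1) already provides; presumably the Hilbert structure is used only if one instead argues through an orthogonal complemented decomposition of $Y$.
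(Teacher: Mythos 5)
Your proof is correct, and it takes a genuinely different route from the paper's. Both arguments hinge on the same key trick: applying the right-boundedness inclusion at scale comparable to $q(a_n-x)\to 0$ (using that $\theta_q$ is a cone, so $B_q(0,s)\subseteq B_{q^s}(0,rs)+\theta_q$) to write $a_n-x=b_n+c_n$ with $c_n\in\theta_q$ and $q^s(b_n)\to 0$. Where you diverge is the endgame. The paper uses hypothesis (2) to form the orthogonal decomposition $X=Y\oplus Y^{\perp}$, writes $x=y+z$ with $z\in Y^{\perp}$, and uses the Pythagorean inequality $N\|z\|\le q^s\bigl((a_n-c_n-y)-z\bigr)<r/n$ to force $z=0$. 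You instead observe that $a_n-c_n\in Y$ and $q^s\bigl((a_n-c_n)-x\bigr)=q^s(b_n)\to 0$, so $x$ lies in the $q^s$-closure of $Y$, and hypothesis (1) finishes. Your version is more economical and in fact shows that hypothesis (2) is superfluous for the conclusion: only $q^s$-closedness of $\left<\theta_q\right>$ and right-boundedness are needed, which slightly strengthens the proposition (the Hilbert structure is genuinely used elsewhere in the paper, e.g.\ to produce the complement $Z=Y^{\perp}$ in Corollary~\ref{linh}, but not for $q$-closedness itself). Your handling of the closure via sequences is legitimate by first countability, and your reduction of the finite-dimensional case matches the paper's.
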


\begin{proof}
Let us call $Y=\left<{\theta_q}\right>$. Since $Y$ is $q^s$-closed, it is also closed in the Hilbert space $(X,\left\|{\cdot}\right\|)$. Therefore $X=Y\oplus{Y}^{\perp}$, where $Y^{\perp}$ is the orthogonal complement of $Y$ (see, e.g. \cite[Theorem 3.3-4]{krey}). Let $N>0$ be such that $$N\left\|{x}\right\|\le q^s(x),$$for all $x\in X$. If $y\in Y$ and $z\in Y^{\perp}$, we know that $$\left\|{y+z}\right\|^2=\left\|{y}\right\|^2+\left\|{z}\right\|^2.$$

Hence, for any $y\in Y$ and $z\in Y^\perp$, 

\begin{equation}\label{d:norma y qs}
N\left\|{z}\right\|\le N\left\|{y+z}\right\|\le q^s(y+z).
\end{equation}

To prove that $Y$ is $q$-closed, let $(y_n)\subseteq Y$ be a sequence that converges to some $x\in X$. This means that $q(y_n-x)$ converges to zero. So, without loss of generality, we can suppose that $q(y_n-x)\le 1/n$ for all $n\in\mathbb{N}$. Since $X$ is right-bounded, there exists $r>0$ such that $$B_q(0,1)\subseteq B_{q^s}(0,r)+\theta _q.$$

Therefore $y_n-x\in B_{q^s}(0,r/n)+\theta _q$. For each $n\in\mathbb{N}$, pick $v_n\in\theta _q$ with the property that $y_n-x-v_n\in B_{q^s}(0,r/n)$. Let $y\in Y$ and $z\in Y^{\perp}$ be such that $x=y+z$. Then    
$$N\left\|{z}\right\|\le q^s(y_n-v_n-y-z)<r/n,\; \text{ for every }n\in\mathbb{N}.$$ 
This implies that $z=0$ and hence $x\in Y$, as desired.
\end{proof}

Condition (2) in Proposition \ref{derecho} may seem  a little restrictive, however, there   are many  known conditions for a norm being equivalent to a norm generated by an inner product. See for instance \cite[Theorem 7.12.68]{istratescu} and \cite[Theorem 3.3]{jacek}.

Proposition \ref{derecho} and Corollary \ref{corol} yield the following corollary.

\begin{corollary}\label{linh}
Let $(X,q)$ be a right-bounded asymmetric normed space such that $X$ satisfies (1) and (2) from Proposition \ref{derecho}. If $X/\left<{\theta_q}\right>$ is finite-dimensional, then $X$ is linearly homeomorphic to $\left<{\theta_q}\right>\times Z$, where $Z$ is a $T_1$ subspace of $(X,q)$, linearly homeomorphic to $X/\left<{\theta_q}\right>$. In particular $Z$ is linearly homeomorphic to a Euclidean space.

\end{corollary}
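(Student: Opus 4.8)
The plan is to read the statement off Proposition~\ref{derecho} and Corollary~\ref{corol}, adding only a short final argument to identify $Z$ with a Euclidean space. Write $Y=\left<{\theta_q}\right>$. Since $X$ satisfies hypotheses (1) and (2) of Proposition~\ref{derecho}, that proposition gives immediately that $Y$ is a $q$-closed linear subspace of $X$. Because $X/Y$ is finite-dimensional, I would then produce an algebraic complement of $Y$ of finite dimension: choose a basis of $X/Y$, lift each basis vector through the canonical quotient map $\pi:X\to X/Y$, and let $Z$ be the span of these lifts. Then $X=Y\oplus Z$, $\dim Z=\dim(X/Y)<\infty$, and $\pi|_Z:Z\to X/Y$ is a linear isomorphism.

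Now $Y$ is $q$-closed, $X=Y\oplus Z$, and $Z$ is finite-dimensional, so Corollary~\ref{corol} applies verbatim: $X$ is linearly homeomorphic to $Y\times Z$ and $Z$ is a $T_1$ subspace of $(X,q)$. Moreover, Proposition~\ref{decomp}, which is the engine behind Corollary~\ref{corol}, gives that $(Z,q)$ is linearly homeomorphic to $X/Y$. This already yields every assertion of the corollary except the last one.

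For the final claim, I would observe that $(Z,q)$ is a finite-dimensional asymmetric normed space satisfying $T_1$, hence by Proposition~\ref{norm} it is normable by $q^s$; since every finite-dimensional normed space is linearly homeomorphic to $\mathbb{R}^{\dim Z}$, the space $Z$ — and therefore $X/Y$ — is linearly homeomorphic to a Euclidean space.

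I do not expect a genuine obstacle; the argument is an assembly of results already proved. The one point requiring a little care is the choice of complement: it must be taken finite-dimensional, which is precisely where the hypothesis that $X/\left<{\theta_q}\right>$ is finite-dimensional enters, and the topological identification of $Z$ with $X/Y$ is supplied by Corollary~\ref{corol} (via Proposition~\ref{decomp}) rather than being something to reprove here.
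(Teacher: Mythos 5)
Your proposal is correct and follows exactly the route the paper intends: the paper states that Proposition~\ref{derecho} and Corollary~\ref{corol} "yield" the result, and your write-up simply fills in those same steps (closedness of $\left<{\theta_q}\right>$ from Proposition~\ref{derecho}, a finite-dimensional algebraic complement, Corollary~\ref{corol}/Proposition~\ref{decomp} for the decomposition, and Proposition~\ref{norm} for the Euclidean identification). No issues.
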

Since every finite-dimensional asymmetric normed space satisfies (1) and (2) from Proposition \ref{derecho}, we directly conclude the following.

\begin{corollary}\label{linh 2}
If $(X,q)$ is a right-bounded finite-dimensional asymmetric normed space, then $X$ is linearly homeomorphic to $\left<{\theta_q}\right>\times Z$, where $Z$ is a Euclidean subspace of $(X,q) $  linearly homeomorphic to $X/\left<{\theta_q}\right>$.
\end{corollary}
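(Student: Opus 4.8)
The plan is to deduce this corollary directly from Corollary~\ref{linh} by verifying its hypotheses in the finite-dimensional setting. So let $(X,q)$ be a right-bounded finite-dimensional asymmetric normed space and set $Y=\left<{\theta_q}\right>$. First I would check condition (1) of Proposition~\ref{derecho}: since $X$ is finite-dimensional, every linear subspace is $q^s$-closed (all norms on a finite-dimensional space are equivalent, and finite-dimensional subspaces of a normed space are always closed), so in particular $Y$ is $q^s$-closed. Next, condition (2): on a finite-dimensional space $X\cong\mathbb R^n$ the norm $q^s$ is equivalent to the Euclidean norm, which is generated by the standard inner product and makes $X$ a (finite-dimensional, hence complete) Hilbert space. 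Thus both hypotheses of Proposition~\ref{derecho} hold.

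Finally I would observe that $X/Y$ is automatically finite-dimensional, being a quotient of the finite-dimensional space $X$; hence the finite-dimensionality hypothesis of Corollary~\ref{linh} is satisfied. Applying Corollary~\ref{linh} then gives that $X$ is linearly homeomorphic to $Y\times Z=\left<{\theta_q}\right>\times Z$, where $Z$ is a $T_1$ subspace of $(X,q)$ linearly homeomorphic to $X/Y$. Since $Z$ is finite-dimensional and $T_1$, Proposition~\ref{norm} (or the already-cited \cite[Corollary 11]{luis}) shows that $(Z,q)$ is normable by $q^s$, so $Z$ with its $q$-topology is just a finite-dimensional normed space, i.e.\ a Euclidean space. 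This yields exactly the asserted decomposition.

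There is essentially no obstacle here: the corollary is a specialization, and the only thing to be careful about is making sure each hypothesis of Proposition~\ref{derecho} and Corollary~\ref{linh} is genuinely met by a finite-dimensional space — the $q^s$-closedness of $Y$ and the existence of an equivalent Hilbert norm both being completely standard finite-dimensional facts, and the finite-dimensionality of $X/\left<{\theta_q}\right>$ being immediate. The identification of the finite-dimensional $T_1$ factor $Z$ with a Euclidean space is the one place where Proposition~\ref{norm} is invoked, but that is also routine.
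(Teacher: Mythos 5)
Your proposal is correct and follows exactly the paper's route: the paper also derives this corollary immediately from Corollary~\ref{linh} by noting that every finite-dimensional asymmetric normed space automatically satisfies conditions (1) and (2) of Proposition~\ref{derecho}. You merely spell out the standard finite-dimensional facts that the paper leaves implicit.
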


The following example shows that the condition of being right-bounded is essential in Corollaries  \ref{linh} and \ref{linh 2}, even in the finite-dimensional case.

\begin{example}
In $\mathbb{R}^2$, consider the asymmetric norms $q$ and $p$ defined as in Example \ref{parabola}. We have $\theta_p=\theta_q$, and since $p^s(x,y)=\max\{|x|,|y|\}$, it is easy to see that $$B_p(0,1)=B_{p^s}(0,1)+\theta_p,$$ so $(\mathbb{R}^2,p)$ is right-bounded. Corollary \ref{linh} implies that $(\mathbb{R}^2,p)$ is linearly homeomorphic to $\left<{\theta_p}\right>\times\mathbb{R}$, where $\mathbb{R}$ has its usual topology. 

On the other hand, $q$ is not right-bounded (See \cite[Example 2.7 and Proposition 2.9]{Conradie 2}) and $\left<{\theta_q}\right>\times\mathbb{R}=\left<{\theta_p}\right>\times\mathbb{R}$ is homeomorphic to $(\mathbb{R}^2,p)$, which is not homeomorphic to $(\mathbb{R}^2,q)$. Indeed, all open sets in $(\mathbb{R}^2,q)$ have $q$-closure equal to the whole space $\mathbb{R}^2$, while the  $p$-closure of the ball $B_p(0,1)$ is the set $\{(x,y):-1\le x\le 1\}\neq\mathbb{R}^2$.
\end{example}

\section{Covering dimension vs. algebraic dimension}

In this section, we will find the covering dimension of some asymmetric normed spaces. We refer the reader to \cite{pears} for more information about the theory of dimension. We emphasize that the definition of covering dimension that we will use (c.f. \cite{pears})  does not require any additional condition on the topological space, such as being Hausdorff or any other separation axiom.

The {\it order} of a family of nonempty subsets $\{A_i:i\in I\}$  is the maximum $n\in\mathbb{N}\cup\{0\}$ for which there exists $M\subseteq I$ with $|M|=n+1$ such that $\bigcap_{i\in M}A_i\neq\emptyset$. If there is no such $n$, then we say that the order of $\{A_i:i\in I\}$  is  $\infty$. The {\it covering dimension} of a topological space $X$ is the minimum $n\in\mathbb{N}\cup\{0\}$ such that every finite open cover of $X$ has an open refinement of order not exceeding $n$, or is $\infty$ if there is no such $n$. We will denote this number with $\dim X$. On the other hand, if $X$ is a linear space then $\dim_aX$ will denote the algebraic dimension, namely the cardinality of any basis of $X$. As we have done before, when we say that $X$ is finite-dimensional we understand that $\dim_aX<\infty$.
 
It is well known that for every finite-dimensional normed space $X$ the covering dimension coincides with its algebraic dimension. Furthermore, if $X$ is an infinite-dimensional normed space, then $\dim X=\infty$.
Thus, it is natural to ask what happens when we consider asymmetric normed spaces.
As expected, in this case the situation is rather different.  

For example, if we consider on  $\mathbb{R}^m$ the asymmetric norm $q:\mathbb{R}^m\to\mathbb{R}^+$ defined by $q(x_1,\ldots,x_m)=\max\{x_1^+,\ldots,x_m^+\}$, then it is not difficult to show that $(\mathbb{R}^m,q)$ has covering dimension zero. In fact, every finite open cover must have the whole space $\mathbb{R}^m$ between its elements. The essential reason for this is that $\theta_q$ is somehow big, meaning (in this case) that it contains nonempty $q$-open sets. We generalize this situation in the following lemmas.

\begin{lemma}\label{nonemp}
Let $(X,q)$ be an asymmetric normed space such that there exist $x\in X$ and $\epsilon>0$ with $B_q(x,\epsilon)\subseteq\theta_q$. If $\mathcal{U}$ is any finite $q$-open cover, then $X\in\mathcal{U}$. 
\end{lemma}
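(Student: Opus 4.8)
The plan is to isolate two elementary structural facts — one about $q$-open sets and one about $\theta_q$ — and then derive a contradiction from a hypothetical finite open cover none of whose members is $X$.

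First I would record that \emph{every $q$-open set $U$ is $\theta_q$-saturated}, meaning $U+\theta_q=U$: if $u\in U$, choose $\gamma>0$ with $B_q(u,\gamma)\subseteq U$, and observe that for every $v\in\theta_q$ one has $q((u+v)-u)=q(v)=0<\gamma$, so $u+v\in U$; the reverse inclusion is trivial since $0\in\theta_q$. Equivalently, $z\in U$ implies $z+\theta_q\subseteq U$. Second, using that $\theta_q$ is a convex cone together with the positive homogeneity of $q$, I would upgrade the hypothesis $B_q(x,\epsilon)\subseteq\theta_q$ to $B_q(\lambda x,\lambda\epsilon)\subseteq\theta_q$ for every $\lambda>0$: if $q(y-\lambda x)<\lambda\epsilon$ then $q(\lambda^{-1}y-x)=\lambda^{-1}q(y-\lambda x)<\epsilon$, hence $\lambda^{-1}y\in\theta_q$ and therefore $y\in\theta_q$.

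With these in hand, suppose $\mathcal U=\{U_1,\dots,U_n\}$ is a finite $q$-open cover with $U_i\ne X$ for every $i$, and pick $p_i\in X\setminus U_i$. Since the cover is finite we may choose $\lambda>0$ with $\lambda\epsilon>q(p_i)$ for all $i$; set $z=-\lambda x$. Then for each $i$ we have $q\big((p_i-z)-\lambda x\big)=q(p_i)<\lambda\epsilon$, so $p_i-z\in B_q(\lambda x,\lambda\epsilon)\subseteq\theta_q$, i.e.\ $p_i\in z+\theta_q$. As $\mathcal U$ covers $X$, some $U_j$ contains $z$, whence $z+\theta_q\subseteq U_j$ by the saturation property and therefore $p_j\in U_j$, contradicting $p_j\notin U_j$. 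Thus $X\in\mathcal U$. The argument is short; the only point needing care is the simultaneous bookkeeping in the last step — translating and rescaling so that a single witness $z=-\lambda x$ pushes all the finitely many $p_i$ into $\theta_q$ at once — and this is precisely where the finiteness of the cover is used.
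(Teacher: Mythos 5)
Your proof is correct. Both of your preliminary facts are sound: the saturation property $U+\theta_q=U$ for $q$-open $U$ follows exactly as you say from $q(v)=0$ for $v\in\theta_q$, and the scaling $B_q(\lambda x,\lambda\epsilon)\subseteq\theta_q$ follows from positive homogeneity of $q$ together with $\theta_q$ being a cone. The final bookkeeping step also checks out: $(p_i-z)-\lambda x=p_i$, so $q\bigl((p_i-z)-\lambda x\bigr)=q(p_i)<\lambda\epsilon$ places every $p_i$ in $z+\theta_q$, and whichever member of the cover contains $z$ must then contain its own excluded witness.

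The underlying mechanism is the same as in the paper --- points far in the $-x$ direction have $\theta_q$-translates that swallow any prescribed point, and open sets absorb $\theta_q$ --- but your execution differs in two ways. The paper argues directly: it applies a pigeonhole argument to the sequence $(-nx)_{n\in\mathbb N}$ to fix a single $U\in\mathcal U$ containing infinitely many of these points, and then, for an arbitrary $y$, runs an induction showing $q(\epsilon y+(m+k)x)=0$ for all $k$, starting from $\frac{\epsilon y}{m}+x\in B_q(x,\epsilon)\subseteq\theta_q$; this forces $\epsilon y\in U$ and hence $U=X$. You instead argue by contradiction with one witness $p_i$ per cover element, and you replace the induction by the cleaner observation that homogeneity scales the hypothesis up to $B_q(\lambda x,\lambda\epsilon)\subseteq\theta_q$, so a single $z=-\lambda x$ with $\lambda$ large handles all the witnesses at once. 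Your route isolates the two reusable facts (saturation and scaling) and avoids the inductive bookkeeping, which makes it somewhat more transparent; the paper's direct version has the minor advantage of exhibiting explicitly which element of the cover equals $X$, namely any one containing $-nx$ for arbitrarily large $n$.
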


\begin{proof}
Since $-nx\in X$ for every $n\in\mathbb{N}$ there must exist an infinite sequence of natural numbers $(n_j)_{j\in\mathbb{N}}$ and $U\in\mathcal{U}$ such that $-n_jx\in U$ for every $j\in\mathbb{N}$. Let $r_j>0$ be such that $B_q(-n_jx,r_j)\subseteq U$ for every $j\in\mathbb{N}$. Fix any $y\in X$, and let $m\in\mathbb{N}$ with $m>2q(y)$. It is easy to see that $$\frac{\epsilon y}{m}+x\in B_q(x,\epsilon).$$

Since $B_q(x,\epsilon)\subseteq\theta_q$ it follows that $q(\epsilon y+mx)=0$. This also implies $$0=q(\epsilon y+mx)=q(\epsilon y+(m+1)x-x),$$ so $\epsilon y+(m+1)x\in B_q(x,\epsilon)\subseteq\theta_q$ and then $q(\epsilon y+(m+1)x)=0$. By induction we can see that $q(\epsilon y+(m+k)x)=0$ for each $k\in\mathbb{N}$. Pick $j\in\mathbb{N}$ with $n_j>m$, so in particular for $k=n_j-m$ we obtain $q(\epsilon y+n_jx)=0$. This means $$\epsilon y\in B_q(-n_jx,r_j)\subseteq U.$$

Since $y$ was arbitrary, we obtain $X=U$.
\end{proof}

We will show now that Lemma \ref{nonemp} still holds if we only suppose that $B_{q^s}(x,\epsilon)\subseteq\theta_q$ (a weaker condition, because we always have $B_{q^s}(x,\epsilon)\subseteq B_{q}(x,\epsilon)$). 

\begin{lemma}\label{nonempdos}
Let $(X,q)$ be an asymmetric normed space such that there exist $x\in X$ and $\epsilon>0$ with $B_{q^s}(x,\epsilon)\subseteq\theta_q$. If $\mathcal{U}$ is any finite $q$-open cover, then $X\in\mathcal{U}$. 
\end{lemma}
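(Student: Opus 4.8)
The plan is to follow the proof of Lemma~\ref{nonemp} almost verbatim, the only difference being that the hypothesis now merely provides a \emph{symmetric} ball inside $\theta_q$, so the step where the original argument re-enters a $q$-ball has to be replaced by a purely algebraic one. First I would observe that we may assume $x\neq 0$: if $x=0$, then $B_{q^s}(0,\epsilon)\subseteq\theta_q$ together with the positive homogeneity of $q$ forces $q\equiv 0$, whence $X=\{0\}$ by the third axiom of an asymmetric norm, and the conclusion is immediate. Assuming $x\neq 0$, the vectors $-nx$ ($n\in\mathbb N$) are pairwise distinct, so, since $\mathcal U$ is finite, there exist $U\in\mathcal U$ and an infinite sequence of indices $(n_j)_{j\in\mathbb N}$ with $-n_jx\in U$ for every $j\in\mathbb N$; fix $r_j>0$ with $B_q(-n_jx,r_j)\subseteq U$.

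Next I would show that $\epsilon y\in U$ for every $y\in X$; since $y\mapsto\epsilon y$ is a bijection of $X$ onto itself, this forces $U=X$ and finishes the proof. Fix $y\in X$ and choose an integer $m>q^s(y)$. Then $q^s\!\left(\tfrac{\epsilon}{m}y\right)=\tfrac{\epsilon}{m}q^s(y)<\epsilon$, so $x+\tfrac{\epsilon}{m}y\in B_{q^s}(x,\epsilon)\subseteq\theta_q$; multiplying by $m$ and using that $\theta_q$ is a cone yields $mx+\epsilon y\in\theta_q$. Now, since $x\in B_{q^s}(x,\epsilon)\subseteq\theta_q$ and $\theta_q$ is closed under addition, an easy induction on $k$ gives $(m+k)x+\epsilon y\in\theta_q$ for every $k\ge 0$. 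Picking $j$ with $n_j>m$ and setting $k=n_j-m$, we get $q(n_jx+\epsilon y)=0$, i.e. $q\big(\epsilon y-(-n_jx)\big)=0<r_j$, so $\epsilon y\in B_q(-n_jx,r_j)\subseteq U$, as wanted.

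The point where the weaker hypothesis could cause trouble --- and hence the step I would treat most carefully --- is the induction $(m+k)x+\epsilon y\in\theta_q\Rightarrow(m+k+1)x+\epsilon y\in\theta_q$. In Lemma~\ref{nonemp} this was handled by noting that $(m+k+1)x+\epsilon y$ differs from $x$ by the null vector $(m+k)x+\epsilon y$ and therefore lies in $B_q(x,\epsilon)$; that route is not available here, since only $B_{q^s}(x,\epsilon)$ --- not $B_q(x,\epsilon)$ --- is known to sit inside $\theta_q$, and there is no reason for $q\big(-((m+k)x+\epsilon y)\big)$ to be small. Replacing this by the observation that $\theta_q$ is closed under addition and contains $x$ bypasses the difficulty entirely, and is in fact a little shorter than the original argument.
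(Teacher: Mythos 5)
Your proof is correct, but it takes a genuinely different route from the paper's. The paper does not re-run the argument of Lemma~\ref{nonemp}; instead it reduces to that lemma by changing the norm: it invokes the auxiliary asymmetric norm $p$ with $B_p(0,1)=B_{q^s}(0,1)+\theta_q$ (from Conradie's work), notes that $\theta_p=\theta_q$ and that the $p$-topology is finer than the $q$-topology, so $\mathcal U$ is still a finite $p$-open cover and $B_p(x,\epsilon)\subseteq\theta_q+\theta_q=\theta_p$, and then applies Lemma~\ref{nonemp} to $(X,p)$. You instead repair the one step of the original argument that breaks under the weaker hypothesis: you correctly identify that the inductive step ``$q(\epsilon y+(m+k)x)=0$ puts $\epsilon y+(m+k+1)x$ back in $B_q(x,\epsilon)$'' is unavailable when only the symmetric ball is known to lie in $\theta_q$, and you replace it by the purely algebraic observation that $x\in B_{q^s}(x,\epsilon)\subseteq\theta_q$ and $mx+\epsilon y\in\theta_q$ (cone property applied to $x+\tfrac{\epsilon}{m}y\in B_{q^s}(x,\epsilon)$), so that $(m+k)x+\epsilon y\in\theta_q$ for all $k\ge 0$ because $\theta_q$ is closed under addition. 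All steps check out, including the degenerate case $x=0$ (which indeed forces $X=\{0\}$, though one could also just note that the pigeonhole step never required the points $-nx$ to be distinct). What your approach buys is self-containment: it avoids the external citations to \cite{Conradie 2} and \cite{Jonard Sanchez 3} and in fact subsumes Lemma~\ref{nonemp} rather than relying on it; what the paper's approach buys is brevity, delegating all the work to the already-proved Lemma~\ref{nonemp} at the cost of importing the comparison norm $p$.
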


\begin{proof}
In \cite[Lemma 2.8]{Conradie 2}, it was shown that there is an asymmetric norm $p$ on $X$ (not necessarily equivalent to $q$) such that $$B_p(0,1)=B_{q^s}(0,1)+\theta_q.$$

Moreover, $\theta_p=\theta_q$ (see \cite[Lemma 2.6]{Jonard Sanchez 3}) and the topology induced by $p$ is finer than that induced by $q$ (\cite[Proposition 2.6]{Conradie 2}).
Hence, $\mathcal{U}$ is a finite $p$-open cover of $X$. Since $$B_p(x,\epsilon)=B_{q^s}(x,\epsilon)+\theta_q\subseteq \theta_q+\theta_q=\theta_p,$$then by Proposition \ref{nonemp} we conclude that $X\in\mathcal U$.
\end{proof}

\begin{corollary}\label{zerodim}
Let $(X,q)$ be an asymmetric normed space such that $\theta_q$ has nonempty $q^s$-interior. Then $\dim X=0$.
\end{corollary}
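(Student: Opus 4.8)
\textbf{Proof proposal for Corollary~\ref{zerodim}.}

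The plan is to reduce the statement to Lemma~\ref{nonempdos} and then observe that the conclusion of that lemma forces the covering dimension to be $0$. First I would unpack the hypothesis: saying that $\theta_q$ has nonempty $q^s$-interior means precisely that there exist a point $x\in X$ and a radius $\epsilon>0$ such that $B_{q^s}(x,\epsilon)\subseteq\theta_q$. This is exactly the hypothesis of Lemma~\ref{nonempdos}, so that lemma applies verbatim: every finite $q$-open cover $\mathcal{U}$ of $X$ must contain $X$ itself as one of its members.

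Next I would use this to bound the covering dimension. Let $\mathcal{U}$ be an arbitrary finite $q$-open cover of $X$. By Lemma~\ref{nonempdos}, $X\in\mathcal{U}$, so the singleton family $\{X\}$ is itself an open cover of $X$ which refines $\mathcal{U}$ (every element of $\{X\}$, namely $X$, is contained in a member of $\mathcal{U}$, namely $X$). The order of the one-element family $\{X\}$ is $0$, since no point lies in the intersection of two distinct members (there are no two distinct members). Hence every finite $q$-open cover of $X$ admits an open refinement of order not exceeding $0$, which by the definition of covering dimension recalled above gives $\dim X\le 0$, i.e.\ $\dim X=0$ (note $X\neq\emptyset$, so $\dim X\geq 0$).

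There is essentially no obstacle here: the entire content has already been done in Lemmas~\ref{nonemp} and~\ref{nonempdos}, and the only thing to check is the trivial observation that a finite open cover one of whose members is the whole space is refined by $\{X\}$, a family of order $0$. The one small point worth stating explicitly for the reader is why $\dim X = 0$ rather than $\dim X = -1$: the value $-1$ is reserved for the empty space, and here $X$ is a (nonzero, or at least nonempty) linear space, so $\dim X\geq 0$; combined with $\dim X\leq 0$ this yields equality.

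\begin{proof}
By hypothesis there exist $x\in X$ and $\epsilon>0$ such that $B_{q^s}(x,\epsilon)\subseteq\theta_q$. Let $\mathcal{U}$ be any finite $q$-open cover of $X$. By Lemma~\ref{nonempdos}, $X\in\mathcal{U}$. Then the one-element family $\{X\}$ is an open cover of $X$ that refines $\mathcal{U}$, and its order is $0$. Since $\mathcal{U}$ was an arbitrary finite open cover, $\dim X\le 0$. As $X\neq\emptyset$, we conclude $\dim X=0$.
\end{proof}
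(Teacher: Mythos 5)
Your proof is correct and follows exactly the paper's own argument: apply Lemma~\ref{nonempdos} to conclude $X\in\mathcal{U}$ for any finite $q$-open cover $\mathcal{U}$, and observe that $\{X\}$ is then a refinement of order $0$. The extra remark that $\dim X\geq 0$ because $X$ is nonempty is a harmless (and slightly more careful) addition to what the paper states.
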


\begin{proof}
If $\mathcal{U}$ is a finite $q$-open cover of $X$, then $X\in\mathcal{U}$ by Lemma \ref{nonempdos}. Therefore $\{X\}$ is an open refinement of $\mathcal{U}$ whose order is $0$.
\end{proof}

The following lemma is a direct consequence of \cite[Lemma 3.2]{Aliprantis}.

\begin{lemma}\label{geo}
Let $(X,q)$ be a finite-dimensional asymmetric normed space. Then $\theta_q$ spans the whole space $X$ if and only if $\theta_q$ has nonempty $q^s$-interior.
\end{lemma}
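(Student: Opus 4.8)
The plan is to prove the biconditional by showing each implication separately, using the cited result \cite[Lemma 3.2]{Aliprantis} as the engine. Recall that a convex cone $C$ in a finite-dimensional space has nonempty interior (with respect to the unique linear topology, which here is the $q^s$-topology since $X$ is finite-dimensional) if and only if $C$ spans the whole space as a linear subspace. The set $\theta_q$ is a convex cone (this was observed right after Proposition~\ref{haus}), so Lemma 3.2 of \cite{Aliprantis} applies directly once we identify the $q^s$-topology with the standard Euclidean topology on the finite-dimensional space $X$.

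First I would make explicit that, since $\dim_a X < \infty$, the norm $q^s$ is just an ordinary norm on a finite-dimensional vector space, hence $(X, q^s)$ is linearly homeomorphic to a Euclidean space $\mathbb{R}^n$ with $n = \dim_a X$; in particular the $q^s$-interior of a set coincides with its interior in any other linear topology on $X$. This is the standard fact that all norms on a finite-dimensional space are equivalent.

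For the implication ``$\theta_q$ spans $X$ $\Rightarrow$ $\theta_q$ has nonempty $q^s$-interior'': assuming $\langle \theta_q \rangle = X$, we can pick a basis $\{e_1, \dots, e_n\}$ of $X$ consisting of elements of $\theta_q$ (a spanning subset of a cone contains a spanning linearly independent subset). Then the set $\bigl\{\sum_{i=1}^n t_i e_i : t_i > 0\bigr\}$ is contained in $\theta_q$ because $\theta_q$ is closed under positive scalar multiples and under addition, and this set is $q^s$-open since it is the image of the open positive orthant under the linear homeomorphism $(t_1,\dots,t_n) \mapsto \sum t_i e_i$. Hence $\theta_q$ has nonempty $q^s$-interior. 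Conversely, if $\theta_q$ has nonempty $q^s$-interior, then it contains a $q^s$-open ball, which certainly spans $X$ (any open ball in a normed space spans the space), so $\langle \theta_q \rangle = X$; alternatively one invokes \cite[Lemma 3.2]{Aliprantis} directly in both directions.

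I do not expect a serious obstacle here: the only point requiring care is making sure the ``$q^s$-interior'' in the statement is interpreted as interior in the finite-dimensional linear topology, which is automatic once finite-dimensionality is used, and then the cited lemma does all the work. If one prefers not to cite \cite{Aliprantis} and give a self-contained argument, the spanning-basis argument above handles one direction and the open-ball-spans-the-space remark handles the other, so the whole proof fits in a few lines.
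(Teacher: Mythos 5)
Your proposal is correct and follows essentially the same route as the paper, which simply cites \cite[Lemma 3.2]{Aliprantis} (the fact that a convex cone in a finite-dimensional space has nonempty interior iff it spans the space) without further argument. Your self-contained version — choosing a basis of $X$ inside $\theta_q$ and observing that the open positive orthant it generates lies in $\theta_q$, together with the remark that an open ball spans the space — correctly fills in the details behind that citation.
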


The following characterization of the covering dimension is very useful \cite[Chapter 3, Proposition 1.2]{pears}.

\begin{proposition}\label{pears}
For any topological space $X$ the following statements are equivalent:
\begin{enumerate}[\rm(1)]
\item $\dim X\le n,$
\item If $\{U_1,\ldots,U_{n+2}\}$ is an open cover of $X$, there is an open cover $\{V_1,\ldots,V_{n+2}\}$ such that each $V_i\subseteq U_i$ and $\bigcap_{i=1}^{n+2}V_i=\emptyset$.
\end{enumerate}
\end{proposition}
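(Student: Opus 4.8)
\textbf{Proof proposal for Proposition~\ref{pears}.}

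The plan is to prove the two implications separately, the harder one being $(2)\Rightarrow(1)$. Throughout I will use that a family $\{A_i\}_{i\in I}$ has order at most $n$ exactly when $\bigcap_{i\in M}A_i=\emptyset$ for every $M\subseteq I$ with $|M|=n+2$, and that a \emph{shrinking} of a cover $\{U_i\}_{i\in I}$ is a cover $\{V_i\}_{i\in I}$ with the same index set and $V_i\subseteq U_i$ for all $i$; note that every shrinking is in particular a refinement, and that shrinking a family can never enlarge an intersection $\bigcap_{i\in M}A_i$, so it can never turn an empty such intersection into a nonempty one.

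For $(1)\Rightarrow(2)$, let $\{U_1,\dots,U_{n+2}\}$ be an open cover of $X$. By $(1)$ there is an open refinement $\mathcal W$ of order at most $n$. For each $W\in\mathcal W$ choose an index $\iota(W)\in\{1,\dots,n+2\}$ with $W\subseteq U_{\iota(W)}$, and set $V_i=\bigcup\{W\in\mathcal W:\iota(W)=i\}$. Then each $V_i$ is open with $V_i\subseteq U_i$, and $\{V_1,\dots,V_{n+2}\}$ still covers $X$. If some point lay in $\bigcap_{i=1}^{n+2}V_i$, we could choose for each $i$ a set $W_i\in\mathcal W$ with $\iota(W_i)=i$ containing it; the $W_i$ would be pairwise distinct (their $\iota$-values differ), so the point would lie in $\bigcap_{i=1}^{n+2}W_i$, contradicting that $\mathcal W$ has order at most $n$. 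Hence $\bigcap_{i=1}^{n+2}V_i=\emptyset$, which is $(2)$.

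For $(2)\Rightarrow(1)$, fix an arbitrary finite open cover $\{U_1,\dots,U_k\}$; we must produce an open refinement of order at most $n$. If $k\le n+1$ the cover itself has order at most $k-1\le n$, so assume $k\ge n+2$ and enumerate the $(n+2)$-element subsets of $\{1,\dots,k\}$ as $S_1,\dots,S_N$. Starting from $\mathcal G^0=\{U_1,\dots,U_k\}$, I would build a chain of shrinkings $\mathcal G^0\supseteq\mathcal G^1\supseteq\cdots\supseteq\mathcal G^N$ (meaning $G^j_i\subseteq G^{j-1}_i$ for every $i$) such that $\bigcap_{i\in S_\ell}G^j_i=\emptyset$ for all $\ell\le j$; by the remark above this property, once achieved, persists under further shrinking, so $\mathcal G^N$ has order at most $n$ and refines $\{U_1,\dots,U_k\}$, yielding $\dim X\le n$. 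To carry out step $j$, write $S_j=\{i_1,\dots,i_{n+2}\}$ and apply $(2)$ to the $(n+2)$-element open cover
$$\bigl\{\,G^{j-1}_{i_1},\ \dots,\ G^{j-1}_{i_{n+1}},\ G^{j-1}_{i_{n+2}}\cup\textstyle\bigcup_{i\notin S_j}G^{j-1}_i\,\bigr\}$$
to get a shrinking $\{V_1,\dots,V_{n+2}\}$ of it with $\bigcap_{m=1}^{n+2}V_m=\emptyset$. Then set $G^j_{i_m}=V_m$ for $m\le n+1$, set $G^j_{i_{n+2}}=V_{n+2}\cap G^{j-1}_{i_{n+2}}$, and keep $G^j_i=G^{j-1}_i$ for $i\notin S_j$. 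One checks directly that $\mathcal G^j$ is still a cover of $X$ (using $V_{n+2}\subseteq G^{j-1}_{i_{n+2}}\cup\bigcup_{i\notin S_j}G^{j-1}_i$), that it is a shrinking of $\mathcal G^{j-1}$, and that $\bigcap_{i\in S_j}G^j_i\subseteq\bigcap_{m=1}^{n+2}V_m=\emptyset$, completing the step.

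I expect the main obstacle to be the bookkeeping in $(2)\Rightarrow(1)$: arranging the auxiliary $(n+2)$-element cover so that applying $(2)$ annihilates exactly the intersection over $S_j$ while the indices outside $S_j$ are harmlessly absorbed into the last member, and then verifying that the updated family is simultaneously a cover, a shrinking of the previous one, and preserves every intersection emptied at earlier stages. The remaining pieces — the amalgamation argument for $(1)\Rightarrow(2)$ and the trivial case $k\le n+1$ — are routine.
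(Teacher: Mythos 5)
The paper does not prove this proposition at all: it is quoted verbatim from Pears, \emph{Dimension Theory of General Spaces}, Chapter 3, Proposition 1.2, so there is no in-paper argument to compare against. Your proof is correct and is essentially the standard textbook argument: the amalgamation of a low-order refinement along a choice function $\iota$ for $(1)\Rightarrow(2)$, and for $(2)\Rightarrow(1)$ the iterated shrinking over all $(n+2)$-element index sets, using that emptiness of an intersection persists under further shrinking; the handling of the case $k\le n+1$ and the absorption of the indices outside $S_j$ into the last member of the auxiliary cover are both done correctly.
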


\begin{lemma}\label{cone}
Let $(X,q)$ be a finite-dimensional asymmetric normed space, with $0\neq Y=\left<{\theta_q}\right>\neq X$. Let $\left\|{\cdot}\right\|$ be a norm on $X$ induced by a fixed inner product, and suppose that $X=Y\oplus{Z}$, where $Z=Y^{\perp}$ is the orthogonal complement of $Y$ with respect to the inner product. Let $C$ and $\{z_j:j\in\mathbb{N}\}$ be subsets of $Z$ such that $z_j\notin C$ for every $j\in\mathbb{N}$. Then there exists an open subset $U$ of $X$ with the following two properties:
\begin{enumerate}[\rm(1)]
\item $Y+C\subseteq U,$
\item $Y+z_j$ is not contained in $U$ for every $j\in\mathbb{N}$.
\end{enumerate}
\end{lemma}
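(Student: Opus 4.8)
The plan is to build the open set $U$ by taking, around the slice $Y + C$, a union of balls whose radii shrink along the $Z$-direction, so that the set stays away from each forbidden slice $Y + z_j$. The key structural fact I would exploit is that $Y = \langle\theta_q\rangle$, so by Lemma~\ref{geo} (applied in the finite-dimensional setting) $\theta_q$ has nonempty $q^s$-interior in $X$, and in fact $\theta_q$ is a convex cone with $\langle\theta_q\rangle = Y$; this forces $\theta_q$ to contain a $q^s$-ball of $Y$ around the origin, hence (by homogeneity and additivity of the cone) $Y \subseteq \theta_q + \theta_q \cdot(\text{large multiples})$, i.e. $q$ vanishes on large portions of $Y$. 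Concretely, I expect to show: there is $\delta > 0$ such that for every $y \in Y$ there exists $y' \in Y$ with $q(y - y') = 0$ and, more usefully, that the $q$-open set $B_q(y,\varepsilon)$ swallows an entire translate $Y + w$ whenever $w$ is close to $y$ in the $Z$-component — this is the asymmetric phenomenon behind Lemma~\ref{nonemp}.

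First I would fix the inner-product norm $\|\cdot\|$ and recall that since $Z$ is finite-dimensional, the $Z$-component map $x \mapsto P_Z(x)$ is $q^s$-continuous, and $q$ restricted to $Z$ is equivalent to $\|\cdot\|$ on $Z$ because $(Z,q)$ is $T_1$ by Lemma~\ref{zeta} (as $Y \cap Z = \{0\}$) and finite-dimensional, hence normable by $q^s$. So on $Z$ the $q$-topology is the Euclidean one. Next, for each point $c \in C$, using that $z_j \notin C$, I would like to say $C$ is "avoidable" — but $C$ need not be closed, so I cannot simply take a ball around $c$ missing all $z_j$. Instead I would work slice-by-slice over $Z$: define, for each $c \in C$,
\[
r(c) = \tfrac{1}{2}\min\Bigl\{1,\ \inf_{j}\|c - z_j\|_{\text{along }Z}\Bigr\},
\]
wait — that infimum can be $0$. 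The correct move, and the one I would actually carry out, is to set $U = \bigcup_{c\in C}\bigl(Y + B_q(c,\rho(c))\bigr)$ where $\rho(c)$ is chosen small enough that $B_q(c,\rho(c)) \cap \{z_j : j\in\mathbb{N}\}$, viewed in $Z$, misses those $z_j$ lying in $C$'s closure only through... Let me restate: I would take $\rho(c)$ so small that the Euclidean ball of radius $\rho(c)$ about $c$ in $Z$ contains no $z_j$ with $z_j \neq c$; since $z_j \notin C$ this excludes every $z_j$, so $z_j \notin B_q(c,\rho(c))$ for all $c,j$. Then $Y + C \subseteq U$ is immediate, and I must check $Y + z_j \not\subseteq U$: if $y + z_j \in U$ then $y + z_j \in Y + B_q(c,\rho(c))$ for some $c$, so $z_j - c \in Y$ after projecting, but $z_j, c \in Z$ and $Y \cap Z = \{0\}$ force $z_j = c$, contradiction. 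Openness of $U$ is clear as a union of $q$-open sets (translates of open balls by elements of $Y$ are $q$-open since translations are homeomorphisms).

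The point I expect to be delicate — and where the hypothesis $Y = \langle\theta_q\rangle \neq 0$ is genuinely used — is verifying that $Y + B_q(c,\rho(c))$ is actually open and, more importantly, that this construction is consistent with how $U$ will be used downstream (the lemma is evidently a tool for a lower-bound argument on $\dim X$ via Proposition~\ref{pears}). In the write-up I would make sure to note that $Y + B_q(c,\rho) = \bigcup_{y\in Y} B_q(c+y,\rho)$ is $q$-open, and that the separation $z_j \notin B_q(c,\rho(c))$ really only needs the $Z$-component estimate together with $Y\cap Z=\{0\}$; the role of $\theta_q$ spanning $Y$ is that it guarantees $U$ (though built from small balls) is "large" in the $Y$-direction, which is what a subsequent application will need. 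The main obstacle is resisting the temptation to choose $\rho(c)$ uniformly: it genuinely must depend on $c$ since $C$ may accumulate at points of $\{z_j\}$, and one has to be careful that a variable radius still yields an open set — which it does, since $U$ is a union, not an intersection.
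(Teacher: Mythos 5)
There is a genuine gap, and it is fatal to the construction. You choose $\rho(c)>0$ so small that the Euclidean ball of radius $\rho(c)$ about $c$ in $Z$ contains no $z_j$; but the hypotheses only say $z_j\notin C$, which does not prevent the sequence $(z_j)$ from accumulating at $c$ (take $Z=\mathbb{R}$, $C=\{0\}$, $z_j=1/j$). In that case $\inf_j\|c-z_j\|=0$ and no such $\rho(c)$ exists --- you even flagged this infimum as possibly zero and then proceeded with a construction that requires it to be positive. Worse, this is exactly the situation in the lemma's intended application (Theorem~\ref{infin}), where $C=Z_k$ and $\{z_j^{(i)}\}$ are \emph{both dense} in $Z$. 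In that setting the stronger conclusion you aim for, namely $z_j\notin U$ for all $j$, is outright impossible: any $q$-open $U\supseteq C$ meets $Z$ in a Euclidean-open set containing the dense set $C$, hence contains some $z_j$. The lemma only asserts the weaker statement that the \emph{whole slice} $Y+z_j$ is not contained in $U$, and that is all one can hope for.

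The paper's proof is built around precisely this distinction. It fixes $y=\sum y_i\in\theta_q$ (nonzero by Lemma~\ref{suma}) and covers each slice $Y+c$ by balls $B_{n,c}=B_q(-ny+c,r_{n,c})$ centered far out in the $-y$ direction; since $q(ny)=0$, these balls swallow $Y+c$ as $n\to\infty$ (Claim I), so $U=\bigcup_{c,n}B_{n,c}$ contains $Y+C$. The radii $r_{n,c}$ are chosen so that (a) they are small relative to $N\|y\|$ and (b) the diameter of $B_{n,c}\cap Z$ is less than $\min\{q(z_j-c):j\le n\}$; condition (a) forces $j\le n$ whenever $-jy+z_j\in B_{n,c}$, and then condition (b) yields a contradiction, so the single witness point $-jy+z_j$ lies outside $U$. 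Your proposal contains no mechanism for excluding such a witness point from each slice; the only exclusion you attempt is of $z_j$ itself, which cannot work. The argument needs to be rebuilt along these lines rather than patched.
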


\begin{proof}
First, observe that by Lemma \ref{zeta}, the subspace $Z$ is $T_1$ and normable by $q^s$. Hence $q(z_j-c)>0$ for every $j\in\mathbb{N}$ and $c\in C$, because $z_j-c\in Z\setminus\{0\}$.

Since $X$ is finite-dimensional, we can find $N>0$ such that 
$$N\left\|{x}\right\|\le q^s(x)$$
for every $x\in X$. Following the same arguments used in   the proof of Proposition~\ref{derecho}, we get that 
\begin{equation}\label{cinco}
N\left\|{y}\right\|\le q^s(y+z).
\end{equation}
 every $y\in Y$ and $z\in Z$ (c.f. inequality (\ref{d:norma y qs}) of Proposition~\ref{derecho}).
Since $(Z,q^{-})$ is also normable by $q^s$, then $q$ and $q^-$ are equivalent, so there exists $M>0$ such that
\begin{equation}\label{uno}
Mq(-z)\le q(z),\; \text{ for every  }z\in Z.
\end{equation}

Let $\{y_1,\ldots,y_m\}\subseteq\theta_q$ be a linear basis of $Y$. Define $y=\sum_{i=1}^my_i\in\theta_q$. By Lemma \ref{suma} $y\neq 0$. For every $z\in Z$, $n\in\mathbb{N}$ and $r>0$, consider $$I_{n,z,r}=B_q(-ny+z,r)\cap Z.$$
The sets $I_{n,z,r}$ are open in the Euclidean space $Z$. Clearly, if $r<r'$ then $I_{n,z,r}\subseteq I_{n,z,r'}$. Note that $$\bigcap_{r>0}B_q(-ny+z,r)=\theta_q+(-ny+z),$$ so it follows that $$\bigcap_{r>0}I_{n,z,r}=\{z\}.$$

Denote the diameter of each $I_{n,z,r}$ with respect to the norm $q^s$ by $D(I_{n,z,r})$; namely $D(I_{n,z,r})=\sup\{q^s(z_1-z_2):z_1,z_2\in I_{n,z,r}\}$. Therefore

$$\lim_{r\to 0^+}D(I_{n,z,r})=0.$$

We will use this fact to construct the desired set $U$ as follows. For each $n\in\mathbb{N}$ and $c\in C$, choose $r_{n,c}>0$ such that
\begin{enumerate}[(a)]
\item $\max\{r_{n,c},r_{n,c}/M\}<N\left\|{y}\right\|$. 
\item $D(I_{n,c,r_{n,c}})<\min\{q(z_j-c):j\le n\}.$ 
\end{enumerate}

Let $B_{n,c}=B_q(-ny+c,r_{n,c})$.

{\it Claim I:} $Y+c\subseteq\bigcup_{n\in\mathbb{N}}B_{n,c}$.

Indeed, an element in $Y+c$ has the form $y_0+c$, where $y_0=\sum_{i=1}^mt_iy_i$ for some $t_1,\ldots,t_m\in\mathbb{R}$. Let $n\in\mathbb{N}$ be such that $t_i+n>0$ for all $i=1,\ldots,m$. Then $$q(y_0+c-(-ny+c))=q(y_0+ny)\le \sum_{i=1}^m(t_i+n)q(y_i)=0<r_{n,c}.$$
Hence $y_0+c\in B_{n,c}$. This proves $Y+c\subseteq\bigcup_{n\in\mathbb{N}}B_{n,c}$, as desired.

Let $$U=\bigcup_{c\in C}\bigcup_{n\in\mathbb{N}}B_{n,c}.$$ It follows from Claim I that $Y+C\subseteq U$. 

Fix $j\in\mathbb{N}$. In order to prove that $Y+z_j$ is not contained in $U$, it suffices to show that $-jy+z_j\notin{U}$. 

Suppose this is not the case, so let us assume that there exist $n\in\mathbb{N}$ and $c\in C$ such that $-jy+z_j\in B_{n,c}$. This means that
\begin{equation}
q(-jy+z_j-(-ny+c))=q((-j+n)y+z_j-c)<r_{n,c}.
\end{equation}
Let us call $w=(-j+n)y+z_j-c$, so $q(w)<r_{n,c}$.

{\it Claim II:} $j\le n$.

Assume that the opposite is true, so $j-n>0$. In particular, $(j-n)y\in\theta_q$. By using the inequality given in Lemma \ref{suma}, we obtain
\begin{equation}\label{dos}
q(w)=q((-j+n)y+z_j-c)\ge q(z_j-c)-q((j-n)y)=q(z_j-c).
\end{equation}

On the other hand,
\begin{equation}\label{tres}
q(-w)=q((j-n)y-z_j+c)\le q((j-n)y)+q(-z_j+c)=q(-z_j+c).
\end{equation}

By (\ref{uno}), we have $Mq(-z_j+c)\le q(z_j-c)$. Hence, by (\ref{dos}) and (\ref{tres}) we conclude that $Mq(-w)\le q(w)<r_{n,c}$. Therefore, by (\ref{cinco}) and the definition of $r_{n,c}$, 
$$N|-j+n|\left\|{y}\right\|=N\left\|{(-j+n)y}\right\|\le q^s(w)<\max\{r_{n,c},r_{n,c}/M\}<N\left\|{y}\right\|.$$
This implies that $|-j+n|<1$, a contradiction since $j\neq n$. Thus the Claim II is proved.

Note that $z_j\in B_{n,c}$, because $$q(z_j-(-ny+c))=q(z_j+ny-c-jy+jy)\le q(w)+q(jy)=q(w)<r_{n,c}.$$

Analogously, $c\in B_{n,c}$, because $$q(c-(-ny+c))=q(ny)=0<r_{n,c}.$$

We conclude that $c,z_j\in I_{n,c,r_{n,c}}$. But by Claim II $j\le n$, so part (b) of  the definition of $r_{n,c}$ implies that $$q(z_j-c)\le q^s(z_j-c)\le D(I_{n,c,r_{n,c}})<q(z_j-c).$$

This contradiction shows that $-jn+z_j$ cannot be in $B_{n,z}$, and now the proof is complete.
\end{proof}

\begin{lemma}\label{reban}
Let $(X,q)$ be an asymmetric normed space and $Y=\left<{\theta_q}\right>$. Assume that $Z\subset X$ is a linear subspace such that $X=Y+Z$. If $Y$ is finite-dimensional then for every  finite open cover  $\mathcal{U}$ of $X$ and  every $z\in Z$ there exists $U\in\mathcal{U}$ such that $Y+z\subseteq U$.
\end{lemma}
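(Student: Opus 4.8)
The plan is to run the same pigeonhole idea used in the proof of Lemma~\ref{nonemp}: translating a coset $Y+z$ by a large multiple of a well-chosen element of $\theta_q$ keeps it inside an arbitrarily small $q$-ball, and since the cover $\mathcal U$ is finite, infinitely many of these translates must land in one common member of $\mathcal U$.

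First I would fix a linear basis $\{y_1,\dots,y_m\}\subseteq\theta_q$ of $Y$, which exists because $\theta_q$ spans the finite-dimensional space $Y$, so a basis can be extracted from it. Put $y=y_1+\dots+y_m$. The key elementary observation is that for any scalars $t_1,\dots,t_m\in\mathbb R$ and any $n\in\mathbb N$ with $t_i+n\ge 0$ for all $i$, positive homogeneity and subadditivity of $q$ give $q\bigl(\sum_{i=1}^m(t_i+n)y_i\bigr)\le\sum_{i=1}^m(t_i+n)q(y_i)=0$, since each $y_i\in\theta_q$.

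Now fix $z\in Z$ and a finite $q$-open cover $\mathcal U$ of $X$. Each of the points $-ny+z$ ($n\in\mathbb N$) lies in some member of $\mathcal U$; as $\mathcal U$ is finite, there is a single $U\in\mathcal U$ and an infinite (hence unbounded) set $N_0\subseteq\mathbb N$ with $-ny+z\in U$ for all $n\in N_0$, and for each such $n$ we may choose $r_n>0$ with $B_q(-ny+z,r_n)\subseteq U$. To see $Y+z\subseteq U$, take $y_0+z\in Y+z$ with $y_0=\sum_{i=1}^m t_iy_i$ and pick $n\in N_0$ with $n>\max_{1\le i\le m}|t_i|$, so that $t_i+n>0$ for every $i$. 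Then
$$q\bigl((y_0+z)-(-ny+z)\bigr)=q(y_0+ny)=q\Bigl(\sum_{i=1}^m(t_i+n)y_i\Bigr)\le\sum_{i=1}^m(t_i+n)q(y_i)=0<r_n,$$
so $y_0+z\in B_q(-ny+z,r_n)\subseteq U$. Since $y_0\in Y$ was arbitrary, $Y+z\subseteq U$.

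I do not expect any real obstacle here; the only points needing a little care are that the basis of $Y$ can be chosen inside $\theta_q$ (so the basis vectors are $q$-null), and that the pigeonhole set $N_0$ is unbounded, which is what lets a single translate $-ny$ absorb every coordinate of a given $y_0$. It is worth noting that the member $U$ may depend on $z$, which is exactly what the statement allows.
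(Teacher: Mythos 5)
Your proof is correct and follows essentially the same argument as the paper: choose a basis of $Y$ inside $\theta_q$, pigeonhole the translates $-ny+z$ into a single member $U$ of the finite cover along an unbounded set of indices, and then absorb any $y_0+z$ into a ball around a suitable translate using $q(y_i)=0$. No gaps.
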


\begin{proof}
Take a basis $\{y_1,\ldots,y_k\}\subseteq\theta_q$ of $Y$. For every $n\in\mathbb{N}$ consider $$x_n=-\sum_{i=1}^kny_i+z.$$

Since the cover $\mathcal{U}$ is finite, there exists $U\in\mathcal{U}$ containing infinitely many elements of $\{x_n:n\in\mathbb{N}\}$. Let $(n_j)_{j\in\mathbb{N}}$ be an increasing sequence of natural numbers such that $x_{n_j}\in U$ for every $j\in\mathbb{N}$. If $j\in\mathbb{N}$, since $U$ is open there exists $\epsilon_j>0$ such that $$B_q\bigg(-\sum_{i=1}^kn_jy_i+z,\epsilon_j\bigg)\subseteq U.$$ 

Let $y+z\in X$. So $y=\sum_{i=1}^kt_iy_i$ for some $t_1,\ldots,t_k\in\mathbb{R}$. Let $j\in\mathbb{N}$ such that $t_i+n_j>0$ for every $i\in\{1,\ldots,k\}$. Then 
$$q\bigg(\sum_{i=1}^kt_iy_i+z+\sum_{i=1}^kn_jy_i-z\bigg)\le\sum_{i=1}^k(t_i+n_j)q(y_i)=0.$$

So $$y+z=\sum_{i=1}^kt_iy_i+z\in B_q\bigg(-\sum_{i=1}^kn_jy_i+z,\epsilon_j\bigg)\subseteq U.$$

Hence $Y+z\subseteq U$.
\end{proof}

\begin{theorem}\label{infin}
Let $(X,q)$ be a finite-dimensional asymmetric normed  space such that $0\neq Y=\left<{\theta_q}\right>\neq X$. Then $\dim X=\infty$.  
\end{theorem}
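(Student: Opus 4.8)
The plan is to show that no matter how large $n$ we fix, the covering dimension exceeds $n$, by exhibiting a finite open cover of $X$ no open refinement of which has small order. I would split $X = Y \oplus Z$ with $Z = Y^\perp$ the orthogonal complement with respect to some fixed inner product on $X$ (as in Lemma~\ref{cone}). By Lemma~\ref{zeta}, $Z$ is $T_1$ and normable by $q^s$, so $(Z,q)$ is linearly homeomorphic to a Euclidean space of dimension $k := \dim_a Z = \dim_a X - \dim_a Y \ge 1$. The key point is the "thickening" mechanism of Lemma~\ref{reban}: every $U$ in a finite open cover of $X$ that meets a coset $Y+z$ actually contains the whole coset $Y+z$. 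Thus a finite open cover of $X$ induces, via $z \mapsto \{U : Y+z \subseteq U\}$, a finite open cover of $Z$ (openness in $Z$ follows because $Z$ has the subspace topology and cosets are translates), and conversely finite open covers of $Z$ can be "inflated" to $X$ using Lemma~\ref{cone}.

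Here is the order of the steps. First I would fix $n \in \mathbb N$ and aim to build a finite open cover of $X$ witnessing $\dim X > n$; it suffices to produce an open cover $\{U_1,\dots,U_{n+2}\}$ such that for every open cover $\{V_1,\dots,V_{n+2}\}$ with $V_i \subseteq U_i$ one has $\bigcap_{i=1}^{n+2} V_i \ne \emptyset$ (the contrapositive of Proposition~\ref{pears}). Second, since $Z$ is Euclidean of dimension $k \ge 1$, I can choose a point $z_0 \in Z$ and a finite open cover $\{W_1,\dots,W_{n+2}\}$ of $Z$ such that $z_0 \in \bigcap_i W_i$ but no refinement $\{W_1',\dots,W_{n+2}'\}$ with $W_i' \subseteq W_i$ avoids a common point near $z_0$ — concretely, I would take a small Euclidean simplex-type cover around $z_0$ exploiting that $\dim Z = k \ge 1 \ge$ (the relevant lower bound we need is just $\dim Z \ge 1$, which already forces the Lebesgue-type covering obstruction to persist). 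Actually the cleanest route: by the standard theory $\dim Z = k$, so there is a finite open cover of $Z$ with no open refinement of order $< k$; pull it back. Third, I would use Lemma~\ref{cone} to produce, for each member $W_i$ of the cover of $Z$ and each relevant "bad" coset, an open $U_i \subseteq X$ with $Y + W_i \subseteq U_i$ (working $W_i$ as a union of the sets $C$ in the lemma) while $Y + z_j \not\subseteq U_i$ for the countably many $z_j \notin W_i$ that we need to exclude — this realizes the cover of $Z$ faithfully as a cover of $X$ whose trace back on $Z$ via Lemma~\ref{reban} recovers (a refinement of) the original.

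The main obstacle is the bookkeeping in the third step: Lemma~\ref{cone} only handles one set $C$ and one countable exclusion set $\{z_j\}$ at a time, so I must carefully arrange, simultaneously for all $n+2$ members of the cover, open sets $U_i$ in $X$ whose restriction-to-cosets trace is exactly a prescribed open cover of $Z$ of order $\ge k$, and argue that any refinement $\{V_i\}$ of $\{U_i\}$ in $X$ restricts (via Lemma~\ref{reban}, which forces $Y+z \subseteq V_i$ whenever $V_i$ meets the coset) to an open cover of $Z$ refining the original, hence of order $\ge k \ge 1$, hence with nonempty $(k+1)$-fold — in particular nonempty $(n+2)$-fold is forced only if $k \ge n+1$, so in fact I should let $k$ be fixed and note that the argument gives $\dim X \ge k$ for every... no: the correct conclusion is $\dim X = \infty$, which means I must show the obstruction scales with $n$ independently of $k$. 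The resolution is that the $Y$-direction contributes arbitrarily large "multiplicity": because $\theta_q$ is a cone of dimension $\dim_a Y \ge 1$ spanning $Y$, one can stack the coset-thickening trick at infinitely many scales $-ny$, producing covers where the unavoidable overlaps compound; I expect the heart of the proof to be a diagonal construction combining a Euclidean cover of $Z$ of order $1$ with an $n$-fold iteration of the ray-pushing argument from Lemmas~\ref{reban} and~\ref{cone} along a fixed direction $y = \sum y_i \in \theta_q$, so that any refinement is forced through a common point.
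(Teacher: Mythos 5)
There is a genuine gap, and it is the central mechanism of the proof rather than bookkeeping. First, you misquote Lemma~\ref{reban}: it asserts only that for each coset $Y+z$ \emph{some} member of the finite cover contains the whole coset (the proof uses a pigeonhole on the points $-\sum n y_i+z$), not that \emph{every} member meeting $Y+z$ swallows it. Consequently the map $z\mapsto\{U:Y+z\subseteq U\}$ does not induce an open cover of $Z$: the sets $\{z\in Z:Y+z\subseteq U\}$ need not be open, and in the construction that actually works they are dense sets with dense complement. Second, and more importantly, your main strategy --- transporting the Euclidean covering dimension of $Z$ up to $X$ --- can at best yield $\dim X\ge\dim_a Z<\infty$, which you notice yourself mid-argument; the proposed repair (``stacking the coset-thickening at infinitely many scales $-ny$'', a ``diagonal construction'') is a hope, not an argument, and it is not how the unbounded multiplicity actually arises.

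The device you are missing is this: since $(Z,q)$ is a Euclidean space of dimension $\ge 1$, for every $n$ one can write $Z=\bigcup_{k=1}^{n+2}Z_k$ with the $Z_k$ \emph{pairwise disjoint and each dense} in $Z$. Applying Lemma~\ref{cone} with $C=Z_k$ and exclusion set a countable dense subset $\{z_j^{(i)}\}$ of each $Z_i$, $i\neq k$, produces open sets $U_k\supseteq Y+Z_k$ such that $Y+z_j^{(i)}\not\subseteq U_k$ for $i\neq k$; these cover $X$. For any shrinking $\{V_k\}$ with $V_k\subseteq U_k$, Lemma~\ref{reban} forces each coset $Y+z_j^{(i)}$ into some $V_k$, and since only $U_i$ contains that coset, it must land in $V_i$; density of the $\{z_j^{(i)}\}$ then lets one chain the sets together inductively (a point of $\bigcap_{k\le m}V_k$ has a $q$-ball inside it, which meets the dense set $\{z_j^{(m+1)}\}$, whose coset must lie in $V_{m+1}$) to produce a point of $\bigcap_{k=1}^{n+2}V_k$. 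By Proposition~\ref{pears} this defeats $\dim X\le n$ for every $n$. The unbounded order thus comes from the existence of arbitrarily many disjoint dense subsets of $Z$ combined with coset saturation along $Y$ --- not from $\dim Z$ and not from iterating the ray argument.
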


\begin{proof}
Fix $n\in\mathbb{N}$. We will use Proposition \ref{pears} to show that $\dim X\le n$ is false. Consider an inner product on $X$ so that $X=Y\oplus{Z}$, where $Z=Y^{\perp}$. By Lemma \ref{zeta} $(Z,q)$ is normable, so it is homeomorphic to some Euclidean space. Let $Z_1,\ldots,Z_{n+2}\subseteq Z$ be pairwise disjoint dense subsets of $Z$ with $Z=\bigcup_{k=1}^{n+2}Z_i$. For each $k\in\{1,\ldots,{n+2}\}$, let $\{z_j^{(k)}:j\in\mathbb{N}\}$ be a countable dense set of $Z_k$ which is also dense in $Z$. By Lemma \ref{cone}, for every $k\in\{1,\ldots,{n+2}\}$ there is an open subset $U_k$ of $X$ such that $U_k$ contains $Y+Z_k$ and $U$ does not contain $Y+z_{j}^{(i)}$ for every $j\in\mathbb{N}$ and $i\in\{1,\ldots,{n+2}\}\setminus\{k\}$. Clearly $$X=Y+Z=Y+\bigcup_{k=1}^{n+2}Z_k=\bigcup_{k=1}^{n+2}U_k.$$

Hence $\{U_1,\ldots,U_{n+2}\}$ is an open cover of $X$. In order to apply Proposition \ref{pears}, let $\{V_1,\ldots,V_{n+2}\}$ be any open cover of $X$ such that each $V_k\subseteq U_k$ and let us show that $\bigcap_{k=1}^{n+2}V_k\neq\emptyset$. Consider the element $z_1^{(1)}$. By Lemma \ref{reban} some $V_k$ must contain $Y+z_1^{(1)}$, and so does $U_k$. Since only $U_1$ contains $Y+z_1^{(1)}$ then $k=1$. In particular $z_1^{(1)}\in V_1$. Let $m<n+2$ and suppose there is $j$ such that $z_j^{(m)}\in\bigcap_{k=1}^mV_k$. Since  $\bigcap_{k=1}^mV_k$ is $q$-open, there exists $\epsilon_m>0$ such that $$B_q(z_j^{(m)},\epsilon_m)\subseteq\bigcap_{k=1}^mV_k.$$

Since $\{z_i^{(m+1)}\}$ is dense, there is some $z_{j_0}^{(m+1)}\in B_q(z_j^{(m)},\epsilon_m)$, so $z_{j_0}^{(m+1)}\in\bigcap_{k=1}^mV_k$. Again, $Y+z_{j_0}^{(m+1)}$ is contained in some element of $\{V_1,\ldots,V_{n+2}\}$, it only can be $V_{m+1}$ by construction. In particular $z_{j_0}^{(m+1)}\in V_{m+1}$, so $$\bigcap_{k=1}^{m+1}V_k\neq\emptyset.$$

This proves $\bigcap_{k=1}^{n+2}V_k\neq\emptyset.$
\end{proof}

We have thus computed the covering dimension of all finite-dimensional asymmetric normed spaces.

\begin{theorem}\label{t:main dimension}
Let $(X,q)$ be a finite-dimensional asymmetric normed space. Let $Y=\left<{\theta_q}\right>$.
\begin{enumerate}[\rm(1)]
\item If $Y=X$, then $\dim X=0$.
\item If $0\neq Y\neq X$, then $\dim X=\infty$.
\item If $Y=0$, then $\dim X = \dim _aX$.
\end{enumerate}
\end{theorem}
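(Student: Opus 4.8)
The plan is to prove the theorem by a case analysis according to the position of $Y=\left<{\theta_q}\right>$ relative to $\{0\}$ and $X$. The three cases are exhaustive, and in each one the assertion reduces directly to a result already established in this section (or, in case (3), to the classical dimension theory of finite-dimensional normed spaces). So the theorem is essentially a synthesis: no genuinely new argument is required beyond assembling the pieces in the right order.

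For part (1), assume $Y=X$, i.e. $\theta_q$ spans $X$. Then by Lemma~\ref{geo} the set $\theta_q$ has nonempty $q^s$-interior, and Corollary~\ref{zerodim} immediately yields $\dim X=0$.

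For part (2), the hypothesis $0\neq Y\neq X$ is precisely the hypothesis of Theorem~\ref{infin}, which gives $\dim X=\infty$; there is nothing further to do. For part (3), assume $Y=0$; since $\theta_q\subseteq\left<{\theta_q}\right>=\{0\}$, this forces $\theta_q=\{0\}$, so by Proposition~\ref{haus}(1) (equivalently, property (A1)) the space $(X,q)$ is $T_1$. As $X$ is finite-dimensional, Proposition~\ref{norm} applies and $(X,q)$ is normable by $q^s$; that is, the topology of $(X,q)$ coincides with that of the finite-dimensional normed space $(X,q^s)$. Since the covering dimension of a finite-dimensional normed space equals its algebraic dimension (recalled at the beginning of the section), we conclude $\dim X=\dim(X,q^s)=\dim_aX$.

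The point to emphasize is that there is no real obstacle at this level: each of the three cases is a one-line deduction. The genuine difficulty of the theorem is concentrated entirely in Theorem~\ref{infin}, and hence in Lemma~\ref{cone}, where for each $n$ one must construct an explicit finite open cover of $X$ admitting no open shrinking with empty intersection; that construction, which relies on the orthogonal splitting $X=Y\oplus Y^\perp$, the equivalence of $q$ and $q^-$ on $Z$, and careful bookkeeping of the radii $r_{n,c}$, together with Lemma~\ref{reban} and the characterization in Proposition~\ref{pears}, is where all the work lies. Once that is in hand, the present theorem is immediate.
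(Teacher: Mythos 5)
Your proposal is correct and follows exactly the same route as the paper: case (1) via Lemma~\ref{geo} and Corollary~\ref{zerodim}, case (2) by direct appeal to Theorem~\ref{infin}, and case (3) by noting $\theta_q=\{0\}$ forces $T_1$, hence normability by Proposition~\ref{norm} and reduction to the Euclidean case. Your closing remark that the real work lives in Theorem~\ref{infin} and Lemma~\ref{cone} is also an accurate reading of the section's structure.
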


\begin{proof}
If $Y=X$, by Lemma \ref{geo}, $\theta_q$ has nonempty $q^s$-interior. Hence, by Corollary \ref{zerodim}, $\dim X=0$.

If $0\neq Y\neq X$, then $\dim X=\infty$ by Theorem \ref{infin}.

If $Y=0$, then $\theta_q=\{0\}$. Hence $X$ is $T_1$, so it is normable and homeomorphic to the Euclidean space $\mathbb{R}^n$, where $n=\dim_a X$. Since  $\mathbb{R}^n$ has covering dimension $n$ ( \cite[Chapter 3, Theorem 2.7]{pears}) so does $X$.
\end{proof}

\section{Separation axioms on asymmetric normed spaces}

By Proposition \ref{norm}, every  finite-dimensional asymmetric normed space satisfying the axiom $T_1$ is normable, which in particular means that, in this case, $T_1$ implies $T_2$. In this section we explore the relation between other separation axioms that occur in asymmetric normed spaces.

Let us begin by remembering that, if $(X,q)$ is an asymmetric normed space, then $X$ is always a $T_0$ space.

In \cite{arenas}, F. G. Arenas, J. Dontchev and M. Ganster introduced the $T_{\frac{1}{4}}$ separation axiom as follows.

\begin{definition}
Let $X$ be a topological space. We say that $X$ is a $T_{\frac{1}{4}}$ space if for any $x\in X$ and any finite collection of points $x_1,\ldots,x_n\in X$, with $x\neq x_i$ for all $i=1,\ldots,n$, there exists an open set $U\subseteq X$ such that $x\in U$ and $x_1,\ldots,x_n\notin U$, or $x\notin U$ and $x_1,\ldots,x_n\in U$.
\end{definition}

It is easy to see that $T_1$ implies $T_{\frac{1}{4}}$, $T_{\frac{1}{4}}$ implies $T_0$, and that these axioms are non-equivalent. As the notation suggests, the $T_{\frac{1}{4}}$ axiom can be thought as the weakest separation axiom that implies $T_0$. It is natural to ask if there exists a $T_{\frac{1}{4}}$ asymmetric normed space that is not $T_1$. We give a negative answer to this.

\begin{proposition}
Every $T_{\frac{1}{4}}$ asymmetric normed space $(X,q)$ is $T_1$.
\end{proposition}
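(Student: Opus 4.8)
The plan is to argue by contradiction using the characterization of the $T_1$ axiom given in Proposition~\ref{haus}. Suppose $(X,q)$ is $T_{\frac{1}{4}}$ but not $T_1$. Then $\theta_q\neq\{0\}$, so I would fix some $x_0\in\theta_q$ with $x_0\neq 0$; observe that necessarily $q(-x_0)>0$, since otherwise $q(x_0)=q(-x_0)=0$ would force $x_0=0$.

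The key elementary observation is that whenever $U$ is a $q$-open set and $y\in U$, the entire ray $\{y+tx_0:t\ge 0\}$ lies in $U$: choose $\varepsilon>0$ with $B_q(y,\varepsilon)\subseteq U$ and note that $q\bigl((y+tx_0)-y\bigr)=t\,q(x_0)=0<\varepsilon$ for every $t\ge 0$. This is just the trivial converse of the phenomenon recorded right after Lemma~\ref{rayo}, and it is the only structural input needed.

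With this in hand, I would apply the $T_{\frac{1}{4}}$ property to the point $x=0$ together with the finite collection $\{x_0,-x_0\}$, both of whose members differ from $0$. This produces a $q$-open set $U$ for which either (i) $0\in U$ and $x_0,-x_0\notin U$, or (ii) $0\notin U$ and $x_0,-x_0\in U$. In case (i), applying the ray observation with $y=0$ gives $x_0=0+1\cdot x_0\in U$, a contradiction. In case (ii), applying it with $y=-x_0$ gives $0=-x_0+1\cdot x_0\in U$, again a contradiction. Hence $X$ must be $T_1$.

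The proof is short, and the only place that calls for a little care is the choice of test points: a single pair $\{0,x_0\}$ does \emph{not} suffice, because $q(-x_0)>0$ allows one to separate $x_0$ from $0$ by a sufficiently small open ball centered at $x_0$. Including $-x_0$ in the collection is precisely what rules out both alternatives offered by the $T_{\frac{1}{4}}$ axiom, so this is the step I expect to be the (mild) crux.
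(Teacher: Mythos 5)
Your proof is correct and is essentially the paper's argument: the paper tests the $T_{\frac{1}{4}}$ condition on the configuration $\{0,x,2x\}$ with distinguished point $x$, which is just the translate by $x$ of your configuration $\{-x_0,0,x_0\}$ with distinguished point $0$, and both arguments reduce to the same observation that an open set containing a point also contains that point plus any element of $\theta_q$. The auxiliary remark that $q(-x_0)>0$ is true but not needed.
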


\begin{proof}
Let us suppose that $(X,q)$ is not $T_1$ and let us show that $X$ cannot be $T_{\frac{1}{4}}$. Then, pick $x\in\theta_q$ with $x\neq 0$. Clearly, any basic neighborhood of $0$, say $B_q(0,\epsilon)$, contains $x$ because $q(x)=0<\epsilon$. On the other hand, any neighborhood of $x$, say $B_q(x,\delta)$, contains $2x$ because $q(2x-x)=0$. Hence, there is no $q$-open set $U$ such that $2x,0\in U$ and $x\notin U$, or $2x,0\notin U$ and $x\in U$.
\end{proof}

T. Banakh and A. Ravsky proved in  \cite{ravs} that 
every $T_3$ paratopological group is $T_{3\frac{1}{2}}$.
This result in combination
with Propositions \ref{prop bola cerrada regular} and Corollary \ref{corol t3 implica t2}, yields the following.

\begin{corollary}
Let $(X,q)$ be an asymmetric normed space. If the closed ball $B_q[0,1]$ is a $q$-closed set in $X$, then $X$ is $T_{3\frac{1}{2}}$ and $T_2$. Hence, $X$ is completely regular. 
\end{corollary}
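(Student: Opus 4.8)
The plan is to chain together three facts already available in the excerpt. First, by Proposition~\ref{prop bola cerrada regular}, the hypothesis that $B_q[0,1]$ is $q$-closed immediately gives that $(X,q)$ satisfies the axiom $T_3$. Since every asymmetric normed space is $T_0$, Corollary~\ref{corol t3 implica t2} (equivalently Proposition~\ref{prop cregular}) then yields that $X$ is $T_2$. So the first paragraph of the proof is just: ``$B_q[0,1]$ $q$-closed $\Rightarrow$ $T_3$ $\Rightarrow$ ($T_0+T_3$) $\Rightarrow$ $T_2$.''

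Second, I would invoke the theorem of Banakh and Ravsky from \cite{ravs}, which states that every $T_3$ paratopological group is $T_{3\frac12}$. The only thing to check here is that $(X,q)$ is indeed a paratopological group, which is recalled in the Introduction and Section~2: the additive group $(X,+)$ with the $q$-topology has continuous addition. Hence $(X,+,\tau_q)$ is a paratopological group that is $T_3$, and therefore it is $T_{3\frac12}$, i.e.\ completely regular in the (non-$T_1$) sense of having enough continuous real functions to separate points from closed sets.

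Finally, combining ``$X$ is $T_{3\frac12}$'' with ``$X$ is $T_1$''—wait, here I should be careful: the statement I want is ``$X$ is completely regular'' in the paper's convention, which means $T_1$ together with $T_{3\frac12}$. But we only have $T_2$, not $T_1$, in general. Re-reading the corollary, what it actually asserts is that $X$ is $T_{3\frac12}$ \emph{and} $T_2$, and then says ``hence $X$ is completely regular''—so in context the intended reading is simply that $X$ enjoys both separation properties, and ``completely regular'' here is shorthand for the conjunction $T_2 \wedge T_{3\frac12}$ (note $T_2$ is strictly stronger than $T_1$, so this is consistent with calling it completely regular). Thus the proof concludes: since $X$ is $T_{3\frac12}$ and, being $T_2$, in particular $T_1$, the space $X$ is completely regular. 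I expect no real obstacle in this proof; the only subtlety is bookkeeping the paper's idiosyncratic naming of the separation axioms and making sure the application of \cite{ravs} is legitimate, i.e.\ that we have verified the paratopological-group structure before feeding it into the Banakh--Ravsky theorem. Concretely I would write:

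\begin{proof}
By Proposition~\ref{prop bola cerrada regular}, the space $(X,q)$ is $T_3$. Since $(X,q)$ is always $T_0$, Corollary~\ref{corol t3 implica t2} gives that $X$ is $T_2$; in particular $X$ is $T_1$. On the other hand, $(X,+)$ endowed with the topology generated by $q$ is a paratopological group, so by the Banakh--Ravsky theorem \cite{ravs} every $T_3$ paratopological group is $T_{3\frac12}$, whence $X$ is $T_{3\frac12}$. Being simultaneously $T_1$ and $T_{3\frac12}$, the space $X$ is completely regular.
\end{proof}
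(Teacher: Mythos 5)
Your proof is correct and follows exactly the route the paper intends: Proposition~\ref{prop bola cerrada regular} gives $T_3$, Corollary~\ref{corol t3 implica t2} gives $T_2$ (hence $T_1$), and the Banakh--Ravsky theorem applied to the paratopological group $(X,+)$ gives $T_{3\frac12}$, so $X$ is completely regular in the paper's convention. The mid-proof hesitation about the meaning of ``completely regular'' resolves itself correctly, since $T_2$ implies $T_1$; the final written argument needs no changes.
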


\begin{proposition}\label{p:regular bounded}
If $(X,q)$ is a regular space, then  $\overline{B_q[0,1]}$ is $q$-bounded. Namely, there exists $k>0$ such that $\overline{B_q[0,1]}\subset B_q(0, k)$.
\end{proposition}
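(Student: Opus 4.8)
The statement to establish is that if $(X,q)$ is a regular space (i.e.\ $T_1$ and $T_3$), then $\overline{B_q[0,1]}$ is $q$-bounded, meaning $\overline{B_q[0,1]}\subseteq B_q(0,k)$ for some $k>0$. The plan is to argue by contradiction, exploiting the $T_3$ axiom at the point $0$ together with the homogeneity of the space under dilations. First I would invoke the $T_3$ axiom: since $\{0\}$ is $q$-closed (because $X$ is $T_1$) and $B_q(0,1)$ is an open neighbourhood of $0$, regularity gives a $q$-open set $V$ with $0\in V\subseteq\overline{V}\subseteq B_q(0,1)$; more usefully, I would apply $T_3$ to separate $0$ from the $q$-closed complement of $B_q(0,1)$ to extract, after shrinking, some $\varepsilon>0$ with $\overline{B_q(0,\varepsilon)}\subseteq B_q(0,1)$. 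Rescaling by $1/\varepsilon$ (a homeomorphism, by positive homogeneity of $q$) yields $\overline{B_q(0,1)}\subseteq B_q(0,1/\varepsilon)$, and then, noting $B_q[0,1]\subseteq B_q(0,2)$ and rescaling again, $\overline{B_q[0,1]}\subseteq \overline{B_q(0,2)}=2\,\overline{B_q(0,1)}\subseteq B_q(0,2/\varepsilon)$, so $k=2/\varepsilon$ works.

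The delicate point is extracting the single radius $\varepsilon$ with $\overline{B_q(0,\varepsilon)}\subseteq B_q(0,1)$ from the $T_3$ axiom. By definition $T_3$ says: for $0$ and the $q$-closed set $F=X\setminus B_q(0,1)$ with $0\notin F$, there are disjoint $q$-open sets $W_1\ni 0$ and $W_2\supseteq F$. Then $W_1\cap W_2=\emptyset$ forces $\overline{W_1}\subseteq X\setminus W_2\subseteq X\setminus F=B_q(0,1)$, and since $W_1$ is a $q$-neighbourhood of $0$ there is $\varepsilon>0$ with $B_q(0,\varepsilon)\subseteq W_1$, hence $\overline{B_q(0,\varepsilon)}\subseteq\overline{W_1}\subseteq B_q(0,1)$. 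This is exactly the standard equivalence between $T_3$ and "closed neighbourhoods form a base", already used implicitly in Proposition~\ref{prop bola cerrada regular}, so it is routine; I would either cite that proposition's proof or reprove the inclusion in one line.

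The remaining step is purely the scaling bookkeeping. I would record that for any $\lambda>0$ the map $x\mapsto\lambda x$ is a $q$-homeomorphism sending $B_q(0,r)$ to $B_q(0,\lambda r)$ and commuting with $q$-closure, so $\overline{B_q(0,\lambda r)}=\lambda\,\overline{B_q(0,r)}$. Applying this with $\lambda=1/\varepsilon$ to $\overline{B_q(0,\varepsilon)}\subseteq B_q(0,1)$ gives $\overline{B_q(0,1)}\subseteq B_q(0,1/\varepsilon)$. Since $\overline{B_q[0,1]}\subseteq\overline{B_q(0,2)}=2\,\overline{B_q(0,1)}\subseteq 2\,B_q(0,1/\varepsilon)=B_q(0,2/\varepsilon)$, the claim follows with $k=2/\varepsilon$. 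I do not anticipate a genuine obstacle here; the only thing to be careful about is that the $T_1$ hypothesis is what makes $\{0\}$ (and $F$) closed, so that $T_3$ is applicable to the pair $(0,F)$ — without $T_1$ the statement can fail, consistent with the counterexamples the paper gives elsewhere for dropping $T_1$.
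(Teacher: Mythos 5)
Your proof is correct and takes essentially the same route as the paper's: regularity yields some $\varepsilon>0$ with $\overline{B_q(0,\varepsilon)}\subseteq B_q(0,1)$, and rescaling via the dilation homeomorphisms gives the bound (the paper scales $B_q[0,r/2]\subseteq B_q(0,r)$ rather than $B_q[0,1]\subseteq B_q(0,2)$, an immaterial difference). One minor remark: $F=X\setminus B_q(0,1)$ is $q$-closed simply because $B_q(0,1)$ is $q$-open, so only the $T_3$ part of regularity is actually used here, not the $T_1$ part as your closing aside suggests.
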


\begin{proof}
If $X$ is regular, there exists $r>0$ such that $\overline{B_q(0,r)}\subseteq B_q(0,1)$. Therefore $\overline{B_q[0,r/2]}\subseteq B_q(0,1)$. Since the function $x\mapsto (2/r)x$ is a homeomorphism on $(X,q)$, we conclude that $\overline{B_q[0,1]}\subseteq B_q(0,2/r)$.
\end{proof}

\begin{example}\label{cec}
Consider the linear space $X=C_0[0,1]$ of all continuous functions $f:[0,1]\to\mathbb{R}$ such that $\int_{0}^{1}f(t)dt=0$, with the asymmetric norm $q(f)=\max\{f(t):t\in [0,1]\}$. Then $B_q[0,1]$ is $q$-closed in $X$, and therefore $X$ is completely regular. In particular it is Hausdorff.
\end{example}

\begin{proof}
Let $B=B_q[0,1]$. Suppose that $(f_n)\subseteq B$ is a sequence that converges to $f$, for some $f\in X$. We will show that $q(f)\le 1$. Suppose on the contrary that $q(f)>1$, then there exists $t_0\in [0,1]$ such that $f(t_0)>1$. For all $n\in\mathbb{N}$, we have $$0=\int_{0}^{1}(f_n(t)-f(t))dt=\int_{0}^{1}(f_n(t)-f(t))^+dt-\int_{0}^{1}(f_n(t)-f(t))^-dt,$$ where the positive and negative parts of a real-valued function $g$ are defined as $g^+=\max\{g,0\}$ and $g^-=\max\{-g,0\}.$
Since $q(f_n-f)\to 0$, then $\int_{0}^{1}(f_n(t)-f(t))^+dt$ converges to zero. Hence $\int_{0}^{1}(f_n(t)-f(t))^-dt$ must converge to zero too. Since $$f(t_0)>1\ge q(f_n)\ge f_n(t)$$ for all $n$ and $t\in [0,1]$, there is a closed subinterval $J\subseteq [0,1]$ containing the point $t_0$ in its interior, where $f(t)-f_n(t)>\frac{1}{2}(f(t_0)-1)>0$ for all $n$ and $t\in J$. Thus $$\int_{0}^{1}(f_n(t)-f(t))^-dt=\int_{0}^{1}\max\{f(t)-f_n(t),0\}dt\ge \int _J \frac{1}{2}(f(t_0)-1)dt>0.$$
This contradicts the fact that $\int_{0}^{1}(f_n(t)-f(t))^+dt$ converges to zero.
\end{proof}

By Lemma~\ref{acota}, if $B_q[0,1]$ is $q^s$-bounded then $(X,q)$  is normable, hence it is $T_2$.
It is well known that the asymmetric normed space described in Example~\ref{cec} is not a topological vector space and hence it is not normable (see, e.g., \cite[Example 1.1.41]{cobzas}). Therefore, its  closed unit ball cannot be $q^s$-bounded.   This shows that in the infinite-dimensional case, not every Hausdorff (or even, completely regular) asymmetric normed space is normable.

We will show now that if the set $\theta_q$ has nonempty $q^s$-interior then the space is $T_4$. Although it seems that, under this condition there cannot be disjoint nonempty closed subsets, it is worth observing that this can be deduced from our results about the covering dimension of asymmetric normed spaces. Indeed, it is well known that any topological space $X$ such that $\dim X=0$ is a $T_4$-space (see \cite[Chapter 3]{pears}). Then, by  Corollary \ref{zerodim} we can infer the following.

\begin{corollary}\label{tcuat}
Let $(X,q)$ be an asymmetric normed space such that $\theta_q$ has nonempty $q^s$-interior. Then $(X,q)$ is a $T_4$-space.
\end{corollary}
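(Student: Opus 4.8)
The plan is to reduce everything to the dimension-theoretic machinery already developed in this section. First I would invoke Corollary~\ref{zerodim}: the hypothesis that $\theta_q$ has nonempty $q^s$-interior forces $\dim X = 0$. The task then becomes the standard implication (worth spelling out for completeness, since our covering dimension is defined without separation assumptions) that a zero-dimensional space satisfies the $T_4$ axiom, i.e.\ that any two disjoint $q$-closed subsets can be enclosed in disjoint $q$-open sets.

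To carry this out, given disjoint $q$-closed sets $A, B \subseteq X$, I would consider the two-element open cover $\{X \setminus A,\ X \setminus B\}$ — which is genuinely a cover precisely because $A \cap B = \emptyset$ — and apply Proposition~\ref{pears} with $n = 0$. This produces an open cover $\{V_1, V_2\}$ with $V_1 \subseteq X \setminus A$, $V_2 \subseteq X \setminus B$ and $V_1 \cap V_2 = \emptyset$. Since $V_1 \cup V_2 = X$ and the two sets are disjoint, each is the complement of the other, so both are clopen; moreover $A \cap V_1 = \emptyset$ yields $A \subseteq V_2$, and $B \cap V_2 = \emptyset$ yields $B \subseteq V_1$. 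Thus $V_2$ and $V_1$ are disjoint open sets separating $A$ and $B$, which is exactly the $T_4$ axiom in the convention adopted here.

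There is essentially no obstacle in this argument: the only points requiring a little care are the bookkeeping ones — checking that $\{X \setminus A, X \setminus B\}$ really is a cover (this is where disjointness of $A$ and $B$ enters) and correctly tracking which of the two refined sets contains $A$ and which contains $B$. Alternatively, one may simply cite \cite[Chapter~3]{pears} for the general fact that $\dim X = 0$ implies $T_4$ and conclude the corollary in a single line from Corollary~\ref{zerodim}.
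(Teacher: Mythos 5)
Your proposal is correct and follows exactly the paper's route: apply Corollary~\ref{zerodim} to get $\dim X=0$ and then use the standard fact that zero-dimensional spaces satisfy the $T_4$ axiom, which the paper simply cites from \cite[Chapter 3]{pears} while you spell it out via Proposition~\ref{pears} with $n=0$. The explicit refinement argument you give (and the bookkeeping of which $V_i$ contains which closed set) is accurate, so there is nothing to correct.
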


In the following example, we show an asymmetric normed space such that it is not a $T_3$-space nor a $T_4$-space.

\begin{example}
Let $X=\mathbb{R}^2$ and consider the asymmetric norm $q(x,y)=\max\{x^+,|y|\}$. Then $X$ is not a $T_3$-space nor a $T_4$-space.
\end{example}
\begin{proof}
Let $F$ be the closure of the set $\{(x,1/x):x>0\}$ and 
$C=\{(x,0):x\in\mathbb{R}\}$. It is easy to see that $F\cap C=\emptyset$ and that $C$ is a $q$-closed subset.  Consider an open neighborhood $B_q((0,0),\epsilon)$ of $(0,0)$ and $A$ an open set containing $F$. Clearly $(0,\epsilon/2)\in B_q((0,0),\epsilon)$. Also $(2/\epsilon , \epsilon/2)\in F$, so there is $\delta >0$ such that $$B_q((2/\epsilon,\epsilon/2),\delta)\subseteq A.$$ 

Since $$q((0,\epsilon/2)-(2/\epsilon ,\epsilon/2))=0<\delta$$ we have $(0,\epsilon /2)\in A$. Hence any two open sets containing $(0,0)$ and $F$, respectively, cannot be disjoint. This shows that $X$ is not a $T_3$-space. The same reasoning shows there cannot be two open disjoint sets containing $F$ and $C$, respectively.
\end{proof}

In the literature about asymmetric normed spaces, every $T_1$ infinite-dimensional asymmetric normed space that we find has closed unit ball, and therefore it is completely regular (and Hausdorff). From this, we find interesting the following examples.

\begin{example}\label{e: taras t1}
There is a $T_1$ infinite-dimensional asymmetric normed space which is not Hausdorff.
\end{example}

\begin{proof}

Let $\{e_n\}_{n=1}^{\infty}$ be the standard orthonormal Schauder basis of $\ell_2$. Consider the set $$S=\{-e_n,ne_n,(n+1)(e_{n+1}+e_1)\}_{n=1}^{\infty}.$$

Let $B$ be the convex hull of $S$, and let $X$ be the linear span of $B$. Define  the asymmetric norm $q$ on $X$ as the Minkowski gauge functional on $B$. 

First we will prove that $(X,q)$ is not Hausdorff. For every $n\in\mathbb{N}$, the  point $(n+1)(e_{n+1}+e_1)\in S$  and then $q((n+1)(e_{n+1}+e_1))\le 1$ . Hence $$q(e_{n+1}+e_1)\le 1/(n+1), $$  and therefore the sequence $(e_{n+1}+e_1)$ converges to $0$ in $(X,q)$. On the other hand, since $(n+1)e_{n+1}\in S$ for all $n\in\mathbb{N}$, then 
$$q\big((e_{n+1}+e_1)-e_1\big)=q(e_{n-1})=\frac{q((n+1)e_{n+1})}{n+1} \longrightarrow 0.$$ 
From this we infer that the sequence $(e_{n+1}+e_1)$ converges to both $0$ and $e_1$, and then $(X,q)$ is not a Hausdorff space.

To prove that $(X,q)$ is $T_1$ it suffices to show that  $B$ does not contain any ray $\{tx:t\ge 0\}$ with $x\in B\setminus\{0\}$.

Observe that every point $x=(x_n)\in B$,  can be written  as the convex sum
\begin{equation}\label{e:descripcion x}
x=-\sum_{n=1}^{\infty} s_ne_{n}+\sum_{n=1}^{\infty} t_nne_{n}+\sum_{n=1}^{\infty} r_n(n+1)(e_{n+1}+e_1),
\end{equation}
where $\{s_n, t_n, r_n\}_{n\in\mathbb N}\subset [0,1]$, $\sum\limits_{n=1}^\infty (s_n+t_n+r_n)=1$, and all but finitely many $s_n,t_n,r_n$ are zero. Hence, the coordinates of the vector $x$ can be written as
$$x_1=-s_1+t_1+\sum_{n=1}^{\infty}(n+1)r_n,$$
$$x_k=-s_k+kt_k+kr_{k-1}, \quad \text{if } k\ge 2.$$

From this representation we conclude that if $x=(x_n)\in B$, then
$-1\leq x_n$, for every $n\in\mathbb N$. Furthermore, if $k\geq 2$ then $x_k\leq k$. 
Let us assume that  $x\in B\setminus \{0\}$ is such that $\{tx:t\ge 0\}\subset B$. Then   $x_1>0$ and $x_k=0$ for every $k\geq 2$. Since the whole ray $\{tx:t\ge 0\}$ is contained in $B$, we can assume without loss of generality that $x_1>1$. 
Thus, if $n\geq 2$, we have that 
$$0=x_n=-s_n+nt_n+nr_{n-1}\geq-s_n+ nr_{n-1}.$$
Then $s_n\geq  nr_{n-1}$ and therefore:

$$1<x_1 = -s_1+t_1+\sum_{n=1}^{\infty}(n+1)r_n \le -s_1+t_1+\sum_{n=2}^{\infty}s_n \le 1.$$
This contradiction concludes the proof.

\end{proof}

\begin{example}\label{e: taras regular}
There exists a Hausdorff asymmetric normed space $(X,q)$ such that its closed unit ball $B_q[0,1]$ is not $q$-closed and  $X$ is not regular. 
\end{example}

\begin{proof}
Let $(X,\left\|{\cdot}\right\|)$ be a normed space with a discontinuous linear functional $f:X\to\mathbb{R}$.  The map $q:X\to \mathbb R$ defined by $q(x)=\max\{\left\|{x}\right\|,f(x)\}$ is an asymmetric norm on $X$. Consider $B_1 = B_q[0,1]$ and $B_2=B_{\left\|{\cdot}\right\|}[0,1]$. We will show that $\overline{B_1}=B_2$, where $\overline{B_1}$ is the $q$-closure of $B_1$.

First, note that  $\left\|{x}\right\|\le q(x)$ for all $x\in X$.  Then $B_1\subseteq B_2$ and the topology of the norm $\left\|{\cdot}\right\|$ is contained in the topology of the asymmetric norm $q$. This implies that $(X,q)$ is Hausdorff and $B_2$ is closed in $(X,q)$.  So, $\overline{B_1}\subseteq B_2$.

In order to prove the other contention, consider any point $x\in B_2$ and assume without loss of generality  that $f(x)\ge 0$ (the other case is analogous).
 Since $f$ is not continuous, there exists a sequence $(x_n)\subseteq B_2$ such that $(f(x_n))$ diverges to $+\infty$. 
 For every $n\in\mathbb{N}$, let $y_n$ be defined by $$y_n=-\frac{x_n}{f(x_n)}f(x)+x.$$
It is clear that $\|y_n- x\|\to 0$. 
On the other hand $f(y_n-x)=-f(x)\le 0$ for all $n\in\mathbb{N}$, so $q(y_n-x)=\left\|{y_n-x}\right\|\to 0$.

If $\left\|{x}\right\|<1$ we can suppose that $(y_n)\subseteq B_2$. Also, $f(y_n)=-f(x)+f(x)=0$. This shows that $q(y_n)=\|y_n\|\le 1$, so $(y_n)\subseteq B_1$ and therefore $x\in\overline{B_1}$. 

If $\left\|{x}\right\|=1$. Let $z_n=(1-1/n)x$ for every $n\in\mathbb{N}$. Hence $\left\|{z_n}\right\|=1-1/n<1$, so this sequence is contained in $\overline{B_1}$, by the previous case. Since $q(z_n-x)=(1/n)q(-x)\to 0$, then $x\in \overline{\overline{B_1}}=\overline{B_1}$, as desired. Thus $B_2=\overline{B_1}$.

Finally observe that $f(B_2)$ is not bounded in $\mathbb{R}$, and therefore the set $B_2=\overline{B_1}$ is not $q$-bounded. This, in combination with Proposition~\ref{p:regular bounded} implies that $(X,q)$ is not regular and therefore $B_1$ is not $q$-closed neither. 
\end{proof}

\subsection{Final Question}

For asymmetric normed spaces, the relations between axioms $T_1$, $T_2$, $T_3$, $T_{3\frac{1}{2}}$ and the property of having the closed unit ball $B_q[0,1]$ $q$-closed are summarized in the following diagram

{ 
\[ \begin{tikzcd}
B_q[0,1] \text{ is closed} \arrow[Rightarrow]{r} & T_3 \arrow[Leftrightarrow]{r} \arrow[Rightarrow]{d} & T_{3\frac{1}{2}} \\
 & T_2 \arrow[Rightarrow]{d} \\
 & T_1 
\end{tikzcd}
\]
}

As we have shown in Examples~\ref{e: taras t1} and \ref{e: taras regular}, implications $T_2\Rightarrow T_1$ and $T_3\Rightarrow T_2$ cannot be reversed. 

On the other hand, the property of having the closed unit ball $B_q[0,1]$ $q$-closed is equivalent to the continuity of the norm $q:(X, q)\to\mathbb (\mathbb R, |\cdot|)$. Indeed, for every $a\in\mathbb R$, the set $q^{-1}\left((-\infty, a)\right)=B_q(0,a)$ is always open and therefore $q$ is always upper semi-continuous. On the other hand $B_q[0,1]$ is $q$-closed if and only if  $B_q[0,a]$ is closed for every $a\in\mathbb R$. This happens if and only if the complement of  $B_q[0,a]$ is $q$-open, namely, iff $q^{-1}\left((a,\infty)\right)$ is a $q$-open set. Thus, closedness of  $B_q[0,1]$ is equivalent to the lower semi-continuity of $q$, which in turn is equivalent to the continuity of $q$ (c.f. \cite[Proposition 1.1.8 (5)]{cobzas}).

From this situation we are particularly interested in the following natural question:

\begin{question} 
Does every regular asymmetric normed space admit an equivalent asymmetric norm which is continuous?
\end{question}

\begin{center}
ACKNOWLEDGMENTS
\end{center}
\textit{The authors  want to express their gratitude to Taras Banakh, for suggesting  Examples~\ref{e: taras t1} and \ref{e: taras regular}.
The authors also want to thank the anonymous referees for the careful reading  and the suggestions that helped  improve the final version of this paper.  }

\vspace{2cm}


\end{document}